\documentclass{amsart}
\usepackage{enumerate, amsmath, color, amsfonts, amsthm, amssymb}

\usepackage[svgnames]{xcolor}
\usepackage{tikz}
\usepackage{hyperref}
 
\title[Computation of Extension Spaces Using Planar Curves]
{Computation of extension spaces for the path algebra of type $\tilde A(n-1,1)$ using planar curves} 

\author[H. A. Werth]{Heather Anna Werth}
\address{Department of Mathematics, University of Alabama,
Tuscaloosa, AL 35487, U.S.A.}
\email{hawerth@crimson.ua.edu}

\setlength{\textwidth}{460pt} \setlength{\hoffset}{-45pt}

\numberwithin{equation}{section}
\theoremstyle{plain}
\newtheorem{Thm}[equation]{Theorem}
\newtheorem{Prop}[equation]{Proposition}

\newtheorem{Lem}[equation]{Lemma}

\theoremstyle{definition}
\newtheorem{Def}[equation]{Definition}
\newtheorem{Exa}[equation]{Example}
\newtheorem{Rmk}[equation]{Remark}

\begin{document}

\begin{abstract} $Q$ is a quiver of type $\tilde A(n-1,1)$ if its graph is of affine type $\tilde A_{n-1}$ and if its arrows have a certain orientation. We develop a bijection between the set of indecomposable $kQ$-modules whose dimension vectors are positive real roots of the root system associated to $Q$ and a certain set of planar curves. We prove that the number of self-intersections of the curve which corresponds to the module $M$ is equal to the dimension of $\text{Ext}^1_{kQ}(M,M)$. We also prove that, for many pairs of modules $(M,N)$, the number of intersections between the corresponding two curves is equal to the dimension of $\text{Ext}^1_C (M,N)$, where $C$ is the cluster category of $kQ$-mod.
\end{abstract}

\maketitle

\section{Introduction} 

\noindent
Fix $k$ an algebraically closed field and $Q$ a finite, acyclic, connected quiver. The category of modules over the path algebra $kQ$ is equivalent to the category of representations of the quiver $Q$ over $k$. This category is Krull-Schmidt, so classifying the indecomposable objects is of high importance. A fundamental step in this direction was taken by V. Kac (\cite{Kac1,Kac2}; see also \cite{Kac}), when he associated to the undirected graph of $Q$ a certain root system, and showed that the dimension vectors of indecomposable representations are in bijection with the positive roots of the root system. A positive root is called real if it can be obtained through action of the Weyl group on a simple root of the system. Kac proved that if the dimension vector of an indecomposable module is a positive real root, then it is the unique module (up to isomorphism) with that dimension vector. 

The dimension vectors of $kQ$-modules are therefore well understood in terms of the root system. Their homological properties, however, are dependent upon the orientation of $Q$, and so one must look beyond the root system to study them. One approach to computing the homological invariants is to interpret $kQ$-modules as familiar geometric objects, in such a way that homological data are clearly expressed by geometric or combinatorial properties. This method of geometrically modelling important categories has often been used successfully in the representation theory of algebras -- for a small selection of references, see \cite{Schroll, labardini, He2020AGM, Schiffler1, Schiffler2}. In this paper, we are interested in the special situation in which indecomposable $kQ$-modules are interpreted as curves whose intersections count the dimensions of extension spaces.

In \cite{FST}, a triangulation of a marked surface $(\mathcal{S},\mathcal{M})$ was associated to a quiver $Q$ of finite type $A$ or $D$ or of affine type $\tilde A$ or $\tilde D$. When $(\mathcal{S},\mathcal{M})$ is an unpunctured surface (this occurs for types $A$ and $\tilde A$), the authors of \cite{article} construct a potential on the quiver $Q$. From the quiver with potential one can construct a Jacobian algebra (\cite{article}, Sections 2.1 and 2.2) and by Proposition 4.2 in \cite{article}, curves in $(\mathcal{S},\mathcal{M})$ correspond to indecomposable modules over that algebra. Furthermore, using the Jacobian algebra, one can associate to $(\mathcal{S},\mathcal{M})$ a generalized cluster category $C(\mathcal{S},\mathcal{M})$ (see, e.g., the Introduction of \cite{BrustleZhang2011}), whose set of indecomposable objects includes the indecomposable modules over the Jacobian algebra. Therefore, a curve in $(\mathcal{S},\mathcal{M})$ corresponds to an indecomposable object in $C(\mathcal{S},\mathcal{M})$. Now, Theorem 3.4 in \cite{zhang2012cotorsion} states that the minimal number of intersections between two curves (up to homotopy) in $(\mathcal{S},\mathcal{M})$ is equal to the dimension of the Ext space of the corresponding objects in $C(\mathcal{S},\mathcal{M})$. For completeness, we mention that \cite{labardini_potential} defines a quiver with potential for any surface (punctured or unpunctured), and Theorem 5.5 in \cite{qiu_zhou_2017} is the analogous intersection result for punctured $(\mathcal{S},\mathcal{M})$. 

When $Q$ is acyclic (without potential), the Jacobian algebra is just $kQ$  and $C(\mathcal{S},\mathcal{M})$ is just $C$, the cluster category of $kQ$-mod (see, e.g., \cite{Amiot}, Corollary 3.12, or \cite{Keller}, Section 6.11). Thus, in this case, curves in $(\mathcal{S},\mathcal{M})$ correspond to indecomposable modules over $kQ$ and their intersections count dimensions of extension spaces between those modules in $C$. This fact will be crucial to proving the main result of this paper.

In this paper, we will use the well-known surface $(\mathcal{S},\mathcal{M})$ to increase understanding of a newer geometric interpretation of $kQ$-mod which comes from \cite{lee}. In that paper, the authors defined a correspondence between (dimension vectors of) indecomposable $kQ$-modules and curves on a Riemann surface $\Gamma$. It should be noted that \cite{lee} defined curves only for the positive real roots. However, the formulation of \cite{lee} applies to all acyclic quivers, whereas the surface in \cite{FST} is only defined for quivers of types $A$, $D$, $\tilde A$, and $\tilde D$. 

By \cite{lee}, a positive real root $\alpha$ of the root system of an acyclic quiver $Q$ corresponds to a family of plane curves $\mathcal{C}(\alpha)$. The authors conjectured that $\mathcal{C}(\alpha)$ contains a curve without self-intersections if and only if $M_\alpha$, the unique module whose dimension vector is $\alpha$, has a vanishing self-extension space in $kQ$-mod\footnote{If $M_\alpha$ satisfies this condition, $\alpha$ is called a real Schur root of the root system of $Q$.} (\cite{lee}, Conjecture 1.3).  Multiple authors have investigated this conjecture. It is proved in \cite{lee} that for $Q$ a rank 3, 2-complete\footnote{A quiver is called ``2-complete" if it is acyclic and has at least 2 arrows between every pair of vertices.} quiver, and $\alpha$ a real Schur root of the corresponding root system, $\mathcal{C}(\alpha)$ consists of one curve which has no self-intersections. The authors of \cite{Felikson_2018} reached the same conclusion for $Q$ any 2-complete quiver. The conjecture was proved for various other types of acyclic quivers in \cite{Nguyen} and \cite{Hong_2021}.

So far, the only curves in $\Gamma$ which have been studied are those in $\mathcal{C}(\alpha)$ for some real Schur root $\alpha$. Furthermore, the only question asked about a curve in $\mathcal{C}(\alpha)$ has been whether it contains any self-intersections. In this paper, we study the set $\mathcal{C}(\alpha)$ for $\alpha$ any positive real root of a quiver $Q$ of affine type $\tilde A(n-1,1)$. We show that if $\text{dim Ext}^1_{kQ}(M_\alpha,M_\alpha)=n \geq 0$, then $\mathcal{C}(\alpha)$ contains a curve $F(\alpha)$ with $n$ self-intersections. We also look at pairwise intersections between two curves $F(\alpha) \in \mathcal{C}(\alpha)$ and $F(\beta) \in \mathcal{C}(\beta)$. No homological information has previously been attached to an intersection between two different curves in $\Gamma$; we prove that, for certain pairs ($\alpha, \beta$), the number of intersections between $F(\alpha)$ and $F(\beta)$ is equal to $\text{dim Ext}^1_C (M_\alpha, M_\beta)$. Our main theorem is:

\begin{Thm}\label{mainthm0}
Let $Q$ be a quiver of type $\tilde A(n-1,1)$ and let $\text{Re}_{+}$ denote the set of all positive real roots of the associated root system. Then there exists an injective function
\[F:\text{Re}_{+} \to \bigcup_{\alpha \in \text{Re}_{+}} \mathcal{C}(\alpha)\]
with the properties 
\begin{enumerate}
\item $F(\alpha) \in \mathcal{C}(\alpha)$.
\item If $\alpha$ and $\beta$ are non-Schur roots in $\text{Re}_{+}$ such that $\beta- \alpha \in \{0\} \cup \text{Im}_{+}$\footnote{$\text{Im}_{+}=\{(\lambda,\dots,\lambda)\thinspace | \thinspace 0<\lambda \in \mathbb{Z}\}$.}, or if $\alpha$ and $\beta$ are Schur roots such that $\beta-\alpha \in \{(\lambda,\dots,\lambda)\thinspace | \thinspace 0 \leq \lambda <n\}$, then the number of pairwise intersections between $F(\alpha)$ and $F(\beta)$  is equal to the dimension of $\text{Ext}^{1}_{C}(M_{\alpha},M_{\beta})$.
\end{enumerate}
\end{Thm}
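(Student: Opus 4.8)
The plan is to construct $F$ explicitly and then verify the two counting properties by comparing the self-intersections and pairwise intersections of the planar curves against the dimensions of the relevant extension spaces. The key conceptual tool available is the bridge between the two geometric models: on the one hand the Riemann-surface curves $\mathcal{C}(\alpha)$ of \cite{lee}, and on the other the curves in the marked surface $(\mathcal{S},\mathcal{M})$ of \cite{FST}, whose minimal intersection numbers compute $\dim \text{Ext}^1_C(M_\alpha,M_\beta)$ by Theorem 3.4 of \cite{zhang2012cotorsion}. Since $\tilde A(n-1,1)$ is of affine type $\tilde A$, the marked-surface model is available, so the strategy is to route all homological computations through $(\mathcal{S},\mathcal{M})$ and then transport the resulting combinatorial data back to distinguished representatives $F(\alpha)\in\mathcal{C}(\alpha)$.

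\textbf{Step 1: Define $F$.} First I would give an explicit combinatorial recipe selecting, for each positive real root $\alpha$, a canonical curve $F(\alpha)$ from the family $\mathcal{C}(\alpha)$. For type $\tilde A(n-1,1)$ the real roots split into the Schur roots and the non-Schur roots differing from a Schur root by an imaginary multiple $(\lambda,\dots,\lambda)$, so the recipe should be organized along this split. The natural choice is to take $F(\alpha)$ to be the curve whose number of self-intersections is minimal within $\mathcal{C}(\alpha)$, or equivalently the image under the model comparison of the minimal-intersection representative in $(\mathcal{S},\mathcal{M})$. Injectivity of $F$ should then follow because distinct roots yield distinct dimension vectors, and the construction in \cite{lee} attaches $\mathcal{C}(\alpha)$ to $\alpha$ in a root-dependent way, so $F(\alpha)\neq F(\beta)$ whenever $\alpha\neq\beta$; property (1) holds by construction.

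\textbf{Step 2: Self-intersection count.} Next I would prove, as the $\alpha=\beta$ case of the intersection formula, that the number of self-intersections of $F(\alpha)$ equals $\dim\text{Ext}^1_{kQ}(M_\alpha,M_\alpha)$. The plan is to match self-intersections of the planar curve $F(\alpha)$ with self-intersections of the corresponding curve in $(\mathcal{S},\mathcal{M})$, then invoke the surface-model interpretation of self-extensions. For a Schur root this should give $0$, recovering the known result that $\mathcal{C}(\alpha)$ contains a non-self-intersecting curve; for a non-Schur root of the form Schur-plus-imaginary, the self-intersection count should increase in a controlled, linear way with the imaginary multiple $\lambda$, which is exactly what the abstract asserts via $\dim\text{Ext}^1_{kQ}(M_\alpha,M_\alpha)=n\ge 0$.

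\textbf{Step 3: Pairwise intersections.} Finally I would establish property (2) for the two admissible families of pairs. For each pair $(\alpha,\beta)$ in range I would identify $F(\alpha)$ and $F(\beta)$ with specific arcs in $(\mathcal{S},\mathcal{M})$, compute their minimal intersection number there, and then appeal to Theorem 3.4 of \cite{zhang2012cotorsion} to conclude that this number equals $\dim\text{Ext}^1_C(M_\alpha,M_\beta)$; the remaining task is to check that the geometric intersection number of $F(\alpha)$ and $F(\beta)$ in $\Gamma$ agrees with the surface intersection number. The restriction to the stated ranges of $\beta-\alpha$ is what makes the two geometric models compatible, so the casework should be driven by whether $\alpha,\beta$ are both Schur or both non-Schur and by the value of the imaginary shift. \textbf{The hardest part} will be Step 3's comparison of intersection numbers across the two distinct surface models: the curves of \cite{lee} live in $\Gamma$ while the Ext-counting theorem lives in $(\mathcal{S},\mathcal{M})$, and there is no a priori homeomorphism identifying intersections of one with the other, so one must carefully track how a self-intersection or crossing is created or removed under the passage between models—this is presumably why property (2) is only claimed for restricted pairs rather than all $(\alpha,\beta)$, and the bookkeeping of exactly which crossings survive the comparison is where the genuine work lies.
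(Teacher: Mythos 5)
Your overall architecture --- route the homological computation through the marked surface $(\mathcal{S},\mathcal{M})$ via Theorem 3.4 of \cite{zhang2012cotorsion}, and then compare intersection numbers in $\Gamma$ with intersection numbers in the annulus --- is exactly the paper's strategy, and you have correctly located the crux in the cross-model comparison. But two essential pieces are missing, and one of them is a genuine misstep rather than an omission. First, defining $F(\alpha)$ as ``the curve whose number of self-intersections is minimal within $\mathcal{C}(\alpha)$'' (or as the image of the annulus curve under a ``model comparison'') does not work: there is no a priori map from curves in $(\mathcal{S},\mathcal{M})$ to curves in $\Gamma$, and a minimizer gives you nothing concrete to compute with --- in particular, minimizing self-intersections does not control the pairwise numbers $\text{Int}(F(\alpha),F(\beta))$ that property (2) is about. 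The paper instead writes down $F(\alpha)$ completely explicitly, in three cases (non-Schur roots $(m^a,(m+1)^b,m^c)$, non-Schur roots $((m+1)^a,m^b,(m+1)^c)$, and Schur roots $((m+1)^a,m^b)$), as a prescribed sequence of ray crossings, and verifies $R(F(\alpha))=\alpha$ by tracking the corresponding word in simple reflections; this explicitness is what makes everything downstream computable.

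Second, the cross-model comparison that you flag as the hardest step is left entirely open in your plan. The paper's mechanism is to decompose both $F(\alpha)$ and the annulus curve $\gamma_\alpha^{\text{op}}$ into a ``spiral'' segment $\text{Sp}_m$ and a tail segment $H$, count the pairwise intersections of the four resulting segments combinatorially (getting $2m$ in the type 1 and type 2 cases, and $\lambda-1$ in the Schur case via a separate lemma), and then exhibit an explicit intersection-preserving deformation of $F(\alpha)\cup F(\beta)$ onto $\gamma_\alpha^{\text{op}}\cup\gamma_\beta^{\text{op}}$ obtained by identifying the common endpoints of the planar curves with the common endpoints of the annulus curves. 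The restriction on the pairs $(\alpha,\beta)$ in property (2) falls out of exactly this: the endpoint identifications are only consistent for both curves simultaneously when $\beta-\alpha$ is an imaginary shift (and, for Schur roots, the additional bound $\lambda<n$ is needed because shifting the initial point of the planar curve can delete a spiral). Without the explicit curves and this deformation, neither the equality $\text{Int}(F(\alpha),F(\beta))=\text{Int}(\gamma_\alpha,\gamma_\beta)$ nor the precise hypothesis of (2) can be established.
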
 

\noindent
A corollary to this theorem is:

\begin{Thm} Let $\alpha \in \text{Re}_{+}$. The number of self-intersections of $F(\alpha)$ is equal to the dimension of $\text{Ext}^{1}_{kQ}(M_{\alpha},M_{\alpha})$.
\end{Thm}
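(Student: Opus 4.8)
The plan is to derive this statement from Theorem~\ref{mainthm0} by specializing part~(2) to the case $\beta=\alpha$. First I would check that $\beta=\alpha$ is always admissible there: the difference $\beta-\alpha=0$ lies in $\{0\}\cup\text{Im}_{+}$ when $\alpha$ is non-Schur, and equals $(0,\dots,0)$, which is of the form $(\lambda,\dots,\lambda)$ with $\lambda=0<n$, when $\alpha$ is Schur. Thus, whatever the type of $\alpha$, Theorem~\ref{mainthm0} applies and tells us that the number of pairwise intersections of $F(\alpha)$ with itself equals $\dim\text{Ext}^1_{C}(M_\alpha,M_\alpha)$.

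The next step is to translate this cluster-category dimension into the module-category dimension appearing in the corollary. Since $kQ$ is hereditary, the cluster category $C$ satisfies the standard decomposition
\[\text{Ext}^1_{C}(X,Y)\cong \text{Ext}^1_{kQ}(X,Y)\oplus D\,\text{Ext}^1_{kQ}(Y,X),\qquad D=\text{Hom}_k(-,k),\]
for all $X,Y$ in $kQ$-mod regarded as objects of $C$. Taking $X=Y=M_\alpha$ yields
\[\dim\text{Ext}^1_{C}(M_\alpha,M_\alpha)=2\,\dim\text{Ext}^1_{kQ}(M_\alpha,M_\alpha).\]

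It then remains to reconcile the two counts geometrically. When $\beta=\alpha$ the curves $F(\alpha)$ and $F(\beta)$ coincide, so the intersection number produced by part~(2) must be understood as the number of crossings of $F(\alpha)$ with a parallel copy of itself. Each transverse self-intersection point of $F(\alpha)$ contributes exactly two such crossings, so this quantity equals twice the number of self-intersections of $F(\alpha)$. Combining the three steps,
\[\#\{\text{self-intersections of }F(\alpha)\}=\tfrac12\dim\text{Ext}^1_{C}(M_\alpha,M_\alpha)=\dim\text{Ext}^1_{kQ}(M_\alpha,M_\alpha),\]
which is the assertion.

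I expect this last step to be the main obstacle. One must fix precisely the convention under which part~(2) of Theorem~\ref{mainthm0} counts intersections when the two roots agree, verify that every self-crossing of $F(\alpha)$ really does double under the passage to a parallel copy while no spurious crossings are introduced (for instance near any shared endpoints of $F(\alpha)$), and treat the degenerate Schur case separately, where $\dim\text{Ext}^1_{kQ}(M_\alpha,M_\alpha)=0$ must correspond to a curve $F(\alpha)$ with no self-intersections at all.
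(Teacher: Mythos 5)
Your proposal is correct and follows essentially the same route as the paper: the paper also specializes Theorem \ref{mainthm0}(2) to $\beta=\alpha$ (admissible since $0$ is allowed in both the Schur and non-Schur conditions), invokes the Caldero--Keller decomposition to get $\dim\text{Ext}^1_{C}(M_\alpha,M_\alpha)=2\dim\text{Ext}^1_{kQ}(M_\alpha,M_\alpha)$, and uses the convention that $\text{Int}(F,F)=2\,\text{Int}(F)$ for a curve paired with a homotopic copy of itself, then divides by two. The point you flag as the main obstacle is handled in the paper exactly as you anticipate, by fixing that doubling convention for $\text{Int}(\gamma_\alpha,\gamma_\alpha)$ and for admissible plane curves.
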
 

We will explicitly describe the curve $F(\alpha)$ and see that it shares many of the qualitative properties of the uniquely determined curve for $M_\alpha$ in the unpunctured surface $(\mathcal{S},\mathcal{M})$. The relationship between the curve we construct and the curve in $(\mathcal{S},\mathcal{M})$ will provide insight into why Theorem \ref{mainthm0} holds only for those particular pairs $(\alpha, \beta)$. 

The importance of understanding \cite{lee}'s correspondence goes beyond computing the dimensions of extension spaces in $kQ$-mod and $C$. The construction of the curves in $\Gamma$ relies explicitly on the expression of a dimension vector as a (positive) real root, while the same is not true of the curves in $(\mathcal{S},\mathcal{M})$. The fact that such curves can measure the dimensions of Ext spaces perhaps points to a link between the root system of $Q$ and the homological data of $kQ$-mod. Furthermore, the fact that this measurement may be explained through a connection between the surfaces of \cite{lee} and of \cite{FST} perhaps suggests a link between the root system and the other objects associated to $(\mathcal{S},\mathcal{M})$, such as the cluster algebra arising from it.

\subsection*{Acknowledgments} I thank Kyungyong Lee for introducing me to the representation theory of quivers and for many invaluable discussions on the subject. Also, I thank Fabian Haiden for pointing out the paper \cite{Schroll}.

\section{Background}
\subsection{Real Roots}
 
\noindent
Fix an algebraically closed field $k$ and an integer $n\geq 3$. A quiver $Q$ of affine type $\tilde A_{n-1}$ is one which has the underlying graph:
\vspace{2mm}
 \begin{center}
 \begin{tikzpicture}[scale=0.8]
\node[scale=.8] at (0,0){1};
\node[scale=.8] at (7.4,0){$n$};
\node[scale=.8] at (1.4,1.5){2};
\node[scale=.8] at (6.2,1.5){$n-1$};
\draw [-, thick] (0.2,0.2)--(1.3,1.3);
\draw [-, thick] (1.6,1.5)--(3.2,1.5);
\draw[dotted](3.4,1.5) -- (3.8,1.5);
\draw [-, thick] (4,1.5)--(5.6,1.5);
\draw [-, thick] (6.1,1.3)--(7.2,0.2);
\draw [-, thick] (0.3,0)--(7.1,0);
\end{tikzpicture}
\end{center}

Following \cite{Kac}, we define the positive real roots associated to this graph as follows. Associate to each vertex $i$, $1 \leq i \leq n$, the $n$-tuple $\alpha_i=(\delta_{i1},\dots,\delta_{in}) \in \mathbb{Z}^n$, where $\delta_{ij}$ is the Kronecker delta. Call $\alpha_i$ the $i$th simple root. Define a symmetric bilinear form on $\mathbb{Z}^n$ by 
\[
(\alpha_i,\alpha_j)=\delta_{ij}-\frac{1}{2}b_{ij}, 
\]
where $b_{ij}$ is the number of edges between the vertices $i$ and $j$ (if $i=j$, $b_{ij}=0$). Define the $i$th simple reflection $s_i$ by the matrix whose $j$th column is 
\[
s_i(\alpha_j)=\alpha_j-2(\alpha_i,\alpha_j)\alpha_i.
\]
 
In what follows, an explicit description of the simple reflection $s_i$ will be useful. Observe that 
\[
s_i(\alpha_j) = 
  \begin{cases}
    \alpha_i+\alpha_j & \text{if } i-j=\pm 1 \text{ or } i-j=\pm (n-1) \\
    -\alpha_i & \text{if } i=j \\
    \alpha_j & \text{otherwise}
  \end{cases} 
\]
If $i=1$, then the only $j$, $1\leq j \leq n$, which fall under the first case are $j=i+1$ and $j=n$. If $i=n$, then the only $j$ which fall under the first case are $j=i-1$ and $j=1$. If $1<i<n$, then the only $j$ which fall under the first case are $j=i-1$ and $j=i+1$. In terms of the columns of $s_i$, this means that if $i=1$, then the $n$th and the $(i+1)$st columns are equal to $\alpha_i+\alpha_n$ and $\alpha_i+\alpha_{i+1}$, respectively; if $i=n$, then the $1$st and the $(i-1)$st columns are equal to $\alpha_i+\alpha_1$ and $\alpha_i+\alpha_{i-1}$, respectively; if $1<i<n$, then the $(i-1)$st column is $\alpha_1+\alpha_{i-1}$ and the $(i+1)$st column is $\alpha_i+\alpha_{i+1}$. The columns which are equal to $\alpha_i+\alpha_j$ we shall call ``adjacent" to the $i$th column $s_i(\alpha_i)$. The adjacent columns have zeros for every component except the $i$th and $j$th, which are 1.

It is not hard to see that the rows of the matrix for $s_i$ are the same as those of the identity matrix, except for the $i$th row, which has $-1$ for the $i$th component, 1 for the two adjacent components, and 0s elsewhere. Therefore, for any root $\alpha$, $s_i(\alpha)$ differs from $\alpha$ only by the $i$th component. If we denote the $i$th component of $\alpha$ by $\alpha^i$ and the two adjacent components by $\alpha^{i-1}$ and $\alpha^{i+1}$, then the $i$th component of $s_i(\alpha)$ is $\alpha^{i-1}+\alpha^{i+1}-\alpha^{i}$.

\begin{Def}
A real root is an element of $\mathbb{Z}^n$ of the form $s_{i_1} \cdots s_{i_j}  (\alpha_k)$, for some $j$ simple reflections $s_{i_1}$ through $s_{i_j}$ and a simple root $\alpha_k$. A real root is called positive if all its components are nonnegative. 
\end{Def}  

\begin{Prop}\label{realroot} If $\alpha$ is a real root for the graph of type $\tilde A_{n-1}$, then $\alpha$ has one of the following forms:
\begin{enumerate}[1.] 
\item $(m,\dots,m,m+1,\dots,m+1,m,\dots,m)=(m^a,(m+1)^b,m^c) \quad $ 
\item $(m+1,\dots,m+1,m,\dots,m,m+1,\dots,m+1)=((m+1)^a,m^b,(m+1)^c)$
\item $-(m^a,(m+1)^b,m^c)$ or $-((m+1)^a,m^b,(m+1)^c)$
\end{enumerate}
for $m,a,b,c$ nonnegative integers such that $a+b+c=n$, $a+c \neq 0$, and $a+c \neq n$.
\end{Prop}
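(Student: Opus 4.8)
The plan is to argue by induction on the number $j$ of simple reflections in an expression $\alpha = s_{i_1}\cdots s_{i_j}(\alpha_k)$. It is convenient to package all three listed forms into a single invariant. Call a vector $\beta \in \mathbb{Z}^n$ \emph{banded} if there is an integer $m$ (possibly negative) such that every entry of $\beta$ lies in $\{m, m+1\}$, both values actually occur, and, traversing the cyclic sequence of components $\beta^1, \beta^2, \ldots, \beta^n, \beta^1$, the value changes exactly twice. A short check shows the banded vectors are precisely those of forms (1)--(3): if $m \ge 0$ one gets (1) or (2) according to whether the block of $(m+1)$'s avoids or contains the edge between vertices $n$ and $1$, and if $m < 0$ one gets the negatives in (3). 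Moreover the requirement that both values occur is exactly the content of the conditions $a+c \neq 0$ and $a+c \neq n$ (i.e. non-constancy). So it suffices to prove that every real root is banded.

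For the base case, each simple root $\alpha_k$ has a single $1$ and $n-1$ zeros, so it is banded with $m=0$ and the singleton $\{k\}$ playing the role of the $(m+1)$-arc; equivalently it is form (1) with $b=1$. It then remains to show that the set of banded vectors is closed under every simple reflection $s_i$, after which induction finishes the argument. For this step I will use the explicit description recorded just before the Definition: $s_i$ fixes every component except the $i$-th, which becomes $\beta^{i-1}+\beta^{i+1}-\beta^i$, with the neighbors $i-1,i+1$ read cyclically. I would fix a banded $\beta$ with values in $\{m,m+1\}$ and run through the possible local triples $(\beta^{i-1}, \beta^i, \beta^{i+1})$.

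When all three agree, or when $\beta^i$ sits at a boundary between the two arcs, the new $i$-th entry stays in $\{m, m+1\}$ and the effect is merely to slide one of the two ``jumps'' by one edge; so the number of jumps remains two and $\beta$ stays banded. The only triples that leave $\{m, m+1\}$ are those in which $\beta^i$ is an isolated singleton block, namely the patterns $(m, m+1, m)$ and $(m+1, m, m+1)$, for which the new entry is $m-1$ or $m+2$ respectively. But the two-jump condition forces every other component to be constant in these cases, so the reflected vector is again constant off a single vertex; it is therefore banded, with the range $\{m,m+1\}$ merely shifted by one. Non-constancy is preserved throughout, since a constant vector is fixed by every $s_i$ while each simple root is non-constant.

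The main obstacle is this inductive step, and within it the bookkeeping for contiguity on the cycle: in each boundary pattern I must verify that sliding a jump does not accidentally create or destroy a jump elsewhere, which is where the cyclic adjacency (neighbors of $1$ being $n$ and $2$, neighbors of $n$ being $n-1$ and $1$) must be handled with care rather than as an ordinary linear chain. Once closure of the banded set under all $s_i$ is established, the proposition follows immediately: every real root lies in the orbit of a simple root, hence is banded, hence has one of the forms (1)--(3) with the stated constraints on $a,b,c$.
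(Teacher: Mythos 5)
Your proposal is correct and follows essentially the same route as the paper: both arguments reduce to showing that the set of vectors of forms (1)--(3) is closed under every simple reflection, via a case analysis on the local triple $(v^{i-1},v^i,v^{i+1})$. Your ``banded'' reformulation (values in $\{m,m+1\}$ with exactly two cyclic jumps) is a clean way to package the membership condition and makes explicit the verification that the paper leaves as ``easy to check,'' but it is the same decomposition and the same inductive closure argument.
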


\begin{proof} Let us call the set of roots having these forms $\Delta$. Since $\alpha$ is obtained as a sequence of simple reflections applied to a simple root, and all of the simple roots are in $\Delta$, it suffices to prove that an application of a simple reflection to an element of $\Delta$ always gives another element of $\Delta$. So let $\mathbf{v}=(v^1,\dots,v^i,\dots,v^n) \in \Delta$ and let $s_i$ be an arbitrary simple reflection. The $i$th component of $s_i(\mathbf{v})$ is $v^{i-1}+v^{i+1}-v^{i}$. Since by assumption $v^{i-1}$, $v^{i}$, and $v^{i+1}$ are all equal, or exactly two of them are, $v^{i-1}+v^{i+1}-v^{i}$ must be one of the following options: 
 
\begin{center}
\begin{tabular}{ll} 
	Relation of $v^{i}$ to adjacent components & $i$th component of $s_i(\mathbf{v})$\\ 
	$v^{i}=v^{i-1}=v^{i+1}$ & $v^i$ \\
    $v^i=v^{i-1}-1=v^{i+1}-1$ & $v^i+2$ \\
    $v^i=v^{i-1}+1=v^{i+1}+1$ & $v^i-2$ \\
    $v^i=v^{i-1}-1=v^{i+1}$ & $v^i+1$ \\
    $v^i=v^{i-1}+1=v^{i+1}$ & $v^i-1$ \\
    $v^i=v^{i-1}=v^{i+1}-1$ & $v^i+1$ \\
    $v^i=v^{i-1}=v^{i+1}+1$ & $v^i-1$ \\
\end{tabular}
\end{center}

\noindent
Now with all the cases made explicit, it is easy to check that $s_i(\mathbf{v})\in \Delta$. Hence every real root is in $\Delta$.
\end{proof}

When we prove the main result of this paper, we will display a plane curve corresponding to any root of type 1 or 2. By the very definition of an admissible plane curve (to be given shortly), constructing a curve for $\mathbf{v}$ is equivalent to finding an expression for $\mathbf{v}$ of the form $s_{i_1} \cdots s_{i_j}  (\alpha_k)$. Hence the proof of Theorem \ref{mainthm0} implies that the roots of types 1 and 2 are (positive) real roots. It follows from Proposition \ref{realroot} that all positive real roots are of type 1 or 2, and, therefore, that Theorem \ref{mainthm0} holds for all elements of $\text{Re}_{+}$.

\subsection{Surfaces}
In this paper, we consider the quiver $Q$ of affine type $\tilde A_{n-1}$ which has the orientation: 

\vspace{2mm}
 \begin{center}
 \begin{tikzpicture}[scale=0.8]
\node[scale=.8] at (0,0){1};
\node[scale=.8] at (7.4,0){$n$};
\node[scale=.8] at (1.4,1.5){2};
\node[scale=.8] at (6.2,1.5){$n-1$};
\draw [->, thick] (0.2,0.2)--(1.3,1.3);
\draw [->, thick] (1.6,1.5)--(3.2,1.5);
   \draw[dotted](3.4,1.5) -- (3.8,1.5); 
\draw [->, thick] (4,1.5)--(5.6,1.5);
\draw [->, thick] (6.1,1.3)--(7.2,0.2);
\draw [->, thick] (0.3,0)--(7.1,0);
\end{tikzpicture}
\end{center} 

In \cite{FST}, $Q$ is said to be of type $\tilde A(n-1,1)$ to indicate that there are $n-1$ arrows in one direction and 1 in the other direction. A fact which will be used later is that the (real) Schur roots of this quiver are those of type 1 in which either $a$ or $c$ is 0, using the notation of Proposition \ref{realroot}. The triangulated marked surface corresponding to $Q$ (cf. section 2.1 of \cite{article}) is this annulus with $n-1$ marked points on the outer boundary and $1$ marked point on the inner boundary:

\vspace{2mm}

\begin{center}
\begin{tikzpicture}[pics/annulus/.style={code={
    \draw (0,0) circle (3cm)
   	circle (.5cm);
   	\draw[black] (0cm,-.5cm) -- (250:3cm)  node[midway,left, scale=.8]{$l_1$};
   	\draw[dotted] (256:2.5) to[out=-20,in=0](283:2.5);    
   	\draw[black] (0cm,-.5cm) -- (290:3cm)   node[midway,left, scale=.8]{$l_{n-1}$};
 	\draw [black] plot [smooth, tension=.6] coordinates { (270:.5) (290:.6)(310:.7) (330:.85) (350:1) (20:1.2) (40:1.35)(60:1.4)(80:1.4)(100:1.4) (120:1.4)(140:1.4) (160:1.4) (180:1.4) (200:1.45) (220:1.65) (250:3)}; 	 
   	\node[left, scale=.8] at (-2,0){$\thickspace l_n$};  
   	\draw[fill] (0cm,-.5cm) circle(1pt) node[above, scale=.8]{$M_n$}; 	 
   	\draw[fill] (250:3) circle(1pt) node[below, scale=.8]{$M_1$};	
   	\draw[fill] (290:3) circle(1pt)node[below, scale=.8]{$M_{n-1}$};}}]
   	\draw [scale=0.8]pic[xscale=-1] {annulus};
\end{tikzpicture}
\end{center}  

We will refer to this surface as $(\mathcal{S},\mathcal{M})$, where $\mathcal{M}$ denotes the set of boundary marked points, $\mathcal{M}=\{M_{1},\dots,M_n\}$. In this paper, a curve in $(\mathcal{S},\mathcal{M})$ is the homotopy class of an oriented noncontractible smooth curve which starts at some marked point and ends at another, not necessarily distinct, marked point. The number of intersections between two curves is, by definition, the minimal number of transverse intersections occurring between any two representatives of the respective homotopy classes. The curves which are part of the triangulation of $(\mathcal{S},\mathcal{M})$ shall be called arcs to avoid confusion, and labelled by $l_1,\dots,l_n$.

As mentioned in the Introduction, Proposition 4.2 of \cite{article} (see also the remark following it) states that, up to orientation, curves in $(\mathcal{S},\mathcal{M})$ are in bijection with the isomorphism classes of string modules over a Jacobian algebra which in our case is $kQ$, the path algebra of $Q$.  The bijection in Proposition 4.2 has the property that if a curve $\gamma$ in $(\mathcal{S},\mathcal{M})$ corresponds to an indecomposable $kQ$-module $M$, then the dimension vector of $M$ is equal to the $n$-tuple of integers whose $i$th component is the number of times $\gamma$ intersects $l_i$.

Fix a positive real root $\alpha$ of $Q$ and let $M_\alpha$ be the unique (up to isomorphism) module with dimension vector $\alpha$. In the course of this article, we will display a curve $\gamma$ in $(\mathcal{S},\mathcal{M})$ that intersects $l_i$ exactly $w$ times, where $w$ is  the $i$th component of $\alpha$. By Proposition 4.2, the module corresponding to $\gamma$ has dimension vector $\alpha$ -- hence it is $M_\alpha$. 

\begin{Exa} Consider the quiver $\tilde A(3,1)$: 

\vspace{2mm} 
 \begin{center}
 \begin{tikzpicture}[scale=0.8]
\node[scale=.8] at (0,0){1};
\node[scale=.8] at (4.7,0){$4$};
\node[scale=.8] at (1.4,1.5){$2$};
\node[scale=.8] at (3.4,1.5){$3$};
\draw [->, thick] (0.2,0.2)--(1.3,1.3);
\draw [->, thick] (1.6,1.5)--(3.2,1.5);
\draw [->, thick] (3.5,1.3)--(4.6,.2);
\draw [->, thick] (0.3,0)--(4.5,0);
\end{tikzpicture}
\end{center} 
$(\mathcal{S},\mathcal{M})$ in this case is:
\vspace{2mm}

\begin{center}
\begin{tikzpicture}[pics/annulus2/.style={code={
   \draw (0,0) circle (3cm)
   	circle (.5cm);
   	\draw[black] (0cm,-.5cm) -- (250:3cm)  node[midway,left, scale=.8]{$l_1$}; 
   	\draw[black] (0cm,-.5cm) -- (290:3cm) node[midway, left, scale=.8]{$l_3$};
   	\draw [black] plot [smooth, tension=.6] coordinates { (270:.5) (290:.6)(310:.7) (330:.85) (350:1) (20:1.2) (40:1.35)(60:1.4)(80:1.4)(100:1.4) (120:1.4)(140:1.4) (160:1.4) (180:1.4) (200:1.45) (220:1.65) (250:3)}; 	 
   	\node[left, scale=.8] at (-1.9,0){$\thickspace l_4$}; 
   	\draw[fill] (0cm,-.5cm) circle(1pt) node[above, scale=.8]{$M_4$};
   	\draw[fill] (250:3) circle(1pt) node[below, scale=.8]{$M_1$};			
   	\draw[fill] (0cm,-3cm) circle(1pt) node[below, scale=.8]{$M_{2}$};  
   	\draw[fill] (290:3) circle(1pt)node[below, scale=.8]{$M_3$};
   	\draw[black] (0cm,-.5cm) -- (0cm,-3cm) node[midway,left, scale=.8]{$l_2$};}}]
   	\draw [scale=.8]pic[xscale=-1] {annulus2};
\end{tikzpicture}
\end{center}
The root $\alpha=(2,2,1,2)$ corresponds to the curve: 
\vspace{2mm}
\begin{center}
\begin{tikzpicture}[pics/annulus3/.style={code={
  \draw (0,0) circle (3cm)
   	circle (.5cm);
   	\draw[black] (0cm,-.5cm) -- (250:3cm)  node[midway,left, scale=.8]{$l_1$};
   	\draw[black] (0cm,-.5cm) -- (290:3cm) node[midway, left, scale=.8]{$l_3$};
   	\draw [black] plot [smooth, tension=.6] coordinates { (270:.5) (290:.6)(310:.7) (330:.85) (350:1) (20:1.2) (40:1.35)(60:1.4)(80:1.4)(100:1.4) (120:1.4)(140:1.4) (160:1.4) (180:1.4) (200:1.45) (220:1.65) (250:3)}; 	 
   	\node[left, scale=.8] at (-1.9,0){$\thickspace l_4$}; 
   	\draw[fill] (0cm,-.5cm) circle(1pt) node[above, scale=.8]{$M_4$};
   	\draw[fill] (250:3) circle(1pt) node[below, scale=.8]{$M_1$};		
   	\draw[fill] (0cm,-3cm) circle(1pt) node[below, scale=.8]{$M_{2}$};  
   	\draw[fill] (290:3) circle(1pt)node[below, scale=.8]{$M_3$};
   	\draw[black] (0cm,-.5cm) -- (0cm,-3cm) node[midway,left, scale=.8]{$l_2$};
   	\draw [DarkRed, line width= .5mm] plot [smooth, tension=0.75] coordinates { (290:3) (360:2.5) (60:2.5) (120:2.5) (180:2.5) (240:2.5) (300:2.5) (360:2.75) (60:2.75) (120:2.75) (180:2.75) (240:2.75) (290:3) };}}]
   	\draw [scale=.8]pic[xscale=-1] {annulus3};
\end{tikzpicture}
\end{center}
\end{Exa}

The surface which \cite{lee} defines for the quiver $Q$ shall be called $\Gamma$. $\Gamma$ can be modelled (such as in \cite{Felikson_2018}) as the upper half-plane with a ``basepoint" $B$ on the real axis and $n$ marked points $\{p_1,\dots,p_n\}$, such that $\text{Im}(p_1)=\cdots=\text{Im}(p_n)$ and such that $p_i$ is the endpoint of a vertical ray $r_i$:

\vspace{2mm}

\begin{center}
\begin{tikzpicture}[scale=1]
\fill[Linen] (-1,0) rectangle (5,3);
\draw[fill] (0,1.5) circle(1 pt) node[below, scale=.8] {$p_1$}; 
\draw[loosely dotted] (.5,1.5) -- (3.5,1.5);
\draw[fill] (4,1.5) circle(1 pt) node[below, scale=.8] {$p_n$};
\draw[fill] (2,0) circle(1 pt) node[below, scale=.8] {$B$}; 
\draw[-] (-1,0) -- (5,0);
\draw[densely dotted] (0,1.5) -- (0,3) node[below left, scale=.8]{$r_1$}; 
\draw[densely dotted] (4,1.5) -- (4,3) node[below left, scale=.8]{$r_n$}; 
\end{tikzpicture}
\end{center}

An admissible curve, or simply a curve, in $\Gamma$ is the homotopy class of an oriented smooth curve which begins at some $p_i$ and ends at $B$. As with curves in $(\mathcal{S},\mathcal{M})$, intersections are considered up to homotopy. Fix  $\alpha\in \text{Re}_{+}$. Since $\alpha$ is real, it has at least one expression as $s_{i_1} \cdots s_{i_j} (\alpha_k)$. An element of $\mathcal{C}(\alpha)$ is a curve which begins at the point $p_k$ and intersects the rays $r_{i_j}$ through $r_{i_1}$, in right to left order, as it increases. $\mathcal{C}(\alpha)$ is usually quite a large set, since, first of all, the expression $s_{i_1} \cdots s_{i_j} (\alpha_k)$ for a positive real root $\alpha$ is typically not unique. Furthermore, even when one expression is fixed upon, it may be possible to construct several non-homotopic curves which start at $p_k$ and intersect the rays $r_{i_j}$ through $r_{i_1}$. 

\begin{Exa} Consider the quiver $\tilde A(2,1)$. Below are three curves in $\mathcal{C}(\alpha)$, where $\alpha=(2,2,1)$. Note that the first two curves are obtained through the same sequence of simple reflections.

\vspace{2mm}

\begin{tikzpicture}[scale=.8]
\fill[Linen] (-1,0) rectangle (5,3);
\draw[fill] (0,1.5) circle(1 pt) node[below, scale=.8] {$p_1$}; 
\draw[fill] (2,1.5) circle(1 pt) node[below, scale=.8] {$p_2$};
\draw[fill] (4,1.5) circle(1 pt) node[below, scale=.8] {$p_3$};
\draw[fill] (2,0) circle(1 pt) node[below, scale=.8] {$B$}; 
\draw[-] (-1,0) -- (5,0);
\draw[densely dotted] (0,1.5) -- (0,3) node[below left, scale=.8]{$r_1$}; 
\draw[densely dotted] (2,1.5) -- (2,3) node[below left, scale=.8]{$r_2$}; 
\draw[densely dotted] (4,1.5) -- (4,3) node[below left, scale=.8]{$r_3$}; 
\draw [DarkRed, line width=0.5mm] plot [smooth, tension=0.6] coordinates { (4,1.5) (2,2) (1,1.6) (0,1) (-.5,1.5) (0,2) (1, 1.6) (2,1)(2.7,1) (2.7,1.4) (2,1.7) (1.6,1.6) (1.4,1.4) (1.3, .8) (2,0)};
\node at (2,-1) {$s_2s_1s_2\alpha_3=221$};
\end{tikzpicture}
\hspace{2mm}
\begin{tikzpicture}[scale=.8]
\fill[Linen] (-1,0) rectangle (5,3);
\draw[fill] (0,1.5) circle(1 pt) node[below, scale=.8] {$p_1$}; 
\draw[fill] (2,1.5) circle(1 pt) node[below, scale=.8] {$p_2$};
\draw[fill] (4,1.5) circle(1 pt) node[below, scale=.8] {$p_3$};
\draw[fill] (2,0) circle(1 pt) node[below, scale=.8] {$B$}; 
\draw[-] (-1,0) -- (5,0);
\draw[densely dotted] (0,1.5) -- (0,3) node[below left, scale=.8]{$r_1$}; 
\draw[densely dotted] (2,1.5) -- (2,3) node[below left, scale=.8]{$r_2$}; 
\draw[densely dotted] (4,1.5) -- (4,3) node[below left, scale=.8]{$r_3$};
\draw [DarkRed,line width=0.5mm] plot [smooth, tension=0.6] coordinates { (4,1.5) (3,2) (2,2.2) (0,2) (-.5,1.3) (0,.8) (.8,1) (1.5,1.5) (2,1.8) (2.5,1.8) (2.8,1.5) (2.5,.9) (2,0)};
\node at (2,-1) {$s_2s_1s_2\alpha_3=221$}; 
\end{tikzpicture}
\hspace{2mm}
\begin{tikzpicture}[scale=.8]
\fill[Linen] (-1,0) rectangle (5,3);
\draw[fill] (0,1.5) circle(1 pt) node[below, scale=.8] {$p_1$}; 
\draw[fill] (2,1.5) circle(1 pt) node[below, scale=.8] {$p_2$};
\draw[fill] (4,1.5) circle(1 pt) node[below, scale=.8] {$p_3$};
\draw[fill] (2,0) circle(1 pt) node[below, scale=.8] {$B$}; 
\draw[-] (-1,0) -- (5,0);
\draw[densely dotted] (0,1.5) -- (0,3) node[below left, scale=.8]{$r_1$}; 
\draw[densely dotted] (2,1.5) -- (2,3) node[below left, scale=.8]{$r_2$}; 
\draw[densely dotted] (4,1.5) -- (4,3) node[below left, scale=.8]{$r_3$}; 
\draw [DarkRed, line width=0.5mm] plot [smooth, tension=0.6] coordinates { (0,1.5) (1,1) (2,.8) (3,.8) (4,.9) (4.5,1.2) (4.5, 1.6) (4,2) (3,2.15) (2,2.2) (1,2.15) (0,2) (-.5,1.7) (-.6,1.4) (-.5,1)(0,.6) (2,0)}; 
\node at (2,-1) {$s_1s_2s_3\alpha_1=221$};
\end{tikzpicture}
\end{Exa}

An admissible curve $F$ corresponds to exactly one root $\alpha$ and to exactly one particular expression $s_{i_1} \cdots s_{i_j}  (\alpha_k)$ of $\alpha$. The two associations  $F \mapsto \alpha$  and $F \mapsto s_{i_1} \cdots s_{i_j}  (\alpha_k)$ are surjective functions which we will denote, respectively, by $R(F)=\alpha$ and $S(F)=s_{i_1} \cdots s_{i_j}  (\alpha_k)$. Then the following are equivalent: $F \in \mathcal{C}(\alpha)$, $R(F)=\alpha$, and $S(F)$ is an expression of $\alpha$.  

\subsection{Extension Spaces}

In this section, let $Q$ be any finite, acyclic quiver. We refer to \cite{CALDERO2006983}, section 2.3,  for the general construction of the cluster category $C$ of $kQ$-mod. The indecomposable objects of $kQ$-mod form a subset of the indecomposable objects of $C$; Proposition 1 in \cite{CALDERO2006983} gives a formula relating the Ext spaces in the two categories. Let $M, N$ be indecomposable objects in $kQ$-mod, and therefore also in $C$. Then (\cite{CALDERO2006983}),

\begin{equation}\label{ext}
\text{Ext}^1_{C}(M,N)=\text{Ext}^1_{kQ}(M,N) \oplus D\text{Ext}^1_{kQ}(N,M), 
\end{equation} 
where $ D\text{Ext}^1_{kQ}(N,M)$ is the dual vector space of  $\text{Ext}^1_{kQ}(N,M)$. In particular, taking dimensions of $k$-vector spaces gives

\begin{equation}\label{dim}
\text{dim Ext}^1_{C}(M,N)=\text{dim Ext}^1_{kQ}(M,N)+\text{dim Ext}^1_{kQ}(N,M) =\text{dim Ext}^1_{C}(N,M).
\end{equation}
When $N=M$,

\begin{equation}\label{dim2}
\text{dim Ext}^1_{C}(M,M)=2\text{dim Ext}^1_{kQ}(M,M).
\end{equation}  

Let $C(\mathcal{S},\mathcal{M})$ denote the cluster category associated to $(\mathcal{S},\mathcal{M})$. When $Q$ is acyclic, $C(\mathcal{S},\mathcal{M})$ is equivalent as a category to $C$.  Let $\gamma_\alpha$ and $\gamma_\beta$ be two curves in $(\mathcal{S},\mathcal{M})$ with corresponding indecomposable $kQ$-modules $M_\alpha$ and $M_\beta$. As just described, $M_\alpha$ and $M_\beta$ are indecomposable objects in $C(\mathcal{S},\mathcal{M})$. In \cite{zhang2012cotorsion}, $\text{Int}(\gamma_\alpha, \gamma_\beta)$ denotes the minimal number of intersections occurring between any two representatives of the respective homotopy classes of $\gamma_\alpha$ and $\gamma_\beta$. We recall Theorem 3.4 of \cite{zhang2012cotorsion}, which states that

\begin{equation}\label{int1}
\text{Int}(\gamma_\alpha,\gamma_\beta)=\text{dim Ext}^1_{C(S,M)}(M_\alpha, M_\beta).
\end{equation}
Combining this formula with \ref{dim} gives 

\begin{equation}\label{int2}
\text{Int}(\gamma_\alpha,\gamma_\beta)=\text{dim Ext}^1_{kQ}(M_\alpha,M_\beta)+\text{dim Ext}^1_{kQ}(M_\beta,M_\alpha).
\end{equation}
In particular, $\text{Int}(\gamma_\alpha, \gamma_\alpha)$ is to be interpreted as the number of pairwise intersections between two homotopic copies of $\gamma_\alpha$, which is evidently double the ordinary self-intersection number of $\gamma_\alpha$, which we will denote in this paper by $\text{Int}(\gamma_\alpha)$. Hence by \ref{dim2} and \ref{int2},

\begin{equation}\label{int3}
\text{Int}(\gamma_\alpha)=\text{dim Ext}^1_{kQ}(M_\alpha,M_\alpha).
\end{equation}

It is also true for an admissible plane curve $F$ that $\text{Int}(F,F)=2\text{Int}(F)$. 

Now let $Q$ once again denote a quiver of type $\tilde A(n-1,1)$. Because of equation \ref{int1} we can reformulate Theorem \ref{mainthm0} as:

\begin{Thm} \label{mainthm} Fix $n \geq 3$ and let $\text{Re}_{+}$ be the set of positive real roots of the root system associated to $Q$. For each $\alpha \in \text{Re}_{+}$ let $\gamma_\alpha$ be the unique curve for $\alpha$ in $(\mathcal{S},\mathcal{M})$. Let $\mathcal{P}$ denote the set of admissible plane curves in $\Gamma$. Then there is an injective function
\[F:\text{Re}_{+} \to \mathcal{P}\]
with the properties
\begin{enumerate}
\item $R(F(\alpha))=\alpha$.
\item If $\alpha$ and $\beta$ are non-Schur roots such that $\beta- \alpha \in \{0\} \cup \text{Im}_{+}$, or if $\alpha$ and $\beta$ are Schur roots such that $\beta-\alpha \in \{(\lambda,\dots,\lambda)\thinspace | \thinspace 0\leq \lambda <n\}$, then $\text{Int} (F(\alpha),F(\beta))=\text{Int}(\gamma_\alpha,\gamma_\beta)= \text{dim Ext}^{1}_{C}(M_{\alpha},M_{\beta})=\text{dim Ext}^1_C(M_\beta,M_\alpha)$.
\end{enumerate} 
\end{Thm}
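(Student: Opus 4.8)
The plan is to construct the function $F$ explicitly, one root at a time, and then to verify the three claimed equalities in property (2), only one of which is genuinely new. By Proposition \ref{realroot}, every $\alpha \in \text{Re}_{+}$ has the form $(m^a,(m+1)^b,m^c)$ or $((m+1)^a,m^b,(m+1)^c)$ with $a+b+c=n$. For each such $\alpha$ I would first read off, from the parameters $(m,a,b,c)$ and the explicit action of the simple reflections recorded in the proof of Proposition \ref{realroot}, a natural reduced expression $s_{i_1}\cdots s_{i_j}(\alpha_k)$ for $\alpha$, and then draw one canonical admissible curve in $\Gamma$ realizing exactly that sequence of ray crossings; this curve is $F(\alpha)$. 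By the definitions of $\mathcal{C}(\alpha)$ and of the map $R$, property (1), namely $R(F(\alpha))=\alpha$, then holds by construction. Injectivity is immediate from property (1): if $F(\alpha)=F(\beta)$ then $\alpha=R(F(\alpha))=R(F(\beta))=\beta$, since $R$ is a well-defined function on admissible curves.

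For property (2), the chain $\text{Int}(\gamma_\alpha,\gamma_\beta)=\text{dim Ext}^1_C(M_\alpha,M_\beta)=\text{dim Ext}^1_C(M_\beta,M_\alpha)$ is already furnished by equations \ref{int1} and \ref{dim} (using $C(\mathcal{S},\mathcal{M})\simeq C$), so the whole content to be established is the single geometric identity
\[
\text{Int}(F(\alpha),F(\beta))=\text{Int}(\gamma_\alpha,\gamma_\beta).
\]
My approach is to build an explicit dictionary between the plane $\Gamma$ and the annulus $(\mathcal{S},\mathcal{M})$, realizing the band of $\Gamma$ containing our curves as an unrolling of the annulus: the rays $r_1,\dots,r_n$ correspond to lifts of the arcs $l_1,\dots,l_n$, the marked points $p_1,\dots,p_n$ to lifts of the boundary marked points, and the basepoint $B$ to the inner marked point $M_n$. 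Under this dictionary I would show that $F(\alpha)$ corresponds to $\gamma_\alpha$, verifying in particular that $F(\alpha)$ meets each ray $r_i$ the same number of times that $\gamma_\alpha$ meets $l_i$, consistent with both counts equalling $\alpha^i$ (the latter by Proposition 4.2 of \cite{article}).

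With the dictionary in hand, the intersection identity reduces to a careful comparison of how the two pairs of curves wind past one another in the two pictures. I would compute both $\text{Int}(F(\alpha),F(\beta))$ and $\text{Int}(\gamma_\alpha,\gamma_\beta)$ as explicit functions of the parameters of $\alpha$ and $\beta$ and check that they agree. Here the hypotheses enter decisively: when $\beta-\alpha$ is $0$ or an imaginary root $(\lambda,\dots,\lambda)$ for non-Schur roots, or lies in the narrow range $0\leq\lambda<n$ for Schur roots, the lift of $\gamma_\beta$ can be chosen to sit in the same fundamental band as $F(\alpha)$, so that no transverse intersection is created or destroyed in passing between $\Gamma$ and the annulus. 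For relative positions outside these ranges, the unrolling either introduces spurious crossings or conceals genuine ones behind the $\mathbb{Z}$-periodicity of the annulus, which is precisely why the statement is confined to these pairs.

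I expect the intersection bookkeeping in the unrolling picture to be the main obstacle. The subtlety is that a single transverse intersection downstairs may correspond to several intersections among the lifts, or to none, depending on the choice of lift; controlling this requires pinning down both curves up to homotopy and checking that the canonical representatives I choose are simultaneously taut, that is, realize the minimal intersection number in both surfaces at once. The self-intersection case needed for the corollary is recovered by taking $\beta=\alpha$ and invoking equation \ref{int3}, and the most delicate remaining point will be to confirm that the restriction on $\beta-\alpha$ is sharp, i.e. that the equality genuinely fails once one leaves the stated range.
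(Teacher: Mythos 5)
Your high-level architecture agrees with the paper's: construct $F(\alpha)$ by realizing an explicit reduced expression for $\alpha$ as a sequence of ray crossings (so that (1) holds by definition of $R$, and injectivity follows), and observe that equations \ref{int1} and \ref{dim} already give $\text{Int}(\gamma_\alpha,\gamma_\beta)=\dim\text{Ext}^1_C(M_\alpha,M_\beta)=\dim\text{Ext}^1_C(M_\beta,M_\alpha)$, so that the only new content is the identity $\text{Int}(F(\alpha),F(\beta))=\text{Int}(\gamma_\alpha,\gamma_\beta)$. All of that is sound. The gap is in how you propose to prove that identity. A single global dictionary in which $B$ corresponds to the inner marked point $M_n$ and each $p_i$ to a fixed lift of a boundary marked point cannot carry $F(\alpha)$ to $\gamma_\alpha$: both endpoints of $\gamma_\alpha$ lie on the outer boundary for non-Schur roots (it runs from $M_a$ to $M_{a+b+1}$ for type 1, from $M_{a+b}$ to $M_{a+1}$ for type 2), and for Schur roots it is the \emph{initial} point that is $M_n$, while $F(\alpha)$ always runs from some $p_k$ to the single point $B$. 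So the identification of $B$, and of the starting marked point, with points of the annulus must vary with $\alpha$ — the paper makes exactly the identifications $p_{a+b}\sim M_a$, $B\sim M_{a+b+1}$ (type 1), $p_1\sim M_{a+1}$, $B\sim M_{a+b}$ (type 2), $p_a\sim M_n$, $B\sim M_{a+1}$ (Schur), and its concluding Remarks show that the compatibility of these $\alpha$-dependent identifications for the pair $(\alpha,\beta)$ is precisely the hypothesis $\beta-\alpha\in\{0\}\cup\text{Im}_+$. A fixed covering-map dictionary, besides mismatching endpoints, cannot perform the moves the paper needs (e.g.\ carrying an endpoint over the curve before identifying it); what is actually used is a root-dependent, intersection-preserving deformation, not an unrolling.

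The second, and larger, omission is the intersection bookkeeping itself, which you explicitly defer but which is where essentially all the work lies. The paper decomposes each curve as a spiral plus a tail, $F(\alpha)=\text{Sp}_m\cup H$ and likewise for $\gamma_\alpha^{\text{op}}$, shows that taut representatives can be chosen with $\text{Sp}\cap\text{Sp}=H\cap H=\emptyset$, and counts the two cross terms by explicit bijections onto $\text{Sp}_m\cap H$, giving $2m$ on both sides for non-Schur pairs. For Schur roots your plan would miss a genuine subtlety: $F(\alpha)$ is not the curve $\widehat{F(\alpha)}$ read off from the reduced word but the result of shifting its initial point $m$ places, the resulting count is $\lambda-1$ rather than $\lambda$ (Lemma \ref{lemma} distinguishes these two cases according to the relative position of the initial points), and the hypothesis $\lambda<n$ is used exactly to control how many spirals that shift removes. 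Without carrying out this count — or an equivalent one — the claimed equality is asserted rather than proved.
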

Equation \ref{int3} then establishes the corollary for the case $\beta-\alpha = 0$:

\begin{Thm} For $\alpha \in \text{Re}_{+}$, $\text{Int }(F(\alpha))=\text{Int }(\gamma_\alpha)=\text{dim Ext}^1_{kQ}(M_\alpha,M_\alpha)$.
\end{Thm}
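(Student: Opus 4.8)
The plan is to deduce this statement as the $\beta=\alpha$ specialization of Theorem~\ref{mainthm}, combined with equation~\ref{int3} and the two ``doubling'' identities $\text{Int}(\gamma,\gamma)=2\,\text{Int}(\gamma)$ and $\text{Int}(F,F)=2\,\text{Int}(F)$ recorded just above the statement of Theorem~\ref{mainthm}. First I would verify that the hypotheses of part (2) of Theorem~\ref{mainthm} are satisfied when $\beta=\alpha$, for every $\alpha\in\text{Re}_{+}$. Here the dichotomy between Schur and non-Schur roots matters: if $\alpha$ is non-Schur, then $\beta-\alpha=0\in\{0\}\cup\text{Im}_{+}$; and if $\alpha$ is Schur, then $\beta-\alpha=(0,\dots,0)$ lies in $\{(\lambda,\dots,\lambda)\thinspace|\thinspace 0\le\lambda<n\}$ via $\lambda=0$. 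So in either regime the hypothesis holds, and Theorem~\ref{mainthm} applies with $M_\beta=M_\alpha$.

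Next I would extract from part (2) the first of the asserted equalities, namely the chain $\text{Int}(F(\alpha),F(\alpha))=\text{Int}(\gamma_\alpha,\gamma_\alpha)$. Dividing both sides by $2$ using the two doubling identities then yields $\text{Int}(F(\alpha))=\text{Int}(\gamma_\alpha)$, which is the first equality in the desired statement. Finally, equation~\ref{int3} identifies $\text{Int}(\gamma_\alpha)$ with $\text{dim Ext}^1_{kQ}(M_\alpha,M_\alpha)$, closing the full string of equalities.

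I do not expect a genuine obstacle here, since the result is a formal consequence of the main theorem; the only points requiring care are the case analysis ensuring that the pair $(\alpha,\alpha)$ falls under the covered pairs in both the Schur and non-Schur settings, and the bookkeeping confirming that $\text{Int}(F(\alpha),F(\alpha))=2\,\text{Int}(F(\alpha))$ is the correct reading of the paired self-intersection count. This last identity mirrors equation~\ref{dim2}, where $\text{dim Ext}^1_C(M_\alpha,M_\alpha)=2\,\text{dim Ext}^1_{kQ}(M_\alpha,M_\alpha)$, so the factors of $2$ cancel consistently across the geometric and homological sides. Once these verifications are in place, the three equalities follow at once.
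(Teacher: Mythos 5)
Your proposal is correct and matches the paper's own derivation: the paper obtains this corollary by setting $\beta=\alpha$ in Theorem \ref{mainthm}, invoking the doubling identities $\text{Int}(\gamma_\alpha,\gamma_\alpha)=2\,\text{Int}(\gamma_\alpha)$ and $\text{Int}(F,F)=2\,\text{Int}(F)$ recorded just before that theorem, and concluding with equation \ref{int3}. Your extra care in checking that the pair $(\alpha,\alpha)$ satisfies the hypothesis in both the Schur and non-Schur regimes is exactly the right bookkeeping, though the paper leaves it implicit.
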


Let $Q^{\text{op}}$ denote the quiver all of whose arrows are reversed. Then the marked surface associated to $Q^{\text{op}}$ is $(\mathcal{S},\mathcal{M})^{\text{op}}$:

\begin{center}
\begin{tikzpicture}[scale=0.8]
    \draw (0,0) circle (3cm)
   	circle (.5cm);
   	\draw[black] (0cm,-.5cm) -- (250:3cm)  node[midway,above left, scale=.8]{$l_1$};
   	\draw[dotted] (256:2.5) to[out=-20,in=0](283:2.5);    
   	\draw[black] (0cm,-.5cm) -- (290:3cm)   node[midway,left, scale=.8]{$l_{n-1}$};
 	\draw [black] plot [smooth, tension=.6] coordinates { (270:.5) (290:.6)(310:.7) (330:.85) (350:1) (20:1.2) (40:1.35)(60:1.4)(80:1.4)(100:1.4) (120:1.4)(140:1.4) (160:1.4) (180:1.4) (200:1.45) (220:1.65) (250:3)}; 	 
   	\node[right, scale=.8] at (-2,0){$\thickspace l_n$};  
   	\draw[fill] (0cm,-.5cm) circle(1pt) node[above, scale=.8]{$M_n$}; 	 
   	\draw[fill] (250:3) circle(1pt) node[below, scale=.8]{$M_1$};	
  
   	\draw[fill] (290:3) circle(1pt)node[below, scale=.8]{$M_{n-1}$};
\end{tikzpicture}
\end{center}  

If $\alpha$ is a positive real root of $Q$, and $\gamma_\alpha$ is the curve for $\alpha$ is $(\mathcal{S},\mathcal{M})$, then $\gamma_\alpha^{\text{op}}$  is the curve in $(\mathcal{S},\mathcal{M})^{\text{op}}$ for $\alpha$. It is clear that for any $\alpha$ and $\beta$, $\text{Int}(\gamma_\alpha,\gamma_\beta)=\text{Int}(\gamma_\alpha^{\text{op}},\gamma_\beta^{\text{op}})$. 

We will prove Theorem \ref{mainthm} separately for three disjoint subsets of $\text{Re}_{+}$, whose union is $\text{Re}_{+}$: non-Schur roots of type 1, non-Schur roots of type 2, and Schur roots. In each case, we will first construct $F(\alpha)$ and show that it has the property $(1)$ stated in the theorem. Then, we will describe $\gamma_\alpha^{\text{op}}$ and prove the property $(2)$ by proving that $\text{Int} (F(\alpha),F(\beta))=\text{Int}(\gamma_\alpha^{\text{op}},\gamma_\beta^{\text{op}})$.

\section{Proof of Theorem \ref{mainthm} for Roots of Type 1} Fix $n \geq 3$ and nonnegative integers $m,a,b,c$ such that $a+b+c=n$, $a+c \neq n$, $a\geq 1$, and $c\geq 1$. Then 
\[
\alpha=(m,\dots,m,m+1,\dots,m+1,m,\dots,m)=(m^a,(m+1)^b,m^c)
\]
is a positive real root of type 1 which is not a Schur root, and all non-Schur roots of type 1 are of this form. Note that the first component which is $m+1$ is the $(a+1)$th component and the last component which is $m+1$ is the $(a+b)$th component. 

To construct $F(\alpha)$, go through the steps:
\begin{enumerate}
\item Let $F(\alpha)$ start at $p_{a+b}$ and, going to the right, intersect the rays $r_x$ for $a+b < x < n$. 
\item Going to the left, intersect the rays $r_x$ for $1\leq x\leq n$.
\item Going to the right, intersect the rays $r_x$ for $2\leq x\leq a+b$.
\item Repeat steps (1) through (3) $m-1$ times (for a total of $m$ times).
\item If $a+1 \leq a+b-1$, then intersect the rays $r_x$ for $a+1 \leq x\leq a+b-1$, going to the left. Otherwise, end the curve at $B$ without intersecting any rays.
\end{enumerate}

\begin{Exa} If $\alpha=1221$, then $a=1$, $b=2$, and $c=1$. $F(\alpha)$ is shown below with the different steps labelled.
\vspace{2mm}
\begin{center} 
\begin{tikzpicture}[scale=1.5]  
\fill[Linen] (-1,0) rectangle (7,3);
\draw[-] (-1,0) -- (7,0);
\draw[densely dotted] (0,1.5) -- (0,3) node[below left, scale=.8]{$r_1$}; 
\draw[densely dotted] (2,1.5) -- (2,3) node[below left, scale=.8]{$r_2$}; 
\draw[densely dotted] (4,1.5) -- (4,3) node[below left, scale=.8]{$r_3$}; 
\draw[densely dotted] (6,1.5) -- (6,3) node[below left, scale=.8]{$r_4$}; 
\draw [DarkRed,dash pattern = on 3mm off .5mm, line width=.5mm] plot [smooth, tension=0.6] coordinates {(4, 1.5) (5,1.2) (5.7, 1) (6.2, 1.1) (6.4, 1.4) (6.3, 1.8) (6,2) (5,2.2) (3.5,2.3)(2,2.3) (1,2.2) (0,1.9)};
\node[scale=0.8] at (5,2.4) {$(2)$};
\draw [DarkRed,densely dashed, line width=.5mm] plot [smooth, tension=0.6] coordinates {(0,1.9) (-.4,1.6) (-.45,1) (.2,.9) (1,1.6) (2,2) (3,2)(4,1.9)};
\node[scale=0.8] at (3.5,1.8) {$(3)$};  
\draw [DarkRed,  line width=.5mm] plot [smooth, tension=0.6] coordinates {(4,1.9) (4.5,1.8) (4.7,1.4)(4.5,1) (3.5,1) (2.7,1.4) (2,1.7) (1.6,1.6) (1.4,1.3) (1.6, .8) (3,0)};
\node[scale=0.8] at (1.3,.9) {$(5)$}; 
\draw[fill] (3,0) circle(1 pt) node[below, scale=.8] {$B$}; 
\draw[fill] (0,1.5) circle(1 pt) node[below, scale=.8] {$p_1$}; 
\draw[fill] (2,1.5) circle(1 pt) node[below, scale=.8] {$p_2$};
\draw[fill] (4,1.5) circle(1 pt) node[below, scale=.8] {$p_3$};
\draw[fill] (6,1.5) circle(1 pt) node[below, scale=.8] {$p_4$};
\draw[fill] (3,0) circle(1 pt) node[below, scale=.8] {$B$};  
\end{tikzpicture}
\end{center}  
If $\alpha=11211$, then $F(\alpha)$ is:
\vspace{2mm}
\begin{center} 
\begin{tikzpicture}[scale=1.5]
\fill[Linen] (-1,0) rectangle (9,3);
\draw[-] (-1,0) -- (9,0);
\draw[densely dotted] (0,1.5) -- (0,3) node[below left, scale=.8]{$r_1$}; 
\draw[densely dotted] (2,1.5) -- (2,3) node[below left, scale=.8]{$r_2$}; 
\draw[densely dotted] (4,1.5) -- (4,3) node[below left, scale=.8]{$r_3$}; 
\draw[densely dotted] (6,1.5) -- (6,3) node[below left, scale=.8]{$r_4$}; 
\draw[densely dotted] (8,1.5) -- (8,3) node[below left, scale=.8]{$r_5$}; 
\draw [DarkRed, loosely dashed, line width=.5mm] plot [smooth, tension=0.6] coordinates {(4, 1.5) (5,1.7) (6,1.8) (6.8, 1.5)(7.4,1.2)(7.4,1.2)  (7.9,1)(8.3,1.15) (8.4,1.5)(8,1.8)} ;
\node[scale=0.8] at (5,1.5) {$(1)$};  
\draw [DarkRed, dash pattern = on 3mm off .5mm, line width=.5mm] plot [smooth, tension=0.6] coordinates {  (8,1.8)(7,2) (5,2.2) (3,2.2)(1,2) (0,1.7)} ;
\node[scale=0.8] at (7.2,2.15) {$(2)$};  
\draw [DarkRed, densely dashed, line width=.5mm] plot [smooth, tension=0.6] coordinates {(0,1.7)(-.2,1.5)(-.3,1.1)(.1,.8)(.7,1) (1.3,1.35)(2,1.7)(3,1.9)(4,1.9)} ; 
\draw [DarkRed,line width=.5mm] plot [smooth, tension=0.6] coordinates {(4,1.9)(4.6,1.5)(4.2,.7) (4,0) };
\node[scale=0.8] at (3,1.7) {$(3)$};  
\node[scale=0.8] at (4,.6) {$(5)$};  
\draw[fill] (4,0) circle(1 pt) node[below, scale=.8] {$B$}; 
\draw[fill] (0,1.5) circle(1 pt) node[below, scale=.8] {$p_1$}; 
\draw[fill] (2,1.5) circle(1 pt) node[below, scale=.8] {$p_2$};
\draw[fill] (4,1.5) circle(1 pt) node[below, scale=.8] {$p_3$};
\draw[fill] (6,1.5) circle(1 pt) node[below, scale=.8] {$p_4$};
\draw[fill] (8,1.5) circle(1 pt) node[below, scale=.8] {$p_5$};  
\end{tikzpicture}
\end{center}
\end{Exa}

\begin{Thm}[\ref{mainthm}(1)] Fix $n \geq 3$ and nonnegative integers $m,a,b,c$ such that $a+b+c=n$, $a+c \neq n$, $a\geq 1$, and $c\geq 1$, and consider the root $\alpha=(m^a,(m+1)^b,m^c)$. Then $R(F(\alpha))=\alpha$. 
\end{Thm}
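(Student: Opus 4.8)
The plan is to convert the geometric claim $R(F(\alpha))=\alpha$ into an algebraic identity in the root lattice and verify it by a single structured computation. Specifying an admissible curve is the same as specifying its starting point $p_k$ together with the ordered list of rays it crosses, and if the curve starts at $p_k$ and meets $r_{i_j},r_{i_{j-1}},\dots,r_{i_1}$ in that order as it runs toward $B$, then $R$ of the curve is $s_{i_1}\cdots s_{i_j}(\alpha_k)$. Since $F(\alpha)$ starts at $p_{a+b}$, proving the theorem reduces to showing that applying the simple reflections in the order the rays are crossed, beginning from the simple root $\alpha_{a+b}$, returns $\alpha$. Throughout I will track a vector $\mathbf{v}\in\mathbb{Z}^n$, initialized at $\alpha_{a+b}$, and update it using the rule recorded earlier: $s_i$ changes only the $i$th entry, sending $v^i$ to $v^{i-1}+v^{i+1}-v^i$, with indices read cyclically modulo $n$ (so $v^0=v^n$ and $v^{n+1}=v^1$).

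Writing $p:=a+b$, one pass through steps (1)--(3) crosses, in order, the rays $r_{p+1},\dots,r_{n-1}$, then $r_n,r_{n-1},\dots,r_1$, then $r_2,r_3,\dots,r_p$; this pass is performed $m$ times, after which step (5) crosses $r_{p-1},r_{p-2},\dots,r_{a+1}$. (Step (1) is vacuous exactly when $c=1$, and step (5) is vacuous exactly when $b=1$; in both cases the argument below degenerates harmlessly.) The heart of the proof is an induction on the number of completed passes, with the invariant that after $t$ passes $\mathbf{v}=\mathbf{v}_t$, where $\mathbf{v}_t$ has every entry equal to $t$ except the entry in position $p$, which is $t+1$; the base case $t=0$ is $\mathbf{v}_0=\alpha_{a+b}$. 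For the inductive step I will apply the three sub-steps to $\mathbf{v}_t$ and read off the outcome coordinate by coordinate: step (1) propagates the value $t+1$ rightward so that positions $p,\dots,n-1$ become $t+1$; step (2) first raises position $n$ to $t+1$ (here the cyclic identification $v^0=v^n$ is used when $s_1$ is applied), then lowers position $p$ back to $t$, and finally raises position $1$ to $t+1$; and step (3) raises positions $2,\dots,p-1$ to $t+1$ and, at the final reflection $s_p$, raises position $p$ to $t+2$ because both of its neighbors are now $t+1$. The net result is $\mathbf{v}_{t+1}$, completing the induction.

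After the $m$ passes the vector is $\mathbf{v}_m$, equal to $m$ in every coordinate except $m+1$ in position $p=a+b$. Finally, step (5) applies $s_{p-1},s_{p-2},\dots,s_{a+1}$ in that order; each reflection acts on a coordinate whose right-hand neighbor is $m+1$ at that moment, so it raises that coordinate from $m$ to $m+1$, and the cumulative effect is to make positions $a+1,\dots,a+b$ all equal to $m+1$ while leaving positions $1,\dots,a$ and $a+b+1,\dots,n$ equal to $m$. This is exactly $\alpha=(m^a,(m+1)^b,m^c)$, so $R(F(\alpha))=\alpha$.

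The step I expect to require the most care is the inductive computation for one pass, and within it the correct handling of the affine (cyclic) index convention. The wrap-around is not a technicality to be swept aside: it is precisely what makes $s_1$ raise position $1$ during step (2), and without it the pattern would not regenerate itself as $\mathbf{v}_{t+1}$. I will also state the boundary behavior at the two vacuous cases ($c=1$ and $b=1$) explicitly, and verify that the propagation arguments remain valid at the endpoints $p=2$ and $p=n-1$, so that the coordinate-by-coordinate bookkeeping is airtight.
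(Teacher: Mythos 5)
Your proposal is correct and follows essentially the same route as the paper: both translate the rays crossed in steps (1)--(5) into the corresponding sequence of simple reflections and track the resulting root, with the intermediate values $\sum_{j=a+b}^{n-1}\alpha_j$, then $\alpha_1+\sum_{j=a+b+1}^{n}\alpha_j$, then $(t,\dots,t)+\alpha_{a+b}$ after each full pass, exactly as in the paper's proof. Your explicit induction on the number of passes and the careful treatment of the cyclic index at $s_1$ and $s_n$ merely make explicit what the paper's step (4) asserts, so there is no substantive difference.
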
 

\begin{proof} Recall that the $i$th reflection $s_i$ acts on the root $\mathbf{v}=(v^1,\dots,v^{i-1},v^i,v^{i+1},\dots,v^n)$ by $s_i(\mathbf{v})=(v^1,\dots,v^{i-1}, v^{i-1}+v^{i+1}-v^i,v^{i+1},\dots,v^n)$. It follows that the steps to construct $F(\alpha)$ correspond to the following steps to construct the root $R(F(\alpha))$:
\begin{enumerate}
\item Apply the reflection $s_{n-1}\cdots s_{a+b+1}$ to the simple root $\alpha_{a+b}$. The result is the root $\sum_{j=a+b}^{n-1} \alpha_j$.
\item Now apply the reflection $s_1\cdots s_n$. The result is $\alpha_1+\sum_{j=a+b+1}^{n} \alpha_j$.
\item Now apply the reflection $s_{a+b}\cdots s_2$. The result is the root $(1,\dots,1)+\alpha_{a+b}$.
\item The result is $(m,\dots,m)+\alpha_{a+b}$.
\item If $a+1\leq a+b-1$, apply the reflection $s_{a+1}\cdots s_{a+b-1}$. The result now is $(m,\dots,m)+\sum_{j=a+1}^{a+b} \alpha_j$, which is exactly $\alpha$. Otherwise, $a+1>a+b-1$ which happens if and only if $b=1$. Therefore, the root constructed by step $(4)$ is $(m,\dots,m)+\alpha_{a+b}=(m,\dots,m)+\alpha_{a+1}$, which is indeed $\alpha$ when $b=1$.
\end{enumerate}

\end{proof}

Now, the curve $\gamma_\alpha^{\text{op}}$ in $(\mathcal{S},\mathcal{M})^{\text{op}}$ is obtained through the steps:

\begin{enumerate} 
\item Let $\gamma_\alpha^{\text{op}}$ start at $M_a$ and, going counter-clockwise around the inner boundary, intersect $l_a$ in $m$ points not including $M_a$.
\item End at $M_{a+b+1}$. (If $a+b+1=n$, end at $M_1$.)
\end{enumerate}

\begin{Exa} Let $\alpha=121$. The different steps are labelled and correspond to solid and dashed curve segments.
\vspace{2mm}
\begin{center}
\begin{tikzpicture}[scale=1]
   \draw (0,0) circle (3cm)
   	circle (.5cm);
   	\draw[black] (0cm,-.5cm) -- (250:3cm)  node[midway,above left, scale=.8 ]{$l_1$};
   	\draw[black] (0cm,-.5cm) -- (290:3cm) node[midway, left, scale=.8]{$l_2$};
   	\draw [black] plot [smooth, tension=.6] coordinates { (270:.5) (290:.6)(310:.7) (330:.85) (350:1) (20:1.2) (40:1.35)(60:1.4)(80:1.4)(100:1.4) (120:1.4)(140:1.4) (160:1.4) (180:1.4) (200:1.45) (220:1.65) (250:3)}; 	 
   	\node[right, scale=.8] at (-1.9,0){$\thickspace l_3$}; 
  \draw [DarkRed, line width=0.5mm] plot [smooth, tension=0.6] coordinates { (250:3) (270:2.7) (290:2.55) (310:2.55) (330:2.55) (350:2.55) (10:2.55) (30:2.55) (50:2.55) (70:2.55) (90:2.55) (110:2.55) (130:2.55) (150:2.55) (170:2.55) (190:2.55) (210:2.45) (230:2.4) (251:2.3)};  
  \draw [DarkRed, densely dashed, line width=0.5mm] plot [smooth, tension=0.6] coordinates { (251:2.3) (270:2.2)(290:2.1) (310:2.1) (330:2.1) (350:2.1) (10:2.1)(30:2.1)(50:2.1) (70:2.1)(90:2.1)(110:2.1)(130:2.1)(150:2.1) (170:2.1) (190:2.1) (220:2.2 ) (250:3) };
 \node[scale=0.8] at (2.5,1.2) {$(1)$}; 
   \node[scale=0.8] at (1.5,1) {$(2)$}; 
   	\draw[fill] (0cm,-.5cm) circle(1pt) node[above, scale=.8]{$M_3$};
   	\draw[fill] (250:3) circle(1pt) node[below, scale=.8]{$M_1$};		
   	\draw[fill] (290:3) circle(1pt)node[below, scale=.8]{$M_2$};	
\end{tikzpicture}
\end{center}
\end{Exa}

We now give another description of $\gamma_\alpha^{\text{op}}$ which will be used to prove Theorem \ref{type12}. Let $m>0$. Choose a point $w_i$ in $l_i - \{M_n,M_i\}$. Define $\text{Sp}_m (M_i,w_i)$ to be the curve segment (the ``spiral") with initial point $M_i$ and terminal point $w_i$ which, going counter-clockwise around the inner boundary, intersects $l_i$ in $m$ points not including $M_i$ (and intersects itself zero times). Illustrated below for $m=3$:

\begin{center}
\begin{tikzpicture}[scale=1]
\draw (0,0) circle (3cm)
   	circle (.5cm); 
\draw [DarkRed, line width=.5mm] plot [smooth, tension=0.6] coordinates { (250:3) (270:2.7) (290:2.65) (310:2.65) (330:2.65) (350:2.65) (10:2.65) (30:2.65) (50:2.65) (70:2.65) (90:2.65) (110:2.65) (130:2.65) (150:2.65) (170:2.65)(190:2.6) (210:2.55) (230:2.5) (250:2.45) (270:2.4) (290:2.35) (310:2.3) (330:2.25) (350:2.25) (10:2.25) (30:2.25) (50:2.25)(70:2.25)(90:2.25)(110:2.25)(130:2.25) (150:2.25) (170:2.25) (190:2.2) (210:2.15) (230:2.1)  (251:2.05) (270:2)(290:1.95) (310:1.95) (330:1.95) (350:1.95) (10:1.95)(30:1.95)(50:1.95) (70:1.95) (90:1.95)(110:1.95)(130:1.95) (150:1.95) (170:1.95)(190:1.9)(210:1.85)(230:1.8)(252:1.75)}; 
 \draw[fill] (250:3) circle(1pt) node[below, scale=1]{$M_i$};		
 \draw[black] (0cm,-.5cm) -- (250:3cm);
 \draw[fill] (252:1.75)circle(1pt) node[right, scale=1]{$w_i$};
 \draw[fill] (0cm,-.5cm) circle(1pt) node[above, scale=1]{$M_n$}; 
  \node at (-.4,1.2) {$\text{Sp}_3(M_i,w_i)$}; 
\end{tikzpicture}
\end{center}
  
Choose a point $q_i$ in $l_i - \{M_n\}$. Define $H(q_i,M_j)$ to be the curve segment which starts at $q_i$, goes counter-clockwise (possibly around the inner boundary), and ends at $M_j$. 

\begin{center}  
\begin{tikzpicture}[scale=.9]
    \draw (0,0) circle (3cm)
   	circle (.5cm);
   	\draw[black] (0cm,-.5cm) -- (250:3cm);
   	\draw[DarkRed, densely dashed, line width=.5mm] plot[smooth, tension=0.6] coordinates {(288:1.65)(310:1.65) (330:1.65)(350:1.65)  (10:1.65)(30:1.65) (50:1.65)(70:1.65)(90:1.65) (110:1.65)(130:1.65) (150:1.65)(170:1.65)(190:1.65) (210:1.7)(230:2) (250:3)} ; 
   	 	\draw[fill] (250:3) circle(1pt) node[below, scale=1]{$M_j$};		
   	\draw[fill] (287:1.65) circle(1pt)node[left, scale=1]{$q_i$};	
   	\draw[black] (0cm,-.5cm) -- (290:3cm);
   	\draw[fill] (290:3) circle(1pt) node[below, scale=1]{$M_i$};
   	\draw[fill] (0cm,-.5cm) circle(1pt) node[above, scale=1]{$M_n$};
   	 \node at (-.4,1) {$H(q_i,M_j)$}; 
\end{tikzpicture}
\end{center}

If $\alpha=(m^a,(m+1)^b,m^c)$ for $m>0$, then it is clear that $\gamma_\alpha^{\text{op}}=\text{Sp}_m(M_a,w_a)\cup H(w_a,M_{a+b+1})$ for some $w_a \in l_a-\{M_n,M_a\}$.  (Replace $M_{a+b+1}$ with $M_1$ in case $a+b+1=n$.) Moreover, if $\beta=\alpha+(\lambda,\dots,\lambda)$ for $\lambda \geq 0$, then $\gamma_\beta^{\text{op}}=\text{Sp}_{m+\lambda}(M_a,q_a)\cup H(q_a,M_{a+b+1})$ for some $q_a\in l_a-\{M_n,M_a\}$. If $m=0$, then we will say that $\gamma_\alpha^{\text{op}}=\text{Sp}_0(M_a,M_a)\cup H(M_a,M_{a+b+1})=H(M_a,M_{a+b+1})$. Here, $\text{Sp}_0(M_a,M_a)$ is degenerate and should be understood as the point $M_a$. Extend the definition of Int to apply to curve segments. With the understanding that $m$ may be zero, in which case $w_a$ should be replaced with $M_a$, note that $\text{Int}(\gamma_\alpha^{\text{op}})=\text{Int}(\text{Sp}_m(M_a,w_a),H(w_a,M_{a+b+1}))=m$. This holds whether $a+b+1<n$ or $a+b+1=n$ (see Example \ref{exa0}).

This description of curves in terms of spirals helps to understand the intersections occurring between two curves. Let $m>0$. Then
\begin{multline*} 
\gamma_\alpha^{\text{op}} \cap\gamma_\beta^{\text{op}}=(\text{Sp}_m(M_a,w_a)\cap\text{Sp}_{m+\lambda}(M_a,q_a)) \cup (\text{Sp}_m(M_a,w_a)\cap H(q_a,M_{a+b+1})) \\
\cup (H(w_a,M_{a+b+1}) \cap \text{Sp}_{m+\lambda}(M_a,q_a)) \cup (H(w_a,M_{a+b+1}) \cap H(q_a,M_{a+b+1})).
\end{multline*}
Therefore,
\begin{multline*} 
\text{Int}(\gamma_\alpha^{\text{op}},\gamma_\beta^{\text{op}})=\text{Int}(\text{Sp}_m(M_a,w_a),\text{Sp}_{m+\lambda}(M_a,q_a))+\text{Int}(\text{Sp}_m(M_a,w_a),H(q_a,M_{a+b+1})) \\
+\text{Int}(H(w_a,M_{a+b+1}),\text{Sp}_{m+\lambda}(M_a,q_a))+\text{Int}(H(w_a,M_{a+b+1}),H(q_a,M_{a+b+1})). \\
\end{multline*}

When drawing $\gamma_\alpha^{\text{op}} \cup \gamma_\beta^{\text{op}}$, it is always possible to choose $q_a$ (strictly) between $w_a$ and $M_n$, and to choose $w_a$ between the $(m-1)$th and the $m$th intersection of $\gamma_\beta^{\text{op}}$ with $l_a$ (such as in Example \ref{exa1}). We will always assume $\gamma_\alpha^{\text{op}}$ and $\gamma_\beta^{\text{op}}$ are drawn in this way, since any other way introduces additional pairwise intersections. Indeed, when drawn in this way,
\[\text{Sp}_m(M_a,w_a)\cap\text{Sp}_{m+\lambda}(M_a,q_a)=H(w_a,M_{a+b+1}) \cap H(q_a,M_{a+b+1})=\emptyset\]
(see Example \ref{exa1}). Therefore, to determine $\text{Int}(\gamma_\alpha^{\text{op}},\gamma_\beta^{\text{op}})$, we need only compute $\text{Int}(\text{Sp}_m(M_a,w_a),H(q_a,M_{a+b+1}))$ and $\text{Int}(H(w_a,M_{a+b+1}),\text{Sp}_{m+\lambda}(M_a,q_a))$.

In the case $m=0$, any choice of $q_a \in l_a-\{M_n,M_a\}$ will result in the same number of pairwise intersections between $\gamma_\alpha^{\text{op}}$ and $\gamma_\beta^{\text{op}}$, zero.

\begin{Exa}\label{exa0} The solid curve is $\text{Sp}_m(M_a,w_a)$ where $m=3$. If $a+b+1<n$, then $M_{a+b+1}$ lies strictly to the right of $M_a$. If $a+b+1=n$, then $M_1$ lies to the left of $M_a$ or $M_1 = M_a$. The two dashed curve segments are the two options for $H(w_a,M_{a+b+1})$.
\begin{center}
\begin{tikzpicture}[scale=1]
\draw (0,0) circle (3cm)
   	circle (.5cm); 
\draw [DarkRed, line width=.5mm] plot [smooth, tension=0.6] coordinates { (250:3) (270:2.7) (290:2.65) (310:2.65) (330:2.65) (350:2.65) (10:2.65) (30:2.65) (50:2.65) (70:2.65) (90:2.65) (110:2.65) (130:2.65) (150:2.65) (170:2.65)(190:2.6) (210:2.55) (230:2.5) (250:2.45) (270:2.4) (290:2.35) (310:2.3) (330:2.25) (350:2.25) (10:2.25) (30:2.25) (50:2.25)(70:2.25)(90:2.25)(110:2.25)(130:2.25) (150:2.25) (170:2.25) (190:2.2) (210:2.15) (230:2.1)  (251:2.05) (270:2)(290:1.95) (310:1.95) (330:1.95) (350:1.95) (10:1.95)(30:1.95)(50:1.95) (70:1.95) (90:1.95)(110:1.95)(130:1.95) (150:1.95) (170:1.95)(190:1.9)(210:1.85)(230:1.8)(252:1.75)}; 
 \draw[fill] (250:3) circle(1pt) node[below, scale=1]{$M_a$};		
 \draw[black] (0cm,-.5cm) -- (250:3cm);
 \draw[fill] (252:1.75)circle(1pt) node[above right, scale=1]{$w_a$};
 \draw[fill] (0cm,-.5cm) circle(1pt) node[above, scale=1]{$M_n$}; 
  
	\draw[DarkRed, densely dashed, line width=.5mm] plot[smooth, tension=0.6] coordinates {(252:1.75)(270:1.7)(288:1.65)(310:1.65) (330:1.65)(350:1.65)  (10:1.65)(30:1.65) (50:1.65)(70:1.65)(90:1.65) (110:1.65)(130:1.65) (150:1.65)(170:1.65)(190:1.65) (200:1.7)(240:3)} ;  
	\draw[DarkRed, densely dashed, line width=.5mm] plot[smooth, tension=0.6] coordinates {(252:1.75)(290:3)};
   	\draw[fill] (290:3) circle(1pt) node[below, scale=1]{$M_{a+b+1}$};	
   	\draw[fill] (240:3) circle(1pt) node[below, scale=1]{$M_1$};
\end{tikzpicture}
\end{center}
\end{Exa}

 \begin{Exa}\label{exa1} Let $\alpha=121$ and $\beta=343=\alpha+222$. In this example, $m=1$, $\lambda=2$, $n=3$, $a=1$, and $b=1$. $\gamma_\alpha^{\text{op}}=\text{Sp}_1(M_1,w_1) \cup H(w_1,M_1)$ and $\gamma_\beta^{\text{op}}=\text{Sp}_3(M_1,q_1) \cup H(q_1,M_1)$ are shown. Note that $w_1$ is between the $0$th and $1$st intersection of $\gamma_\beta^{\text{op}}$ with $l_1$, which enables $\gamma_\alpha^{\text{op}}$ and $\gamma_\beta^{\text{op}}$ to be drawn in such a way that results in the fewest possible pairwise intersections.  The elements of $\gamma_\alpha^{\text{op}} \cap \gamma_\beta^{\text{op}}$ are highlighted.
 \begin{center}    
\begin{tikzpicture}[scale=1.2] 
    \draw (0,0) circle (3cm)
   	circle (.5cm);
   	\draw[dotted] (0cm,-.5cm) -- (250:3cm)  ;
   	\draw[dotted] (0cm,-.5cm) -- (290:3cm) ;
   	\draw [dotted] plot [smooth, tension=.6] coordinates { (270:.5) (290:.6)(310:.7) (330:.8) (350:.9) (20:1) (40:1)(60:1)(80:1)(100:1) (120:1)(140:1) (160:1.0) (180:1.05) (200:1.17) (220:1.45) (250:3)}; 	
   	\draw[DarkRed, densely dashed, line width=.5mm] plot[smooth, tension=0.6] coordinates {(250:2.4)(270:2.3)(290:2.25) (310:2.2) (330:2.2) (350:2.2) (10:2.2)(30:2.2)(50:2.2) (70:2.2)(90:2.2)(110:2.2)(130:2.2)(150:2.2) (170:2.2) (190:2.2) (210:2.22 ) (250:3)};
 \draw [DarkRed,  line width=0.5mm] plot [smooth, tension=0.6] coordinates { (250:3) (270:2.8) (290:2.8) (310:2.8) (330:2.8) (350:2.8) (10:2.8) (30:2.8) (50:2.8) (70:2.8) (90:2.8) (110:2.8) (130:2.8) (150:2.8) (170:2.75) (190:2.7) (210:2.6) (230:2.5)  (250:2.4)}; 
 \draw[fill] (251:2.4) circle(1pt) node[above, scale=1]{$w_1$};
 \node at (-1.5,0) {$H(w_1,M_1)$}; 
  \node at (-.5,2.4) {$\text{Sp}_1(M_1,w_1)$}; 
  \draw[fill] (0cm,-.5cm) circle(1pt) node[above, scale=.8]{$M_3$};
   	\draw[fill] (250:3) circle(1pt) node[below, scale=.8]{$M_1$};		
   	\draw[fill] (290:3) circle(1pt)node[below, scale=.8]{$M_2$};	
   	\node[scale=1.5] at (-0,-3.5 ) {$\gamma_{121}^{\text{op}}$}; 
\end{tikzpicture}
\hspace{1mm}
\begin{tikzpicture}[scale=1.2]
    \draw (0,0) circle (3cm)
   	circle (.5cm);
   	\draw[dotted] (0cm,-.5cm) -- (250:3cm)  ;
   	\draw[dotted] (0cm,-.5cm) -- (290:3cm) ;
   	\draw [dotted] plot [smooth, tension=.6] coordinates { (270:.5) (290:.6)(310:.7) (330:.8) (350:.9) (20:1) (40:1)(60:1)(80:1)(100:1) (120:1)(140:1) (160:1.0) (180:1.05) (200:1.17) (220:1.45) (250:3)}; 	   		
   	\draw [DarkRed, line width=.5mm] plot [smooth, tension=0.6] coordinates { (250:3) (270:2.7)  (290:2.5) (310:2.45) (330:2.45) (350:2.45) (10:2.45)(30:2.45)(50:2.45) (70:2.45)(90:2.45)(110:2.45)(130:2.45)(150:2.45) (170:2.4) (190:2.4)(210:2.35) (230:2.25)(250:2.15)(270:2) (290:1.9) (310:1.85) (330:1.85) (350:1.85)(10:1.85) (30:1.85) (50:1.85)(70:1.85)(90:1.85)(110:1.85)(130:1.85) (150:1.85) (170:1.85) (190:1.8) (210:1.75) (230:1.7)  (251:1.65) 
(270:1.6)(290:1.55) (310:1.55) (330:1.55) (350:1.55) (10:1.55)(30:1.55)(50:1.55) (70:1.55) (90:1.55)(110:1.55)(130:1.55) (150:1.55) (170:1.55)(190:1.5)(210:1.45)(230:1.4)(252:1.35)}; 
   	\draw[DarkRed, densely dashed, line width=.5mm] plot[smooth, tension=0.6] coordinates {(252:1.35)(270:1.3)(290:1.3)(310:1.3) (330:1.3)(350:1.3)  (10:1.3)(30:1.3) (50:1.3)(70:1.3)(90:1.3) (110:1.3)(130:1.3) (150:1.3)(170:1.3)(190:1.35) (210:1.5)(230:1.9) (250:3)} ; 
\draw[fill] (254:1.35) circle(1pt) node[above, scale=1]{$q_1$};
 \node at (0,1) {$H(q_1,M_1)$}; 
  \node at (0,2.6) {$\text{Sp}_3(M_1,q_1)$}; 
  	\draw[fill] (0cm,-.5cm) circle(1pt) node[above, scale=.8]{$M_3$};
   	\draw[fill] (250:3) circle(1pt) node[below, scale=.8]{$M_1$};		
   	\draw[fill] (290:3) circle(1pt)node[below, scale=.8]{$M_2$};	
   	 	\node[scale=1.5] at (-0,-3.5 ) {$\gamma_{343}^{\text{op}}$}; 
\end{tikzpicture}
\end{center}
\vspace{3mm}
\begin{center}      
\begin{tikzpicture}[scale=1.2]
    \draw (0,0) circle (3cm)
   	circle (.5cm);
   	\draw[dotted] (0cm,-.5cm) -- (250:3cm)  ;
   	\draw[dotted] (0cm,-.5cm) -- (290:3cm) ;
   	\draw [dotted] plot [smooth, tension=.6 ] coordinates { (270:.5) (290:.6)(310:.7) (330:.8) (350:.9) (20:1) (40:1)(60:1)(80:1)(100:1) (120:1)(140:1) (160:1.0) (180:1.05) (200:1.17) (220:1.45) (250:3)}; 
   	 \draw [DarkRed, line width=.5mm] plot [smooth, tension=0.6] coordinates { (250:3) (270:2.8) (290:2.8) (310:2.8) (330:2.8) (350:2.8) (10:2.8) (30:2.8) (50:2.8) (70:2.8) (90:2.8) (110:2.8) (130:2.8) (150:2.8) (170:2.75) (190:2.7) (210:2.6) (230:2.5)  (250:2.4)}; 	
   	\draw[DarkRed,densely dashed, line width=.5mm] plot[smooth, tension=0.6] coordinates {(250:2.4)(270:2.3)(290:2.25) (310:2.2) (330:2.2) (350:2.2) (10:2.2)(30:2.2)(50:2.2) (70:2.2)(90:2.2)(110:2.2)(130:2.2)(150:2.2) (170:2.2) (190:2.2) (210:2.22 ) (250:3)};
   	\draw [DarkRed, line width=.5mm] plot [smooth, tension=0.6] coordinates { (250:3) (270:2.7)  (290:2.5) (310:2.45) (330:2.45) (350:2.45) (10:2.45)(30:2.45)(50:2.45) (70:2.45)(90:2.45)(110:2.45)(130:2.45)(150:2.45) (170:2.4) (190:2.4)(210:2.35) (230:2.25)(250:2.15)(270:2) (290:1.9) (310:1.85) (330:1.85) (350:1.85)(10:1.85) (30:1.85) (50:1.85)(70:1.85)(90:1.85)(110:1.85)(130:1.85) (150:1.85) (170:1.85) (190:1.8) (210:1.75) (230:1.7)  (251:1.65) 
(270:1.6)(290:1.55) (310:1.55) (330:1.55) (350:1.55) (10:1.55)(30:1.55)(50:1.55) (70:1.55) (90:1.55)(110:1.55)(130:1.55) (150:1.55) (170:1.55)(190:1.5)(210:1.45)(230:1.4)(252:1.35)}; 
   	\draw[DarkRed, densely dashed, line width=.5mm] plot[smooth, tension=0.6] coordinates {(252:1.35)(270:1.3)(290:1.3)(310:1.3) (330:1.3)(350:1.3)  (10:1.3)(30:1.3) (50:1.3)(70:1.3)(90:1.3) (110:1.3)(130:1.3) (150:1.3)(170:1.3)(190:1.35) (210:1.5)(230:1.9) (250:3)} ; 
 \draw[fill] (254:1.35) circle(1pt) node[above, scale=1]{$q_1$};
 \draw[fill] (250:2.4) circle(1pt) node[below, scale=1]{$w_1$};
 \draw[fill=yellow, opacity=0.5](-1.1,-2.15 ) circle(4 pt);  
 \draw[fill=yellow, opacity=0.5](-1.7,-1.55 ) circle(4 pt);  
 	\draw[fill] (0cm,-.5cm) circle(1pt) node[above, scale=1]{$M_3$};
   	\draw[fill] (250:3) circle(1pt) node[below, scale=1]{$M_1$};		
   	\draw[fill] (290:3) circle(1pt)node[below, scale=1]{$M_2$};
   \node[scale=1.5] at (-0,-3.7 ) {$\gamma_{121}^{\text{op}} \cup \gamma_{343}^{\text{op}}$}; 
\end{tikzpicture}
\hspace{1mm}
\begin{tikzpicture}[scale=2.5]
\clip (-2,-3.3) rectangle (0.05,-.6);
\draw (0,0) circle (3cm)
   	circle (.5cm);
\draw[dotted] (0cm,-.5cm) -- (250:3cm)  ;
\draw[dotted] (0cm,-.5cm) -- (290:3cm) ;	
\draw [dotted] plot [smooth, tension=.8] coordinates { (270:.5) (290:.6)(310:.7) (330:.8) (350:.9) (20:1) (40:1)(60:1)(80:1)(100:1) (120:1)(140:1) (160:1.0) (180:1.05) (200:1.17) (220:1.45) (250:3)}; 
 \draw [DarkRed,line width=0.5mm] plot [smooth, tension=0.6] coordinates { (250:3) (270:2.8) (290:2.8) (310:2.8) (330:2.8) (350:2.8) (10:2.8) (30:2.8) (50:2.8) (70:2.8) (90:2.8) (110:2.8) (130:2.8) (150:2.8) (170:2.75) (190:2.7) (210:2.6) (230:2.5)  (250:2.4)}; 
 \draw[DarkRed, densely dashed, line width=.5mm] plot[smooth, tension=0.6] coordinates {(250:2.4)(270:2.3)(290:2.25) (310:2.2) (330:2.2) (350:2.2) (10:2.2)(30:2.2)(50:2.2) (70:2.2)(90:2.2)(110:2.2)(130:2.2)(150:2.2) (170:2.2) (190:2.2) (210:2.22 ) (250:3)};	
\draw [DarkRed, line width=.5mm] plot [smooth, tension=0.6 ] coordinates { (250:3) (270:2.7)  (290:2.5) (310:2.45) (330:2.45) (350:2.45) (10:2.45)(30:2.45)(50:2.45) (70:2.45)(90:2.45)(110:2.45)(130:2.45)(150:2.45) (170:2.4) (190:2.4)(210:2.35) (230:2.25)(250:2.15)(270:2) (290:1.9) (310:1.85) (330:1.85) (350:1.85)(10:1.85) (30:1.85) (50:1.85)(70:1.85)(90:1.85)(110:1.85)(130:1.85) (150:1.85) (170:1.85) (190:1.8) (210:1.75) (230:1.7)  (251:1.65) 
(270:1.6)(290:1.55) (310:1.55) (330:1.55) (350:1.55) (10:1.55)(30:1.55)(50:1.55) (70:1.55) (90:1.55)(110:1.55)(130:1.55) (150:1.55) (170:1.55)(190:1.5)(210:1.45)(230:1.4)(254:1.35)}; 
   	\draw[DarkRed,densely dashed, line width=.5mm] plot[smooth, tension=0.6] coordinates {(254:1.35)(270:1.3)(290:1.3)(310:1.3) (330:1.3)(350:1.3)  (10:1.3)(30:1.3) (50:1.3)(70:1.3)(90:1.3) (110:1.3)(130:1.3) (150:1.3)(170:1.3)(190:1.35) (210:1.5)(230:1.9) (250:3)} ; 
 \draw[fill] (254.8:1.35) circle(1pt) node[above, scale=1]{$q_1$};
 \draw[fill] (251:2.4) circle(1pt) node[below, scale=1]{$w_1$};
 \draw[fill=yellow, opacity=0.5](-1.1,-2.15 ) circle(4 pt);  
 \draw[fill=yellow, opacity=0.5](-1.7,-1.55 ) circle(4 pt);  
 \draw[fill] (0cm,-.5cm) circle(1pt) node[above, scale=1]{$M_3$};
   	\draw[fill] (250:3) circle(1pt) node[below, scale=1]{$M_1$};		
   	\draw[fill] (290:3) circle(1pt)node[below, scale=1]{$M_2$};	
\end{tikzpicture}
\end{center} 
\end{Exa}

We also present an analogous description of $F(\alpha)$ in terms of spirals which will be used in the proof of Theorem \ref{type12}. Let $\alpha=(m^a,(m+1)^b,m^c)$ and $\beta=\alpha+(\lambda,\dots,\lambda)$, for $\lambda \geq 0$, be given and suppose $m>0$. That portion of $F(\alpha)$ which is created through steps (1)-(4) has initial point $p_{a+b}$ and terminal point $w_{a+b}$, for some  $w_{a+b}$ in $r_{a+b} - \{p_{a+b}\}$. Define $\text{Sp}_m (p_{a+b},w_{a+b})$ to be that segment of $F(\alpha)$.  Define $H(w_{a+b},B)$  to be the segment of $F(\alpha)$ formed by step (5). Hence $F(\alpha)=\text{Sp}_m (p_{a+b},w_{a+b}) \cup H(w_{a+b},B)$. Similarly, $F(\beta)=\text{Sp}_{m+\lambda} (p_{a+b},q_{a+b}) \cup H(q_{a+b},B)$ for some $q_{a+b}$ in $r_{a+b}-\{p_{a+b}\}$.  If $m=0$, then by definition say that $F(\alpha)=\text{Sp}_0 (p_{a+b},p_{a+b}) \cup H(p_{a+b},B)=H(p_{a+b},B)$ (the segment which results from step (5) alone).  Note that $\text{Int}(F(\alpha))=\text{Int}(\text{Sp}_m (p_{a+b},w_{a+b}),H(w_{a+b},B))=m$, with the understanding that if $m=0$ then $w_{a+b}$ should be replaced with $p_{a+b}$ and the degenerate spiral $\text{Sp}_0 (p_{a+b},p_{a+b})$ should be interpreted as the point $p_{a+b}$.

When drawing $F(\alpha) \cup F(\beta)$, we can always choose $q_{a+b}$ to lie above $w_{a+b}$ (meaning that $\text{Im}(w_{a+b})<\text{Im}(q_{a+b})$) and choose $w_{a+b}$ to lie between the $(m-1)$th and the $m$th intersection of $F(\beta)$ with $r_{a+b}$ (as in Example \ref{planeint}).  We will always assume $F(\alpha)$ and $F(\beta)$ are drawn in this way since the resulting curves are representatives of the homotopy classes of $F(\alpha)$ and $F(\beta)$, respectively, which have the fewest pairwise intersections. Indeed, it ensures that $\text{Sp}_m (p_{a+b},w_{a+b})\cap \text{Sp}_{m+\lambda} (p_{a+b},q_{a+b})=\emptyset$ and $H(w_{a+b},B) \cap H(q_{a+b},B)=\emptyset$.

\begin{Exa}\label{planeint}  $F(\alpha)=F(121)=\text{Sp}_1(p_2,w_2) \cup H(w_2,B)$ and $F(\beta)=F(232)=\text{Sp}_2(p_2,q_2) \cup H(q_2, B)$ are shown, with their intersections highlighted. Note that $w_2$ is strictly between the 0th and 1st intersection of $F(\beta)$ with $r_2$.
\begin{center} 
 \begin{tikzpicture}[scale=2]
\fill[Linen] (-1,0) rectangle (5,3);
\draw[-] (-1,0) -- (5,0);
\draw[densely dotted] (0,1.5) -- (0,3) node[below left, scale=.8]{$r_1$}; 
\draw[densely dotted] (2,1.5) -- (2,3) node[below left, scale=.8]{$r_2$}; 
\draw[densely dotted] (4,1.5) -- (4,3) node[below left, scale=.8]{$r_3$};
\draw [DarkRed,  line width=.5mm] plot [smooth, tension=0.6] coordinates {(2,1.5)(2.6,1.4) (3.5,1)(4.3,1.1)(4.45,1.6) (4.1,2.1)(3.5,2.3)(2,2.4) (1.5,2.4)(1,2.4)(.5,2.35) (0,2.2) (-.3 ,1.9) (-.4,1.6)(-.3,1.2) (.2,1) (1,1.2) (2,1.8)(2.5,1.7) (3.5,1.2 )(4.2,1.3)(4.1,1.8 )(3.5,2.1)(2,2.2) (1,2.2) (0,2)(-.2,1.55) (-.1,1.3)(.2,1.2) (1,1.4)(2,2)}; 
 \draw[DarkRed, densely dashed, line width=.5mm] plot [smooth, tension=0.6] coordinates{(2,2) (2.2,2) (2.5,1.9)(2.7,1.5)(2.5,.8)(2,0)};
  \draw [DarkRed, line width=0.5mm] plot [smooth, tension=0.6] coordinates {(2,1.5)(3.5,.75)(4.3,.8)(4.7,1.2) (4.7,1.7)(4.5,2.1)(4.1,2.4) (3.5,2.6)(2.5,2.7) (1,2.7) (-.1,2.45) (-.7,1.7)(-.55,1)(.1,.75) (1,1) (2,1.65) }; 
   \draw[DarkRed, densely dashed, line width=.5mm] plot [smooth, tension=0.6] coordinates{(2,1.65) (2.3,1.55) (2.3,1)(2,0)};
   \draw[fill] (0,1.5) circle(1 pt) node[below, scale=.8] {$p_1$}; 
\draw[fill] (2,1.5) circle(1 pt) node[below, scale=.8] {$p_2$};
\draw[fill] (4,1.5) circle(1 pt) node[below, scale=.8] {$p_3$};
\draw[fill] (2,0) circle(1 pt) node[below, scale=1] {$B$}; 
\draw[fill] (2,1.65) circle(1 pt) node[left, scale=1] {$w_2$};
\draw[fill] (2,2) circle(1 pt) node[left, scale=1] {$q_2$};
 \draw[fill=yellow, opacity=0.5](2.33,1.45) circle(2 pt);  
  \draw[fill=yellow, opacity=0.5](2.65,1.16) circle(2 pt);  
 \end{tikzpicture} 
 \end{center}  
\end{Exa}

\begin{Thm}[\ref{mainthm} (2)]\label{type12} Let $\alpha=(m^a,(m+1)^b,m^c)$ and $\beta$ be given such that $\beta-\alpha \in \{0\} \cup \text{Im}_{+}$. Then $\text{Int} (F(\alpha), F(\beta))=\text{Int}(\gamma_\alpha^{\text{op}}, \gamma_\beta^{\text{op}})$.
\end{Thm}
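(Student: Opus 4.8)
The plan is to reduce the claimed equality to a single, model-independent count by exploiting the fact that both $F(\alpha)\cup F(\beta)$ and $\gamma_\alpha^{\text{op}}\cup\gamma_\beta^{\text{op}}$ have already been written as unions of spirals and $H$-segments. In each model, drawing the two curves in the minimal-intersection position described just above the statement kills the two ``like-type'' intersections: $\text{Sp}_m\cap\text{Sp}_{m+\lambda}=\emptyset$ and $H\cap H=\emptyset$. Thus on each side the intersection number collapses to a sum of exactly two cross-terms, and it suffices to prove the term-by-term identity
\[\text{Int}\bigl(\text{Sp}_m(p_{a+b},w_{a+b}),H(q_{a+b},B)\bigr)=\text{Int}\bigl(\text{Sp}_m(M_a,w_a),H(q_a,M_{a+b+1})\bigr),\]
together with the analogous identity obtained by swapping the roles of $\alpha$ and $\beta$ (so the spiral has depth $m+\lambda$ and is paired with the $H$-segment of $\alpha$).

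The heart of the argument is a \emph{localization} observation, already visible in Examples \ref{exa1} and \ref{planeint}: every cross intersection occurs in a neighborhood of one distinguished curve — the arc $l_a$ in the annulus and the ray $r_{a+b}$ in the plane. First I would record that in both models $\alpha$'s spiral meets this distinguished curve in exactly $m$ transverse points, consistently ordered along the winding direction, and that the $H$-segment of $\beta$ emanates from a point $q$ on the same curve lying past all $m$ of those crossings. Away from a neighborhood of $l_a$ (resp. $r_{a+b}$) the spiral is a family of nested, essentially parallel strands and the $H$-segment runs alongside them, so no crossing can be created there; every crossing is forced to occur where the $H$-segment pierces the distinguished curve to exit the nested region.

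Next I would match the local counts. In a neighborhood of the distinguished curve the crossing pattern is determined by the same three combinatorial inputs in both models: the strand count $m$ of the spiral, the strand count $m+\lambda$ of the $\beta$-curve, and the placement of the endpoints $q$ and $w$ relative to those strands fixed by our drawing convention (which puts $w$ between the $(m-1)$th and $m$th $\beta$-strand). Reading the number of forced piercings off this data yields the \emph{same} integer in the annulus and in the plane, giving each cross-term identity.

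The step I expect to be the main obstacle is controlling the ``global'' part of each $H$-segment — the portion that leaves the neighborhood of the distinguished curve and travels to its terminal point ($B$ in the plane, $M_{a+b+1}$ in the annulus) — and checking it contributes no extra crossings. This is where the geometry of the two surfaces genuinely differs, and it requires a short case analysis: the boundary case $a+b+1=n$, where the annulus $H$-segment must wrap the long way around to $M_1$ (cf. Example \ref{exa0}); the degenerate case $m=0$, where the spiral collapses to the point $p_{a+b}$ (resp. $M_a$) and both cross-terms should vanish; and the case $b=1$, where step (5) is trivial and $H$ is a bare segment to $B$. In each case I would homotope the global portion off the spiral and confirm, as in Examples \ref{exa0} and \ref{exa1}, that the local count computed above is exactly the total, completing the proof.
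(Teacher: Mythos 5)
Your overall architecture matches the paper's: both arguments rest on the spiral-plus-$H$ decomposition, the vanishing of the two like-type terms under the minimal drawing convention, and the reduction of each side to two cross-terms. Where you diverge is in how the cross-terms are handled. The paper evaluates each cross-term \emph{explicitly}: it sets up a ``next intersection along $H(w,\cdot)$'' (resp.\ ``previous intersection along $\text{Sp}_m$'') bijection onto the self-intersection set $\text{Sp}_m\cap H(w,\cdot)$, which is already known to have exactly $m$ elements, and so gets $0+m+m+0=2m$ on each side separately. You instead propose to match the plane cross-term directly against the annulus cross-term without computing the common value; that is legitimate, and in fact it is essentially the content of the intersection-preserving deformation of $F(\alpha)\cup F(\beta)$ into $\gamma_\alpha^{\text{op}}\cup\gamma_\beta^{\text{op}}$ that the paper describes immediately \emph{after} the proof (and uses as the actual proof in the type~2 case). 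What the paper's route buys is an unconditional numerical answer ($2m$) with no need to argue that two pictures on different surfaces are ``the same''; what your route buys is independence from the value of $m$ and a cleaner explanation of why the hypothesis $\beta-\alpha\in\{0\}\cup\text{Im}_{+}$ is needed.

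One caution about the step you call localization: as stated it is not quite right. The cross intersections of $H(q,\cdot)$ with $\text{Sp}_m$ are not created by local data in a neighborhood of $l_a$ (resp.\ $r_{a+b}$); locally near that curve the $H$-segment can be drawn parallel to the strands and crosses nothing. The $m$ crossings are forced \emph{globally}, by the fact that each of the $m$ nested strands of the spiral separates the starting point $q$ from the terminal point ($M_{a+b+1}$ or $B$), so any representative of the homotopy class must pierce each strand at least once; where those piercings sit along the curve is a drawing choice. The correct statement is that this separation structure (strand counts $m$, $m+\lambda$, and the ordering of $w$, $q$ along the distinguished curve) is identical in the two models, which is exactly what the paper's endpoint identifications $p_{a+b}\sim M_a$, $B\sim M_{a+b+1}$ make precise. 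You do flag the global portion of $H$ as the main obstacle and list the right degenerate cases ($a+b+1=n$, $m=0$, $b=1$), so the proposal is repairable, but the ``read the count off a neighborhood of the distinguished curve'' framing would need to be replaced by a separation argument or by the explicit deformation before this counts as a proof.
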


\begin{proof} First note that if $m=0$ then it is easy to directly check the conclusion of the theorem, since $\text{Int} (F(\alpha), F(\beta))=\text{Int}(\gamma_\alpha^{\text{op}}, \gamma_\beta^{\text{op}})=0$ for any $\beta$. So suppose $m>0$. Let $0\leq \lambda$ be given, so that $\gamma_\alpha^{\text{op}}=\text{Sp}_m(M_a,w_a)\cup H(w_a,M_{a+b+1})$ for some $w_a \in l_a-\{M_a,M_n\}$ and $\gamma_\beta^{\text{op}}=\text{Sp}_{m+\lambda}(M_a, q_a)\cup H(q_a, M_{a+b+1})$ for $q_a \in l_a-\{M_a,M_n\}$. Now suppose $y \in \text{Sp}_{m+\lambda}(M_a,q_a) \cap H(w_a,M_{a+b+1})$. Let $y'$ be the next (according to the orientation of  $H(w_a,M_{a+b+1})$) intersection occurring between $H(w_a,M_{a+b+1})$ and any other curve. That other curve must be $\text{Sp}_m(M_a,w_a)$ because of the way we have chosen to draw $\gamma_\alpha^{\text{op}}$ and $\gamma_\beta^{\text{op}}$ (see Example \ref{intnum}). Hence $y' \in \text{Sp}_m(M_a,w_a)\cap H(w_a,M_{a+b+1})$. $y \mapsto y'$ in fact defines a bijection, which implies

\[\text{Int}(\text{Sp}_{m+\lambda}(M_a,q_a), H(w_a,M_{a+b+1}))=\text{Int}(\text{Sp}_m(M_a,w_a),H(w_a,M_{a+b+1}))=m.\]
 
Note that this is true whether $a+b+1 <n$ or $a+b+1 =n$ (see Example \ref{intnum}). Now let $z \in\text{Sp}_m(M_a,w_a)\cap H(q_a,M_{a+b+1})$.  Let $z'$ be the previous intersection between $\text{Sp}_m(M_a,w_a)$ and any other curve. That curve must be $H(w_a,M_{a+b+1})$ (see Example \ref{intnum0}), so $z' \in \text{Sp}_m(M_a,w_a) \cap H(w_a,M_{a+b+1})$. Since $z \mapsto z'$ is a bijection, 

\[\text{Int}(\text{Sp}_m(M_a,w_a),H(q_a,M_{a+b+1})) =\text{Int}(\text{Sp}_m(M_a,w_a),H(w_a,M_{a+b+1}))=m.\]

Again, this holds true whether $a+b+1<n$ or $a+b+1=n$. It follows that 
\begin{multline*} 
\text{Int}(\gamma_\alpha^{\text{op}},\gamma_\beta^{\text{op}})=\text{Int}(\text{Sp}_m(M_a,w_a),\text{Sp}_{m+\lambda}(M_a,q_a))+\text{Int}(\text{Sp}_m(M_a,w_a),H(q_a,M_{a+b+1})) \\
+\text{Int}(H(w_a,M_{a+b+1}),\text{Sp}_{m+\lambda}(M_a,q_a))+\text{Int}(H(w_a,M_{a+b+1}),H(q_a,M_{a+b+1})) \\
= 0+m+m+0 \\
=2m.\\ 
\end{multline*} 

Now let $F(\alpha)=\text{Sp}_m (p_{a+b},w_{a+b})\cup H(w_{a+b},B)$ and $F(\beta)=\text{Sp}_{m+\lambda} (p_{a+b},q_{a+b}) \cup H(q_{a+b},B)$ for some $w_{a+b}, q_{a+b} \in r_{a+b}-\{p_{a+b}\}$.  Now, let $y\in  \text{Sp}_{m+\lambda}(p_{a+b},q_{a+b}) \cap H(w_{a+b},B)$. Let $y'$ be the next (according to the orientation of $H(w_{a+b},B)$) intersection of $H(w_{a+b},B)$ with any other curve. That other curve must be $\text{Sp}_{m} (p_{a+b},w_{a+b})$ because of the way $F(\alpha)$ and $F(\beta)$ are drawn (see Example \ref{intnum2}). Hence $y' \in  \text{Sp}_{m} (p_{a+b},w_{a+b})\cap H(w_{a+b},B)$. $y \mapsto y'$ defines a bijection between $  \text{Sp}_{m+\lambda} (p_{a+b},q_{a+b})\cap H(w_{a+b},B)$ and $\text{Sp}_{m} (p_{a+b},w_{a+b})\cap H(w_{a+b},B)$. Therefore, 

\[\text{Int}(\text{Sp}_{m+\lambda} (p_{a+b},q_{a+b}),H(w_{a+b},B))=\text{Int}(\text{Sp}_m (p_{a+b},w_{a+b}),H(w_{a+b},B))=m.\]

 Similarly, let $z\in \text{Sp}_m (p_{a+b},w_{a+b}) \cap H(q_{a+b},B)$. Let $z'$ be the previous intersection between $\text{Sp}_m (p_{a+b},w_{a+b})$ and any other curve. That other curve must be $H(w_{a+b},B)$ (see Example \ref{intnum2}), and therefore $z' \in \text{Sp}_m (p_{a+b},w_{a+b}) \cap H(w_{a+b},B)$. $z \mapsto z'$ gives a bijection between $\text{Sp}_m (p_{a+b},w_{a+b})\cap H(q_{a+b},B)$ and $\text{Sp}_m (p_{a+b},w_{a+b}) \cap H(w_{a+b},B)$. Therefore, 
 
\[\text{Int}(\text{Sp}_m (p_{a+b},w_{a+b}), H(q_{a+b},B))=\text{Int}(\text{Sp}_m (p_{a+b},w_{a+b}),H(w_{a+b},B))=m.\]

So by a computation analogous to that for $\gamma_\alpha^{\text{op}}$ and $\gamma_\beta^{\text{op}}$, $\text{Int}(F(\alpha),F(\beta))=2m$.

\end{proof}

\begin{Exa}\label{intnum} Let $m=1$ and $\lambda=2$. The dashed spiral is  $\text{Sp}_1(M_a,w_a)$ and the solid spiral is $\text{Sp}_3(M_a,q_a)$. The two dotted curve segments show the two choices for $H(w_a, M_{a+b+1})$.
\begin{center} 
\begin{tikzpicture}[scale=1.7]
\clip (-3,-3.8) rectangle (2,-1);
 \draw (0,0) circle (3cm)
   	circle (.5cm);
   	\draw[dotted] (0cm,-.5cm) -- (250:3cm)  ;
   	\draw [dotted] plot [smooth, tension=.8] coordinates { (270:.5) (290:.6)(310:.7) (330:.8) (350:.9) (20:1) (40:1)(60:1)(80:1)(100:1) (120:1)(140:1) (160:1.0) (180:1.05) (200:1.17) (220:1.45) (250:3)}; 
   	 \draw [DarkRed,densely dashed, line width=.5mm] plot [smooth, tension=0.6] coordinates { (250:3) (270:2.8) (290:2.8) (310:2.8) (330:2.8) (350:2.8) (10:2.8) (30:2.8) (50:2.8) (70:2.8) (90:2.8) (110:2.8) (130:2.8) (150:2.8) (170:2.75) (190:2.7) (210:2.6) (230:2.5)  (251:2.4)}; 
 	\draw[DarkRed,dotted, line width=.5mm] plot[smooth, tension=0.6] coordinates {(251:2.4)(270:2.3)(290:2.25) (310:2.2) (330:2.2) (350:2.2) (10:2.2)(30:2.2)(50:2.2) (70:2.2)(90:2.2)(110:2.2)(130:2.2)(150:2.2) (170:2.2) (190:2.2) (210:2.22 ) (240:3)};	
 	 \draw[DarkRed, dotted,line width=.5mm] plot[smooth, tension=0.8] coordinates {  (250:2.4)(290:3)};
   	\draw [DarkRed, line width=.5mm] plot [smooth, tension=0.6 ] coordinates { (250:3) (270:2.7)  (290:2.5) (310:2.45) (330:2.45) (350:2.45) (10:2.45)(30:2.45)(50:2.45) (70:2.45)(90:2.45)(110:2.45)(130:2.45)(150:2.45) (170:2.4) (190:2.4)(210:2.35) (230:2.25)(250:2.15)(270:2) (290:1.9) (310:1.85) (330:1.85) (350:1.85)(10:1.85) (30:1.85) (50:1.85)(70:1.85)(90:1.85)(110:1.85)(130:1.85) (150:1.85) (170:1.85) (190:1.8) (210:1.75) (230:1.7)  (251:1.65) 
(270:1.6)(290:1.55) (310:1.55) (330:1.55) (350:1.55) (10:1.55)(30:1.55)(50:1.55) (70:1.55) (90:1.55)(110:1.55)(130:1.55) (150:1.55) (170:1.55)(190:1.5)(210:1.45)(230:1.4)(252:1.35)};  

 \draw[fill] (254:1.35) circle(1pt) node[above, scale=1]{$q_a$};
 \draw[fill] (251:2.4) circle(1pt) node[below, scale=1]{$w_a$};   
 \draw[fill=yellow] (.3,-2.6 ) circle(2 pt) node[above right, scale=1] {$y$}; 
\draw[fill=LimeGreen] (-1.7,-1.85 )  circle(2 pt) node[below left, scale=1] {$y'$}; 
\draw[fill=yellow] (-1.8,-1.45  ) circle(2 pt) node[above right, scale=1] {$y$}; 
\draw[fill=LimeGreen] (.7,-2.7)  circle(2 pt) node[above right, scale=1] {$y'$}; 
 	\draw[fill] (0cm,-.5cm) circle(1pt) node[above, scale=.8]{$M_3$};
   	\draw[fill] (250:3) circle(1pt) node[below, scale=1]{$M_a$};	
   	\draw[fill] (290:3) circle(1pt) node[below, scale=1]{$M_{a+b+1}$};	
   	\draw[fill] (240:3) circle(1pt) node[below, scale=1]{$M_1$};
\end{tikzpicture}
\end{center}
\end{Exa}

\begin{Exa}\label{intnum0} Let the curves be as in Example \ref{exa1}. Then  $\text{Sp}_m(M_a,w_a)=\text{Sp}_1(M_1,w_1)$ and $H(q_a,M_{a+b+1})= H(q_1, M_1)$. There is only one intersection $z$ between them.  
\begin{center}
\begin{tikzpicture}[scale=2.5]
\clip (-2.3,-3.3) rectangle (-.6,-.9);
\draw (0,0) circle (3cm)
   	circle (.5cm);
\draw[dotted] (0cm,-.5cm) -- (250:3cm)  ;
\draw[dotted] (0cm,-.5cm) -- (290:3cm) ;	
\draw [dotted] plot [smooth, tension=.8] coordinates { (270:.5) (290:.6)(310:.7) (330:.8) (350:.9) (20:1) (40:1)(60:1)(80:1)(100:1) (120:1)(140:1) (160:1.0) (180:1.05) (200:1.17) (220:1.45) (250:3)}; 
 \draw [DarkRed,line width=0.5mm] plot [smooth, tension=0.6] coordinates { (250:3) (270:2.8) (290:2.8) (310:2.8) (330:2.8) (350:2.8) (10:2.8) (30:2.8) (50:2.8) (70:2.8) (90:2.8) (110:2.8) (130:2.8) (150:2.8) (170:2.75) (190:2.7) (210:2.6) (230:2.5)  (250:2.4)}; 
 \draw[DarkRed,densely dashed, line width=.5mm] plot[smooth, tension=0.6] coordinates {(250:2.4)(270:2.3)(290:2.25) (310:2.2) (330:2.2) (350:2.2) (10:2.2)(30:2.2)(50:2.2) (70:2.2)(90:2.2)(110:2.2)(130:2.2)(150:2.2) (170:2.2) (190:2.2) (210:2.22 ) (250:3)};	
\draw [DarkRed, line width=.5mm] plot [smooth, tension=0.6 ] coordinates { (250:3) (270:2.7)  (290:2.5) (310:2.45) (330:2.45) (350:2.45) (10:2.45)(30:2.45)(50:2.45) (70:2.45)(90:2.45)(110:2.45)(130:2.45)(150:2.45) (170:2.4) (190:2.4)(210:2.35) (230:2.25)(250:2.15)(270:2) (290:1.9) (310:1.85) (330:1.85) (350:1.85)(10:1.85) (30:1.85) (50:1.85)(70:1.85)(90:1.85)(110:1.85)(130:1.85) (150:1.85) (170:1.85) (190:1.8) (210:1.75) (230:1.7)  (251:1.65) 
(270:1.6)(290:1.55) (310:1.55) (330:1.55) (350:1.55) (10:1.55)(30:1.55)(50:1.55) (70:1.55) (90:1.55)(110:1.55)(130:1.55) (150:1.55) (170:1.55)(190:1.5)(210:1.45)(230:1.4)(252:1.35)}; 
   	\draw[DarkRed,densely dashed, line width=.5mm] plot[smooth, tension=0.6] coordinates {(252:1.35)(270:1.3)(290:1.3)(310:1.3) (330:1.3)(350:1.3)  (10:1.3)(30:1.3) (50:1.3)(70:1.3)(90:1.3) (110:1.3)(130:1.3) (150:1.3)(170:1.3)(190:1.35) (210:1.5)(230:1.9) (250:3)} ; 
 \draw[fill] (252:1.35) circle(1pt) node[above, scale=1]{$q_1$};
 \draw[fill] (251:2.4) circle(1pt) node[below, scale=1]{$w_1$};
 \draw[fill=yellow](-1.12,-2.15 ) circle(2 pt) node[below left,scale=1]{$z$};  
 \draw[fill=LimeGreen](-1.47,-2) circle(2 pt)node[below left, scale=1]{$z'$} ; 
 \draw[fill] (0cm,-.5cm) circle(1pt) node[above, scale=1]{$M_3$};
   	\draw[fill] (250:3) circle(1pt) node[below, scale=1]{$M_1$};		
   	\draw[fill] (290:3) circle(1pt)node[below, scale=1]{$M_2$};	
\end{tikzpicture}
\end{center}
\end{Exa}

\begin{Exa}\label{intnum2} Let $m=1$ and $\lambda=1$ as in Example \ref{planeint}. 
\begin{center}
 \begin{tikzpicture}[scale=4]
 \clip (1.4,.6) rectangle (3.3,2.1);  
\fill[Linen] (-1,0) rectangle (5,3);
\path (-1,0) -- (5,0);
\draw[densely dotted] (0,1.5) -- (0,3) node[below left, scale=.8]{$r_1$}; 
\draw[densely dotted] (2,1.5) -- (2,3) node[below left, scale=.8]{$r_2$}; 
\draw[densely dotted] (4,1.5) -- (4,3) node[below left, scale=.8]{$r_3$};
 \draw [DarkRed, line width=0.5mm] plot [smooth, tension=0.6] coordinates {(2,1.5)(3.5,.75)(4.3,.8)(4.7,1.2) (4.7,1.7)(4.5,2.1)(4.1,2.4) (3.5,2.6)(2.5,2.7) (1,2.7) (-.1,2.45) (-.7,1.7)(-.55,1)(.1,.75) (1,1) (2,1.65) }; 
   \draw[DarkRed, densely dashed, line width=.5mm] plot [smooth, tension=0.7] coordinates{(2,1.65) (2.3,1.55) (2.3,1)(2,0)}; 
\draw [DarkRed, line width=.5mm] plot [smooth, tension=0.5] coordinates {(2,1.5)(2.6,1.4) (3.5,1)(4.3,1.1)(4.45,1.6) (4.1,2.1)(3.5,2.3)(2,2.4) (1.5,2.4)(1,2.4)(.5,2.35) (0,2.2) (-.3 ,1.9) (-.4,1.6)(-.3,1.2) (.2,1) (1,1.2) (2,1.8) (3.5,1.2 )(4.2,1.3)(4.1,1.8 )(3.5,2.1)(2,2.2) (1,2.2) (0,2)(-.2,1.55) (-.1,1.3)(.2,1.2) (1,1.4)(2,2)}; 
 \draw[DarkRed,densely dashed, line width=.5mm] plot [smooth, tension=0.7] coordinates{(2,2)(2.4,1.95)(2.7,1.5)(2.5,.8)(2,0)};
    \node at (-2,2.2) {$\text{Sp}_2(p_2,q_2)$} ;
     \draw [DarkRed,dotted, line width=.5mm] (-2.5,1.5) -- (-1.5,1.5);  
     \node at (-2,1.7) {$H(q_2,B)$} ;  
      \draw [DarkRed, densely dashed, line width=.5mm] (-2.5,1) -- (-1.5,1);  
      \node at (-2,1.2) {$\text{Sp}_1(p_2,w_2)$} ;
      \draw [DarkRed,dash pattern = on 3mm off .5mm, line width=.5mm] (-2.5,.5) -- (-1.5,.5);  
      \node at (-2,.7) {$H(w_2,B)$} ; 
       \draw[fill=yellow](2.33,1.46) circle(1pt) node[above right,scale=1]{$y$};  
        \draw[fill=yellow]  (2.65,1.15) circle(1pt) node[above right, scale=1] {$z$}; 
        \draw[fill=LimeGreen]  (2.34,1.3) circle(1pt) node[below left, scale=1] {$y'=z'$}; 
         \draw[fill] (0,1.5) circle(1 pt) node[below, scale=.8] {$p_1$}; 
\draw[fill] (2,1.5) circle(.5 pt) node[below, scale=.8] {$p_{a+b}$};
\draw[fill] (4,1.5) circle(1 pt) node[below, scale=.8] {$p_3$};
\draw[fill] (2,0) circle(1 pt) node[below, scale=1] {$B$}; 
\draw[fill] (2,1.65) circle(.5 pt) node[left, scale=1] {$w_{a+b}$};
\draw[fill] (2,2) circle(.5 pt) node[left, scale=1] {$q_{a+b}$}; 
      \end{tikzpicture}
      \end{center} 
\end{Exa}

We are able to easily show that $\text{Int} (F(\alpha), F(\beta))=\text{Int}(\gamma_\alpha^{\text{op}}, \gamma_\beta^{\text{op}})$ because the descriptions of the planar curves and of the annulus curves in terms of spirals are extremely similar. This similarity can be explained through a natural deformation of $F(\alpha) \cup F(\beta)$ into  $\gamma_\alpha^{\text{op}} \cup \gamma_\beta^{\text{op}}$. In $\Gamma$, $p_{a+b}$ is the common initial point of $F(\alpha)$ and $F(\beta)$ and $B$ is the common terminal point. In $(\mathcal{S},\mathcal{M})^{\text{op}}$, $M_{a}$ is the common initial point of $\gamma_\alpha^{\text{op}}$ and $\gamma_\beta^{\text{op}}$. If $a+b+1<n$, then the common terminal point is $M_{a+b+1}$, which must lie to the right of $M_a$.  To visualize the deformation, pull $p_{a+b}$ down and to the left so that $B$ lies to the right of it. Then identify $p_{a+b}$ with $M_a$ and  $B$ with $M_{a+b+1}$ (Example \ref{singlecurve}). If $a+b+1=n$, then the common endpoint of $\gamma_\alpha^{\text{op}}$ and $\gamma_\beta^{\text{op}}$ is $M_1$, which must lie to the left of (or be equal to) the initial point $M_a$. In this case, carry $p_{a+b}$ clockwise over $F(\alpha) \cup F(\beta)$ and then identify it with $M_a$ (Example \ref{degeneratecase}). Identify $B$ with $M_{1}$. This deformation results in the identifications (cf. examples \ref{singlecurve} and \ref{degeneratecase}): $\text{Sp}_m (p_{a+b},w_{a+b})$ with $\text{Sp}_m(M_a,w_a)$, $H(w_{a+b},B)$ with $ H(w_a,M_{a+b+1})$, $\text{Sp}_{m+\lambda} (p_{a+b},q_{a+b})$ with $ \text{Sp}_{m+\lambda}(M_a,q_a)$, and $H(q_{a+b},B)$ with $ H(q_a,M_{a+b+1})$ (where spirals are allowed to be degenerate). In particular, $F(\alpha)$ is deformed into $\gamma_\alpha^{\text{op}}$ and $F(\beta)$ is deformed into $\gamma_\beta^{\text{op}}$. No intersections are removed nor introduced throughout this process.

\begin{Exa}\label{singlecurve} Let $\alpha=11211$ and $\beta=22322$. $F(11211)=\text{Sp}_1 (p_3,w_3) \cup H(w_3,B)$ gets deformed into $\gamma_{11211}^{\text{op}}=\text{Sp}_1 (M_2,w_2)\cup H(w_2,M_4)$. $F(22322)=\text{Sp}_2(p_3,q_3)\cup H(q_3,B)$ gets deformed into $\gamma_{22322}^{\text{op}}=\text{Sp}_2(M_2,q_2)\cup H(q_2,M_4)$.   
\begin{center}    
\begin{tikzpicture}[scale=1.5]
\fill[Linen] (-1,0) rectangle (9,3);
\draw[-] (-1,0) -- (9,0);
\draw[densely dotted] (0,1.5) -- (0,3) node[below left, scale=.8]{$r_1$}; 
\draw[densely dotted] (2,1.5) -- (2,3) node[below left, scale=.8]{$r_2$}; 
\draw[densely dotted] (4,1.5) -- (4,3) node[below left, scale=.8]{$r_3$}; 
\draw[densely dotted] (6,1.5) -- (6,3) node[below left, scale=.8]{$r_4$}; 
\draw[densely dotted] (8,1.5) -- (8,3) node[below left, scale=.8]{$r_5$}; 
\draw [DarkRed, dash pattern = on 3mm off .5mm,line width=.5mm] plot [smooth, tension=0.6] coordinates {(4, 1.5) (5,1.55) (6,1.6)  (6.8, 1.3)(7.4,1) (7.9,.85)(8.4,.9) (8.65,1.25)(8.5,1.8)(7.2,2.35) (5,2.55) (3,2.55)(1,2.4) (-.1,2)(-.45,1.2)(-.1,.75)(.5,.85) (1.3,1.25)(2,1.6)(3,1.75)(4,1.7)} ;  
\draw [DarkRed,loosely dashed, line width=.5mm] plot [smooth, tension=0.6] coordinates {(4,1.7)(4.5,1.5)(4.3,.6) (4,0)};    
\draw [DarkRed, line width=.5mm] plot [smooth, tension=0.6] coordinates  {(4, 1.5) (5,1.7) (6,1.7)  (6.9, 1.4)(7.9,1) (8.4,1.15) (8.5,1.55)(8,1.95) (7,2.2) (5,2.4) (3,2.4)(1,2.2) (0,1.9)(-.3,1.2)(.3,.9) (2,1.75)(3,1.95)(4,1.9) (5,1.9) (6,1.85) (6.8, 1.65)(7.4,1.4) (7.9,1.2)(8.2,1.2) (8.3,1.5)(7.9,1.8) (7,2) (5,2.2) (3,2.25)(1,2) (0,1.7)(-.1,1.25)(.3,1.1)(2,1.9)(3,2.1)(4,2)  } ; 
\draw [DarkRed,densely dashed, line width=.5mm] plot [smooth, tension=0.6] coordinates {(4,2)(4.7,1.6)(4.45,.6) (4,0)};   
\draw[fill] (0,1.5) circle(1 pt) node[below, scale=1] {$p_1$}; 
\draw[fill] (2,1.5) circle(1 pt) node[below, scale=1] {$p_2$};
\draw[fill] (4,1.5) circle(1 pt) node[below, scale=1] {$p_3$};
\draw[fill] (6,1.5) circle(1 pt) node[below, scale=1] {$p_4$};
\draw[fill] (8,1.5) circle(1 pt) node[below, scale=1] {$p_5$};  
\draw[fill] (4,1.7) circle(1 pt) node[below left, scale=1] {$w_3$};
\draw[fill] (4,2) circle(1 pt) node[above right, scale=1] {$q_3$};
\draw[fill] (4,0) circle(1 pt) node[below, scale=1] {$B$}; 
 \draw [DarkRed,line width=.5mm] (-2.5,2) -- (-1.2,2);   
    \node at (-1.85,2.2) {$\text{Sp}_2 (p_3,q_3)$} ;  
     \draw [DarkRed,densely dashed,line width=.5mm] (-2.5,1.5) -- (-1.2,1.5);  
     \node at (-1.85,1.7) {$H(q_3,B)$} ;  
 \draw [DarkRed,dash pattern = on 3mm off .5mm, line width=.5mm] (-2.5,1) -- (-1.2,1);   
    \node at (-1.85,1.2) {$\text{Sp}_1 (p_3,w_3)$} ;  
     \draw [DarkRed,loosely dashed,line width=.5mm] (-2.5,.5) -- (-1.2,.5);  
     \node at (-1.85,.7) {$H(w_3,B)$} ;   
\end{tikzpicture} 

\begin{tikzpicture}[scale=1]
\path(-3,-4.5) -- (3,-4.5);   
   	\draw [DarkRed, dash pattern = on 3mm off .5mm, line width=.5mm] plot [smooth, tension=0.6] coordinates {(230:3.8)(250:3.1) (270:2.8) (290:2.7) (310:2.65) (330:2.65) (350:2.65) (10:2.65) (30:2.65) (50:2.65) (70:2.65) (90:2.65) (110:2.65) (130:2.65) (150:2.65) (170:2.65)(190:2.6) (210:2.55) (230:2.5) (250:2.45) (260:2.45)};  
   	\draw[DarkRed,loosely dashed, line width=.5mm] plot[smooth, tension=0.8] coordinates { (260:2.45)(270:4.3)} ;  
 \draw [DarkRed, line width=.5mm] plot [smooth, tension=0.6] coordinates {(230:3.8)(250:3) (270:2.6) (290:2.5) (310:2.45) (330:2.45) (350:2.45) (10:2.45) (30:2.45) (50:2.45) (70:2.45) (90:2.45) (110:2.45) (130:2.45) (150:2.45) (170:2.45)(190:2.4) (210:2.35) (230:2.3) (250:2.25) (270:2.25)(290:2.15)(310:2.1)(330:2.1)(350:2.1)(10:2.1)(30:2.1)(50:2.1)(70:2.1)(90:2.1)(110:2.1)(130:2.1)(150:2.1)(170:2.1)(190:2.05)(210:2)(230:1.95)(250:1.9)(270:1.9)};  
 \draw[DarkRed,densely dashed,line width=.5mm] plot[smooth, tension=0.8] coordinates { (270:1.9)(270:4.3)} ;   
\draw[fill](270:4.3) circle(1 pt) node[right, scale=1] {$B$};
\draw[fill] (230:3.8)  circle(1 pt) node[below, scale=1] {$p_3$};
\draw[fill] (-1,0) circle(1 pt) node[below, scale=1] {$p_1$}; 
\draw[fill] (1,0) circle(1 pt) node[below, scale=1] {$p_5$}; 
\draw[fill]  (260:2.45)circle(1 pt) node[below left, scale=1] {$w_3$};
\draw[fill]  (270:1.9)circle(1 pt) node[above right, scale=1] {$q_3$};
 
\end{tikzpicture}
\hspace{1mm}
\begin{tikzpicture}[scale=1.2]
   \draw (0,0) circle (3cm)
   	circle (.5cm);
   	\draw[dotted] (0cm,-.5cm) -- (250:3cm) ;
   	\draw[dotted] (0cm,-.5cm) -- (290:3cm) ;
   	\draw[dotted] (0cm,-.5cm) -- (263:3cm) ;
   	\draw[dotted] (0cm,-.5cm) -- (276:3cm) ;
   	\draw [dotted] plot [smooth, tension=.8] coordinates { (270:.5) (290:.6)(310:.7) (330:.85) (350:1) (20:1.2) (40:1.35)(60:1.4)(80:1.4)(100:1.4) (120:1.4)(140:1.4) (160:1.4) (180:1.4) (200:1.45) (220:1.65) (250:3)}; 	 
   	\draw [DarkRed,dash pattern = on 3mm off .5mm, line width=.5mm] plot [smooth, tension=0.6] coordinates { (263:3) (290:2.6) (310:2.55) (330:2.55) (350:2.55) (10:2.55) (30:2.55) (50:2.55) (70:2.55) (90:2.55) (110:2.55) (130:2.55) (150:2.55) (170:2.55) (190:2.55) (210:2.55) (230:2.55) (250:2.55)(263:2.55)};
  	\draw [DarkRed,loosely dashed, line width=0.5mm] plot [smooth, tension=0.8] coordinates {(263:2.55)(270:2.6)(290:3) }; 
  	\draw [DarkRed, line width=0.5mm] plot [smooth, tension=0.6] coordinates { (263:3) (290:2.4) (310:2.35) (330:2.35) (350:2.35) (10:2.35) (30:2.35) (50:2.35) (70:2.35) (90:2.35) (110:2.35) (130:2.35) (150:2.35) (170:2.35) (190:2.35) (210:2.35) (230:2.35) (250:2.35)(270:2.3)(290:2.2)(310:2.15)(330:2.15) (350:2.15) (10:2.15) (30:2.15) (50:2.15) (70:2.15) (90:2.15) (110:2.15) (130:2.15) (150:2.15) (170:2.15) (190:2.15) (210:2.15) (230:2.15) (250:2.15)(263:2.1)};
  	\draw [DarkRed,densely dashed, line width=0.5mm] plot [smooth, tension=0.8] coordinates {(263:2.1)(290:3) }; 
  	\draw[fill]  (263:2.55)circle(1 pt) node[below left, scale=1] {$  w_2$};
  	
  	\draw[fill]  (263:2.1)circle(1 pt) node[above right, scale=1] {$ q_2$};
  		\draw[fill] (263:3) circle(1pt) node[below, scale=1]{$M_2 \sim p_3$};		
   	\draw[fill] (290:3) circle(1pt)node[below right, scale=1]{$M_4 \sim B$};	
   		\draw [DarkRed,line width=.5mm] (-4.5,-1.5) -- (-3,-1.5);   
    \node at (-3.75,-1.3) {$\text{Sp}_2 (M_2,q_2)$} ;  
     \draw [DarkRed,densely dashed,line width=.5mm] (-4.5,-2) -- (-3,-2);  
     \node at (-3.75,-1.8) {$H(q_2,M_4)$} ;  
   	\draw [DarkRed, dash pattern = on 3mm off .5mm, line width=.5mm] (-4.5,-2.5) -- (-3,-2.5);   
    \node at (-3.75,-2.3) {$\text{Sp}_1 (M_2,w_2)$} ;  
     \draw [DarkRed,loosely dashed, line width=.5mm] (-4.5,-3) -- (-3,-3);  
     \node at (-3.75,-2.8) {$H(w_2,M_4)$} ;  
\end{tikzpicture}
\end{center} 
\end{Exa}

\begin{Exa}\label{degeneratecase} Let $\alpha=010$ and $\beta=343$ and deform $F(\alpha) \cup F(\beta)$ into $\gamma_\alpha^{\text{op}} \cup \gamma_\beta^{\text{op}}$. In this case, $a+b+1=n$ so $p_2$ is carried over $F(343) \cup F(010)$ before being identified with  $M_a=M_1$. $B$ is identified with $M_1$.
\begin{center}
\begin{tikzpicture}[scale=1.7]
\fill[Linen] (-1,0) rectangle (5,3);
\draw[-] (-1,0) -- (5,0);
\draw[densely dotted] (0,1.5) -- (0,3) node[below left, scale=.8]{$r_1$}; 
\draw[densely dotted] (2,1.5) -- (2,3) node[below left, scale=.8]{$r_2$}; 
\draw[densely dotted] (4,1.5) -- (4,3) node[below left, scale=.8]{$r_3$};
\draw [DarkRed, line width=.5mm] plot [smooth, tension=0.6] coordinates {(2,1.5)(3.5,.75)(4.4,.8)(4.7,1.2) (4.6,1.9)(4.1,2.3)(3.6,2.45)(3.1,2.5)(2.5,2.55)  (1,2.55) (0,2.4) (-.6,1.7)(-.55,1)(.1,.75) (1,1) (2,1.65)(2.6,1.45) (3.5,1)(4.3,1.1)(4.45,1.6) (4.1,2.1)(3.5,2.3)(2,2.4) (1.5,2.4)(1,2.4)(.5,2.35) (0,2.2) (-.3 ,1.9) (-.4,1.6)(-.3,1.2) (.2,1) (1,1.2) (2,1.8)(2.5,1.7) (3.5,1.2 )(4.2,1.3)(4.1,1.8 )(3.5,2.1)(2,2.2) (1,2.2) (0,2)(-.2,1.55) (-.1,1.3)(.2,1.2) (1,1.4)(2,2)}; 
 \draw[DarkRed,densely dashed, line width=.5mm] plot [smooth, tension=0.6] coordinates{(2,2) (2.2,2) (2.5,1.9)(2.7,1.5)(2.5,.8)(2,0)};
 \draw [DarkRed,dash pattern = on 3mm off .5mm, line width=.5mm] plot [smooth, tension=0.8] coordinates {(2,1.5) (2,0) };
 \draw[fill] (0,1.5) circle(1 pt) node[below, scale=.8] {$p_1$}; 
\draw[fill] (2,1.5) circle(1 pt) node[below left, scale=.8] {$p_2$};
\draw[fill] (4,1.5) circle(1 pt) node[below, scale=.8] {$p_3$};
\draw[fill] (2,0) circle(1 pt) node[below, scale=.8] {$B$};  
  \draw[fill] (2,2) circle(1 pt) node[below, scale=.8] {$q_2$};
  \draw [DarkRed, line width=.5mm] (-2.5,2) -- (-1.5,2);   
 \node at (-2,2.2) {$\text{Sp}_3(p_2,q_2)$} ;
 \draw [DarkRed,densely dashed, line width=.5mm] (-2.5,1.5) -- (-1.5,1.5);  
 \node at (-2,1.7) {$H(q_2,B)$} ; 
 \draw [DarkRed,dash pattern = on 3mm off .5mm, line width=.5mm] (-2.5,1) -- (-1.5,1);  
 \node at (-2,1.2) {$H(p_2,B)$} ; 
 \end{tikzpicture}
 
 \begin{tikzpicture}[scale=.8]
\path(-3,-4.5) -- (3,-4.5);   
   	\draw [DarkRed, line width=.5mm] plot [smooth, tension=0.6 ] coordinates {(230:3.8)(250:3.1) (270:2.8) (290:2.7) (310:2.65) (330:2.65) (350:2.65) (10:2.65) (30:2.65) (50:2.65) (70:2.65) (90:2.65) (110:2.65) (130:2.65) (150:2.65) (170:2.65)(190:2.6) (210:2.55) (230:2.5) 	(250:2.45)(270:2.4) (290:2.35) (310:2.3) (330:2.25) (350:2.25) (10:2.25) (30:2.25) (50:2.25)(70:2.25)(90:2.25)(110:2.25)(130:2.25) (150:2.25) (170:2.25) (190:2.2) (210:2.15) (230:2.1)  (251:2.05) (270:2)(290:1.95) (310:1.95) (330:1.95) (350:1.95) (10:1.95)(30:1.95)(50:1.95) (70:1.95) (90:1.95)(110:1.95)(130:1.95) (150:1.95) (170:1.95)(190:1.9)(210:1.85)(230:1.8) (260:1.8)}; 
   	\draw[DarkRed,densely dashed,  line width=.5mm] plot[smooth, tension=0.6] coordinates { (260:1.8)(270:4.3)} ;  
   	 \draw [DarkRed,dash pattern = on 3mm off .5mm, line width=.5mm] plot [smooth, tension=0.6] coordinates {   (230:3.8) (250:4)(270:4.3)}; 
\draw[fill](270:4.3) circle(1 pt) node[below, scale=1] {$B$};
\draw[fill] (230:3.8)  circle(1 pt) node[left, scale=1] {$p_2$};
\draw[fill] (-1,0) circle(1 pt) node[below, scale=.8] {$p_1$}; 
\draw[fill] (1,0) circle(1 pt) node[below, scale=.8] {$p_3$}; 
\draw[fill] (260:1.8) circle(1 pt) node[above, scale=1] {$q_2$};
\end{tikzpicture}
\hspace{1mm}
\begin{tikzpicture}[scale=.8]
\path (-3,-4) -- (3,-4);   
   	\draw [DarkRed, line width=.5mm] plot [smooth, tension=0.6] coordinates {(90:3) (110:3) (130:3) (150:3) (170:3) (190:3.0) (210:2.9) (230:2.8)(250:2.7) (270:2.7) (290:2.65) (310:2.65) (330:2.65) (350:2.65) (10:2.65) (30:2.65) (50:2.65) (70:2.65) (90:2.65) (110:2.65) (130:2.65) (150:2.65) (170:2.65)(190:2.6) (210:2.55) (230:2.5) (250:2.45)(270:2.4) (290:2.35) (310:2.3) (330:2.25) (350:2.25) (10:2.25) (30:2.25) (50:2.25)(70:2.25)(90:2.25)(110:2.25)(130:2.25) (150:2.25) (170:2.25) (190:2.2) (210:2.15) (230:2.1)   (251:2.05)(270:2)(290:1.95) (310:1.95) (330:1.95) (350:1.95) (10:1.95)(30:1.95)(50:1.95) (70:1.95) (90:1.95) (90:1.95)(110:1.95)(130:1.95)(130:1.95)(150:1.95) (170:1.95)(190:1.9)}; 
   	\draw[DarkRed,densely dashed, line width=.5mm] plot[smooth, tension=0.8] coordinates { (190:1.9)(210:1.85)(230:1.9)  (270:3)} ; 
   	 \draw [DarkRed,dash pattern = on 3mm off .5mm, line width=.5mm] plot [smooth, tension=0.6] coordinates {  (90:3) (110:3.2)(130:3.3)(150:3.3)(170:3.3) (190:3.3) (210:3.2) (230:3.1) (250:3)(270:3)}; 
\draw[fill](270:3) circle(1 pt) node[below, scale=1] {$B$};
\draw[fill] (90:3) circle(1 pt) node[below, scale=1] {$p_2$};
\draw[fill] (-1,0) circle(1 pt) node[below, scale=.8] {$p_1$}; 
\draw[fill] (1,0) circle(1 pt) node[below, scale=.8] {$p_3$}; 
\draw[fill] (190:1.9)circle(1 pt) node[below right, scale=1] {$q_2$};
\end{tikzpicture} 
\hspace{1mm}
\begin{tikzpicture}[scale=.8]
\path(-3,-4) -- (3,-4) ; 
   \draw [DarkRed, line width=.5mm] plot [smooth, tension=0.6] coordinates {(290:3)(310:3)(330:3)(350:3)(10:3)(30:3)(50:3) (70:3) (90:3) (110:3) (130:3) (150:3) (170:3) (190:3.0) (210:2.9) (230:2.8)(250:2.7) (270:2.7) (290:2.65) (310:2.65) (330:2.65) (350:2.65) (10:2.65) (30:2.65) (50:2.65) (70:2.65) (90:2.65) (110:2.65) (130:2.65) (150:2.65) (170:2.65)(190:2.6) (210:2.55) (230:2.5) (250:2.45)(270:2.4) (290:2.35) (310:2.3) (330:2.25) (350:2.25) (10:2.25) (30:2.25) (50:2.25)(70:2.25)(90:2.25)(110:2.25)(130:2.25) (150:2.25) (170:2.25) (190:2.2) (210:2.15) (230:2.1)    (251:2.05)(270:2)(290:1.95) (310:1.95) (330:1.95) (350:1.95) (10:1.95)(30:1.95)(50:1.95) (70:1.95) (90:1.95) }; 
   	\draw[DarkRed,densely dashed, line width=.5mm] plot[smooth, tension=0.6] coordinates {(90:1.95)(110:1.95)(130:1.95) (150:1.95) (170:1.95)(190:1.9)(210:1.85)(230:1.9)  (270:3)} ; 
   	 \draw [DarkRed,dash pattern = on 3mm off .5mm, line width=.5mm] plot [smooth, tension=0.6] coordinates { (290:3)(310:3.2)(330:3.3)(350:3.3)(10:3.3)(30:3.3)(50:3.3) (70:3.3)  (90:3.3) (110:3.3)(130:3.3)(150:3.3)(170:3.3) (190:3.3) (210:3.2) (230:3.1) (250:3)(270:3)}; 
   	 \draw[fill]( 270:3) circle(1 pt) node[below left, scale=1] {$B\sim M_1$};
   	 \draw[fill]  (290:3) circle(1 pt) node[below, scale=1] {$p_2 \sim M_1$};
   	 \draw[fill] (90:1.95)circle(1 pt) node[below, scale=1] {$q_2$};
\end{tikzpicture} 

\begin{tikzpicture}[scale=1]
    \draw (0,0) circle (3cm)
   	circle (.5cm);
   	\draw[dotted] (0cm,-.5cm) -- (250:3cm)  ;
   	\draw[dotted] (0cm,-.5cm) -- (290:3cm) ;	
   	\draw [dotted] plot [smooth, tension=.8] coordinates { (270:.5) (290:.6)(310:.7) (330:.8) (350:.9) (20:1) (40:1)(60:1)(80:1)(100:1) (120:1)(140:1) (160:1.0) (180:1.05) (200:1.17) (220:1.45) (250:3)}; 	  
\draw [DarkRed, line width=.5mm] plot [smooth, tension=0.6] coordinates { (250:3) (270:2.7)  (290:2.5) (310:2.45) (330:2.45) (350:2.45) (10:2.45)(30:2.45)(50:2.45) (70:2.45)(90:2.45)(110:2.45)(130:2.45)(150:2.45) (170:2.4) (190:2.4)(210:2.35) (230:2.25)(250:2.15)(270:2) (290:1.9) (310:1.85) (330:1.85) (350:1.85)(10:1.85) (30:1.85) (50:1.85)(70:1.85)(90:1.85)(110:1.85)(130:1.85) (150:1.85) (170:1.85) (190:1.8) (210:1.75) (230:1.7)  (251:1.65) 
(270:1.6)(290:1.55) (310:1.55) (330:1.55) (350:1.55) (10:1.55)(30:1.55)(50:1.55) (70:1.55) (90:1.55)(110:1.55)(130:1.55) (150:1.55) (170:1.55)(190:1.5)(210:1.45)(230:1.4)(252:1.35)}; 
   	\draw[DarkRed,densely dashed, line width=.5mm] plot[smooth, tension=0.6] coordinates {(252:1.35)(270:1.3)(290:1.3)(310:1.3) (330:1.3)(350:1.3)  (10:1.3)(30:1.3) (50:1.3)(70:1.3)(90:1.3) (110:1.3)(130:1.3) (150:1.3)(170:1.3)(190:1.35) (210:1.5)(230:1.9) (250:3)} ; 
 \draw [DarkRed,dash pattern = on 3mm off .5mm, line width=.5mm] plot [smooth, tension=0.6] coordinates { (250:3) (270:2.8) (290:2.75) (310:2.75) (330:2.75) (350:2.75) (10:2.75) (30:2.75) (50:2.75) (70:2.75) (90:2.75) (110:2.75) (130:2.75) (150:2.75) (170:2.75) (190:2.75) (210:2.7) (230:2.8)(250:3)};
 \draw[fill] (0cm,-.5cm) circle(1pt) node[above, scale=1]{$M_3$};
   	\draw[fill] (250:3) circle(1pt) node[below, scale=1]{$M_1$};		
   	\draw[fill] (290:3) circle(1pt)node[below, scale=1]{$M_2$};	  
   	\draw[fill] (254:1.35) circle(1pt) node[above right, scale=1]{$q_1$};
  \draw [DarkRed, line width=.5mm] (-5,.5) -- (-3.5,.5);   
    \node at (-4.25,.7) {$\text{Sp}_3(M_1,q_1)$} ;
     \draw [DarkRed,densely dashed, line width=.5mm] (-5,0) -- (-3.5,0);  
     \node at (-4.25,.2) {$H(q_1,M_1)$} ; 
      \draw [DarkRed,dash pattern = on 3mm off .5mm, line width=.5mm] (-5,-.5) -- (-3.5,-.5);  
     \node at (-4.25,-.3) {$H(M_1,M_1)$} ; 
\end{tikzpicture}  
 \end{center}

 \end{Exa}

\begin{Rmk} 
Suppose $\alpha$ and $\beta$ are roots of the first type such that for $\alpha$, $a=a_0$, $b=b_0$, and for $\beta$, $a=a'$, $b=b'$. One can always deform $F(\alpha)$ into $\gamma_\alpha^{\text{op}}$ and $F(\beta)$ into $\gamma_\beta^{\text{op}}$ separately by making the identifications $M_a \sim p_{a+b}$ and $M_{a+b+1} \sim B$ (or $M_{1} \sim B$) in each case. However, $F(\alpha)\cup F(\beta)$ cannot be deformed into $\gamma_\alpha^{\text{op}} \cup \gamma_\beta^{\text{op}}$ with these identifications unless $\beta-\alpha \in \{0\} \cup \text{Im}_{+}$.

 If $F(\alpha)\cup F(\beta)$ is deformed into $\gamma_\alpha^{\text{op}} \cup \gamma_\beta^{\text{op}}$, then $M_{a_0+b_0+1}\sim B$ and $ M_{a'+b'+1}\sim B$, which implies $a_0+b_0=a'+b'$. Therefore, $p_{a_0+b_0} = p_{a'+b'}$, which implies $M_{a_0} =M_{a'}$. Hence $a_0=a'$ and so $b_0=b'$. Now observe that $a_0=a'$ and $b_0=b'$ if and only if $\beta-\alpha \in \{0\} \cup \text{Im}_{+}$.

 This natural deformation of $F(\alpha)\cup F(\beta)$ into $\gamma_\alpha^{\text{op}} \cup \gamma_\beta^{\text{op}}$ is what explains the equality $\text{Int} (F(\alpha), F(\beta))=\text{Int}(\gamma_\alpha^{\text{op}}, \gamma_\beta^{\text{op}})$. Indeed, one can check that if $\beta-\alpha \notin \{0\} \cup \text{Im}_{+}$ then in general this equality need not hold.

 \end{Rmk}
  
\section{Proof of Theorem \ref{mainthm} for Roots of Type 2}
Fix $n \geq 3$ and nonnegative integers $m,a,b,c$ such that $a+b+c=n$, $a+c \neq n$, $a\geq 1$, and $c\geq 1$. Then 
\[
\alpha=(m+1,\dots,m+1,m,\dots,m,m+1,\dots,m+1)=((m+1)^a,m^b,(m+1)^c)
\]
is a positive real root of type 2 which is not a real Schur root, and all non-Schur roots of type 2 are of this form. 

\begin{Thm}[\ref{mainthm}(1)] Fix $n \geq 3$ and nonnegative integers $m,a,b,c$ such that $a+b+c=n$, $a+c \neq n$, $a\geq 1$, and $c\geq 1$, and consider the root $\alpha=((m+1)^a,m^b,(m+1)^c)$. Then $R(F(\alpha))=\alpha$.
\end{Thm}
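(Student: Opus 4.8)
The plan is to mirror, line for line, the argument already carried out for roots of type 1. The single computational tool is the explicit action of a simple reflection: $s_i$ alters only the $i$th coordinate of a root $\mathbf{v}=(v^1,\dots,v^n)$, replacing $v^i$ by $v^{i-1}+v^{i+1}-v^i$, with the indices read cyclically so that the neighbors of coordinate $1$ are coordinates $n$ and $2$, and the neighbors of coordinate $n$ are coordinates $n-1$ and $1$. By the definition of an admissible curve and of the map $R$, each crossing of the ray $r_i$ by $F(\alpha)$ contributes a factor $s_i$ to the associated reflection word, and the starting point $p_k$ contributes the base simple root $\alpha_k$; exactly as in the type 1 proof, computing $R(F(\alpha))$ therefore reduces to applying the reflection word determined by the construction of $F(\alpha)$ to $\alpha_k$ and checking that the output is $\alpha$.

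First I would read off, group by group, the reflection word prescribed by the steps defining $F(\alpha)$ for the type 2 root $\alpha=((m+1)^a,m^b,(m+1)^c)$, and track the intermediate root produced after each group. The only structural difference from type 1 is that the block of $(m+1)$'s now occupies the \emph{cyclic} arc $a+b+1,\dots,n,1,\dots,a$ straddling the seam between coordinates $n$ and $1$, rather than an interior block $a+1,\dots,a+b$. After each group of reflections I would verify that the intermediate vector is still a real root of type 1 or 2, which is guaranteed by Proposition \ref{realroot}, so that the reflection formula stays in the regime where it is easy to evaluate. The target of the bookkeeping is to show that after $m$ complete passes the root equals $(m,\dots,m)$ plus a suitable $\{0,1\}$-valued vector, and that a final partial pass turns this into $(m,\dots,m)$ plus the indicator vector of the arc $a+b+1,\dots,n,1,\dots,a$; coordinate by coordinate this indicator adds $1$ on positions $1,\dots,a$ and $a+b+1,\dots,n$ and $0$ on the middle block $a+1,\dots,a+b$, which is precisely $\alpha$.

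The step I expect to be the main obstacle is the wraparound across the seam. In the type 1 analysis the active coordinates never include both of the cyclically adjacent endpoints $1$ and $n$ at once, so the substitution $v^{i-1}+v^{i+1}-v^i$ behaves as if the index line were straight; for type 2 the block being built contains coordinates $1$ and $n$, so I must keep careful track, whenever $i\in\{1,n\}$, of which of $v^{i-1},v^{i+1}$ is the cyclic neighbor across the seam, and confirm that incrementing a coordinate on one side of the seam propagates correctly to the other. I would also isolate the degenerate cases and check them directly, just as the type 1 proof treated $b=1$ separately: here $a+b+c=n$ with $a+c\neq n$ forces $b\geq 1$, and the situation $b=1$ collapses the middle dip to a single coordinate so that the final partial pass is empty, while the constraints $a\geq 1$, $c\geq 1$ rule out the remaining boundary degeneracies. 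Once the seam bookkeeping is pinned down, the concluding verification that the resulting vector equals $\alpha$ is a routine coordinate-by-coordinate comparison requiring no new ideas.
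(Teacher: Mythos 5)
Your verification mechanism --- read the reflection word off the curve group by group, apply $v^i\mapsto v^{i-1}+v^{i+1}-v^i$ with the indices $1$ and $n$ treated as cyclic neighbours, and track the intermediate roots --- is exactly the mechanism of the paper's proof. The genuine gap is that the object to be verified is absent. For type 2 roots the curve $F(\alpha)$ is not defined anywhere before this statement; the paper's proof \emph{simultaneously constructs} $F(\alpha)$ and computes $R(F(\alpha))$, so ``the reflection word prescribed by the steps defining $F(\alpha)$'' is not something you can read off --- it is the thing you must produce. Concretely, the paper starts at $p_1$, crosses $r_2,\dots,r_a$ and then $r_{a+b+1},\dots,r_n$ going right, returns left through $r_{n-1},\dots,r_{a+b+1}$, obtaining the wrapping indicator $\sum_{j=1}^{a}\alpha_j+\sum_{k=a+b+1}^{n}\alpha_k$ already after this initial partial pass, and then repeats a three-stage full pass $m$ times, each repetition adding $(1,\dots,1)$. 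Without some such explicit word there is nothing for your ``routine coordinate-by-coordinate comparison'' to compare.

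The structure you do commit to is moreover inverted relative to the paper's and leaves your self-identified obstacle unresolved. You place the partial pass at the end, claiming it converts $(m,\dots,m)$ plus an unspecified $\{0,1\}$-valued vector into $(m,\dots,m)$ plus the indicator of the arc $a+b+1,\dots,n,1,\dots,a$; but that terminal pass is precisely the seam-straddling step you call ``the main obstacle,'' and you neither exhibit a word for it nor perform the check. (Carrying it out at the end would also require the observation, which the paper records only in its later treatment of Schur roots, that a word of simple reflections sends $(\lambda,\dots,\lambda)+\mathbf{v}$ to $(\lambda,\dots,\lambda)$ plus the image of $\mathbf{v}$, so that the final pass can be evaluated on a simple root.) The plan is the right kind of plan, but as written it defers the entire content of the theorem.
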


\begin{proof}
We simultaneously construct $F(\alpha)$ and prove $R(F(\alpha))=\alpha$.
\begin{enumerate} 
\item Let $F(\alpha)$ start at $p_1$. Going to the right, intersect the rays $r_x$ for $1<x\leq a$, and then for $a+b<x\leq n$. \\
In terms of constructing $R(F(\alpha))$, step (1) corresponds to creating the root $\sum_{j=1}^{a} \alpha_j+\alpha_n$.
\item Going to the left and under the part of $F(\alpha)$ which is already drawn, intersect the rays $r_x$ for $a+b<x<n$.\\
The result of this step is now the root $\sum_{j=1}^{a} \alpha_j+\sum_{k=a+b+1}^{n} \alpha_k$.  
\item Going to the right and over the part of $F(\alpha)$ which is already drawn, intersect the rays $r_x$ for $1\leq x \leq a+b $.\\
The result is  $\sum_{j=1}^{a-1} \alpha_j+\sum_{k=a+b}^{n} \alpha_k$.
\item Going to the left and under the part of $F(\alpha)$ drawn in the preceding step, intersect the rays $r_x$ for $1\leq x<a+b$. \\
The result of this step is the root $(1,\dots,1)+\sum_{j=1}^{a} \alpha_j$.
\item Going to the right, intersect the rays $r_x$ for $a+b<x\leq n$, and then, going to the left, intersect the rays $r_x$ for $a+b<x<n$ (similarly to steps (1) and (2)). \\
The result is the root $(1,\dots,1)+\sum_{j=1}^{a} \alpha_j+\sum_{k=a+b+1}^{n} \alpha_k$. 
\item Repeat steps (3)-(5) $m$ times in total. \\
The final result is the root $(m,\dots,m)+\sum_{j=1}^{a} \alpha_j+\sum_{k=a+b+1}^{n} \alpha_k$, which is indeed $\alpha$. 
\end{enumerate}
\end{proof}

To obtain $\gamma_\alpha^{\text{op}}$, 
\begin{enumerate} 
\item Let $\gamma_\alpha^{\text{op}}$ start at $M_{a+b}$ and, going counter-clockwise around the inner boundary, intersect $l_{a+b}$ in $m$ points, not including $M_{a+b}$.
\item End at $M_{a+1}$.
\end{enumerate}

As in Section 3, a description of the curves in terms of spirals will be used to prove Theorem \ref{mainthm}(2). In the preceding section, we explicitly counted the elements of $\gamma_\alpha^{\text{op}} \cap \gamma_\beta^{\text{op}}$ and of $F(\alpha) \cap F(\beta)$ to prove the equality $ \text{Int}(\gamma_\alpha^{\text{op}},\gamma_\beta^{\text{op}})=\text{Int}(F(\alpha),F(\beta))$. We then explained that the equality was due to an intersection-preserving deformation between $\gamma_\alpha^{\text{op}} \cup \gamma_\beta^{\text{op}}$ and $F(\alpha) \cup F(\beta)$. The concept of the proof of Theorem \ref{type22} is exactly the same as that of Theorem \ref{type12}, but we will omit the explicit computation and simply explain the deformation. 

First, let $m>0$ and consider $\alpha=((m+1)^a,m^b,(m+1)^c)$. Then $\gamma_\alpha^{\text{op}}=\text{Sp}_m(M_{a+b},w_{a+b})\cup H(w_{a+b},M_{a+1})$ for some $w_{a+b} \in l_{a+b}-\{M_n, M_{a+b}\}$. If $\beta=\alpha+(\lambda,\dots,\lambda)$ for $\lambda \geq 0$,  then $\gamma_\beta^{\text{op}}=\text{Sp}_{m+\lambda}(M_{a+b},q_{a+b})\cup H(q_{a+b},M_{a+1})$ for some $q_{a+b}$ in $l_{a+b}-\{M_n, M_{a+b}\}$. In the case $m=0$, let us say $\gamma_\alpha^{\text{op}}=\text{Sp}_0(M_{a+b},M_{a+b})\cup  H(M_{a+b},M_{a+1})= H(M_{a+b},M_{a+1})$. $\text{Sp}_0(M_{a+b},M_{a+b})$ should be interpreted as the point $M_{a+b}$.

Again assume $m>0$. When drawing $\gamma_\alpha^{\text{op}} \cup \gamma_\beta^{\text{op}}$, we can always choose $q_{a+b}$ to be between $w_{a+b}$ and $M_n$, and $w_{a+b}$ to be between the $(m-1)$th and $m$th intersection of $\gamma_\beta^{\text{op}}$ with $l_{a+b}$. This amounts to choosing representatives of the respective homotopy classes which have the fewest pairwise intersections, and so we will always assume it is done. In particular, it ensures that $ \text{Sp}_m(M_{a+b},w_{a+b}) \cap \text{Sp}_{m+\lambda}(M_{a+b},q_{a+b})=H(w_{a+b},M_{a+1}) \cap H(q_{a+b},M_{a+1})=\emptyset$.  If $m=0$, then any choice of $q_{a+b}\in l_{a+b}-\{M_n, M_{a+b}\}$ results in zero pairwise intersections between $\gamma_\alpha^{\text{op}}$ and $\gamma_\beta^{\text{op}}$.

When $m>0$, the segment of $F(\alpha)$ which results from steps (3)-(6) has some initial point $w_1 \in r_1-\{p_1\}$. Denote by  $H(p_1,w_1)$ the segment of $F(\alpha)$ between $p_1$ and $w_1$, inclusive. Call the segment of $F(\alpha)$ which results from steps (3)-(6) $\text{Sp}_m(w_1,B)$. So $F(\alpha)=H(p_1,w_1) \cup \text{Sp}_m(w_1,B)$. Suppose $\beta=\alpha+(\lambda,\dots,\lambda)$ for $\lambda \geq 0$. Then $F(\beta)=H(p_1,q_1)\cup \text{Sp}_{m+\lambda}(q_1,B)$ where $q_1 \in r_1 -\{p_1\}$ can be chosen so that $\text{Im}(p_1)<\text{Im}(q_1)<\text{Im}(w_1)$. This enables us to draw $F(\alpha)\cup F(\beta)$ in such a way that  $H(p_1,w_1) \cap H(p_1,q_1)=\emptyset$ and $\text{Sp}_m(w_1,B) \cap \text{Sp}_{m+\lambda}(q_1,B) = \emptyset$. If $m=0$, then the process for creating $F(\alpha)$ terminates after step (2). In this case, by definition write $F(\alpha)=H(p_1,B) \cup \text{Sp}_0(B,B)=H(p_1,B)$. We interpret $\text{Sp}_0(B,B)$ to be the point $B$. $F(\alpha)$ and $F(\beta)$ will always have zero pairwise intersections in this case.

\begin{Exa}\label{spiral} Let $\alpha=2112$. Then $F(\alpha)=H(p_1,w_1) \cup \text{Sp}_1(w_1,B)$. 
 \begin{center}
\begin{tikzpicture}[scale=1.5]   
\fill[Linen] (-1,0) rectangle (7,3);
\draw[densely dotted] (0,1.5) -- (0,3) node[below left, scale=.8]{$r_1$}; 
\draw[densely dotted] (2,1.5) -- (2,3) node[below left, scale=.8]{$r_2$}; 
\draw[densely dotted] (4,1.5) -- (4,3) node[below left, scale=.8]{$r_3$}; 
\draw[densely dotted] (6,1.5) -- (6,3) node[below left, scale=.8]{$r_4$}; 
\draw[-] (-1,0) -- (7,0);
\draw [DarkRed,densely dashed, line width=.5mm] plot [smooth, tension=0.6] coordinates {(0,1.5)(1,1.2)(2,1.1)(3,1.1)(4,1.2)(4.5,1.3)(5,1.5) (5.7,1.8)(6,1.9) (6.3,1.9)(6.5,1.7)(6.55,1.5)(6.45,1.25)(6.2,1)(6,.9)(5.5,.8)(5,.75)(4,.7)(3,.7)(2,.7)(1,.7)(.5,.75)(0,.9)(-.3,1.1)(-.4,1.3)(-.4,1.5)(-.25,1.8)(0,2)};
\node[scale=1]  at (5.8,2.1){$H(p_1,w_1)$}; 
\draw [DarkRed, line width=.5mm] plot [smooth, tension=0.6]
 coordinates{(0,2)(.5,2.2)(1,2.25)(2,2.3)(3,2.25)(3.9,2)(4,1.95) (4.25,1.75)(4.2,1.5)(4,1.4)(3.7,1.4)(3.3,1.5) (3,1.6) (2,1.8)(1,1.8)(0,1.7)(-.2,1.5)(-.13,1.2)(0,1.1) (.4,1)(1.5,.9)(2,.9)(3,.9)(4,1)(5,1.3)(6,1.7)(6.3,1.6)(6.3,1.4)(6.2,1.2)(6,1)(5.5,.6)(5,.4)(4.5,.3)(3,0) };
 \node[scale=1]  at (1,2.4){$\text{Sp}_1(w_1,B)$}; 
 \draw[fill] (3,0) circle(1 pt) node[below, scale=.8] {$B$};
 \draw[fill] (0,1.5) circle(1 pt) node[below, scale=.8] {$p_1$}; 
\draw[fill] (2,1.5) circle(1 pt) node[below, scale=.8] {$p_2$};
\draw[fill] (4,1.5) circle(1 pt) node[below, scale=.8] {$p_3$};
\draw[fill] (6,1.5) circle(1 pt) node[below, scale=.8] {$p_4$};  
\draw[fill] (0,2) circle(1 pt) node[above, scale=1] {$w_1$};  
\end{tikzpicture}
\end{center}
\end{Exa}

\begin{Thm}[\ref{mainthm}(2)]\label{type22} Let $\alpha$ and $\beta$ be given such that $\alpha=((m+1)^a,m^b,(m+1)^c)$ and $\beta-\alpha\in \{0\} \cup \text{Im}_{+}$. Then $\text{Int} (F(\alpha),F(\beta))=\text{Int}(\gamma_\alpha^{\text{op}},\gamma_\beta^{\text{op}})$.
\end{Thm}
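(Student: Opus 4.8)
The plan is to run the argument of Theorem \ref{type12} essentially verbatim, adjusting only for the fact that in the type 2 curves the spiral and the $H$-segment occur in the opposite order. First I would dispose of the case $m=0$: here the construction of $F(\alpha)$ halts after step (2), so $F(\alpha)=H(p_1,B)$ carries no spiral and, as already noted in the setup, $\text{Int}(F(\alpha),F(\beta))=0$; on the annulus side $\text{Sp}_0$ degenerates to the point $M_{a+b}$, so $\text{Int}(\gamma_\alpha^{\text{op}},\gamma_\beta^{\text{op}})=0$ as well, and the two sides agree. So assume $m>0$.

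For $m>0$ I would use the decompositions already recorded, namely
\[
\gamma_\alpha^{\text{op}}=\text{Sp}_m(M_{a+b},w_{a+b})\cup H(w_{a+b},M_{a+1}),\qquad F(\alpha)=H(p_1,w_1)\cup\text{Sp}_m(w_1,B),
\]
together with their $\beta$-counterparts, obtained by replacing $m$ with $m+\lambda$. With the drawing conventions fixed in the preceding paragraphs the spiral--spiral and the $H$--$H$ terms vanish on both surfaces, so each intersection number reduces to the sum of the two mixed spiral--$H$ terms. The ``next intersection along $H$'' and ``previous intersection along the spiral'' bijections used in Theorem \ref{type12} then carry over word for word, showing each mixed term equals $m$; hence $\text{Int}(\gamma_\alpha^{\text{op}},\gamma_\beta^{\text{op}})=2m=\text{Int}(F(\alpha),F(\beta))$.

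Rather than reprint those bijections, the route I would actually present --- following the remark after Theorem \ref{type12} --- is a single intersection-preserving deformation of $F(\alpha)\cup F(\beta)$ onto $\gamma_\alpha^{\text{op}}\cup\gamma_\beta^{\text{op}}$. The one structural change from type 1 is the endpoint identification: the plane spiral now sits at the \emph{terminal} endpoint $B$ (see Example \ref{spiral}) whereas the annulus spiral sits at the \emph{initial} endpoint $M_{a+b}$, so the deformation must identify $B\sim M_{a+b}$ and $p_1\sim M_{a+1}$, thereby reversing the orientation of the plane curve. Under these identifications $\text{Sp}_m(w_1,B)$ matches $\text{Sp}_m(M_{a+b},w_{a+b})$, $H(p_1,w_1)$ matches $H(w_{a+b},M_{a+1})$, and likewise for $\beta$; thus $F(\alpha)$ deforms to $\gamma_\alpha^{\text{op}}$ and $F(\beta)$ to $\gamma_\beta^{\text{op}}$ with no crossing created or destroyed, and the desired equality follows.

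The step I expect to require the most care is the bookkeeping forced by this orientation reversal combined with the winding of the $H$-segments around the inner boundary, and in particular the sub-case $b=1$. There $M_{a+1}=M_{a+b}$, so $\gamma_\alpha^{\text{op}}$ begins and ends on the same arc and $H(w_{a+b},M_{a+1})$ closes up after a full loop; one must check that neither the bijections nor the deformation introduces spurious crossings in this situation, exactly as the degenerate case $a+b+1=n$ had to be treated separately in Theorem \ref{type12}.
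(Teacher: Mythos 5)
Your proposal is correct and follows essentially the same route as the paper: the paper likewise disposes of the degenerate $m=0$ case via the conventions already set up, omits the explicit spiral--$H$ bijection count (noting it is identical in concept to Theorem \ref{type12}), and proves the equality by the intersection-preserving deformation with exactly the identifications $B\sim M_{a+b}$ and $p_1\sim M_{a+1}$, matching $\text{Sp}_m(w_1,B)$ with $\text{Sp}_m(M_{a+b},w_{a+b})$ and $H(p_1,w_1)$ with $H(w_{a+b},M_{a+1})$. Your extra caution about $b=1$ is reasonable but the paper handles it implicitly via the remark that $a+1\leq a+b<n$ forces a single version of the deformation.
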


\begin{proof}
Let $0\leq \lambda$ be given, so that $F(\alpha)=H(p_1,w_1) \cup \text{Sp}_m(w_1,B)$ and $F(\beta)=H(p_1,q_1)\cup \text{Sp}_{m+\lambda}(q_1,B)$. We understand that $m$ could be zero, in which case $H(p_1,w_1)=H(p_1,B)$ and $\text{Sp}_m(w_1,B)$ degenerates into the point $B$. Recall that $\gamma_\alpha^{\text{op}}=\text{Sp}_m(M_{a+b},w_{a+b})\cup H(w_{a+b},M_{a+1})$ and $\gamma_\beta^{\text{op}}=\text{Sp}_{m+\lambda}(M_{a+b},q_{a+b})\cup H(q_{a+b},M_{a+1})$. If $m=0$ then the expression for $\gamma_\alpha^{\text{op}}$ is understood to be $H(M_{a+b},M_{a+1})$.
Like for roots of the first type, there is a natural deformation of  $F(\alpha) \cup F(\beta)$ into $\gamma_\alpha^{\text{op}} \cup \gamma_\beta^{\text{op}}$ which preserves intersections. Identify $p_1$ with the common terminal point $M_{a+1}$ of $\gamma_\alpha^{\text{op}}$ and $\gamma_\beta^{\text{op}}$ by carrying $p_1$ counter-clockwise over $F(\alpha)\cup F(\beta)$ (see Example \ref{deform}). Identify $B$ with the common initial point $M_{a+b}$.  In the end, $H(p_1,w_1)$ is identified with $H(w_{a+b}, M_{a+1})$ and $\text{Sp}_m(w_1,B)$ with  $\text{Sp}_m(M_{a+b},w_{a+b})$, so $F(\alpha)$ is deformed into $\gamma_\alpha^{\text{op}}$. Similarly, $H(p_1,q_1)$ is identified with $H(q_{a+b},M_{a+1})$ and $\text{Sp}_{m+\lambda}(q_1,B)$ with $\text{Sp}_{m+\lambda}(M_{a+b},q_{a+b})$ (see Example \ref{deform}), so $F(\beta)$ is deformed into $\gamma_\beta^{\text{op}}$. This deformation does not introduce nor remove intersections between the two curves, so we conclude that $\text{Int}(F(\alpha), F(\beta))=\text{Int}(\gamma_\alpha^{\text{op}}, \gamma_\beta^{\text{op}}) $.
\end{proof}

\begin{Rmk} Because $a+1 \leq a+b < n$, the terminal point $M_{a+1}$ lies always to the left of the initial point $M_{a+b}$ on $(\mathcal{S},\mathcal{M})^{\text{op}}$'s outer boundary. By contrast, for roots of type 1, the terminal point could be either on the left or on the right. This is the reason we have to distinguish two different versions of the deformation for roots of type 1, but not for roots of type 2.
\end{Rmk}

\begin{Exa}\label{deform} Let $\alpha=110011$ and $\beta= 221122$. In this example, $m=0$, $\lambda=1$, $a=2$, and $b=2$. $F(\alpha)=H(p_1,B)$ and $F(\beta)=H(p_1,q_1) \cup \text{Sp}_1(q_1,B)$. In the deformation of $F(\alpha) \cup F(\beta)$ into $\gamma_\alpha^{\text{op}} \cup \gamma_\beta^{\text{op}}$, $p_1$ is carried over $F(\alpha) \cup F(\beta)$ counter-clockwise to be identified with $M_3$. 
\begin{center}
\begin{tikzpicture}[scale=1.5]
\fill[Linen] (-1,0) rectangle (8.5,3.5);

\draw[-] (-1,0) -- (8.5,0);
\draw[densely dotted] (0,1.5) -- (0,3.5) node[below left, scale=.8]{$r_1$}; 
\draw[densely dotted] (1.5,1.5) -- (1.5,3.5) node[below left, scale=.8]{$r_2$}; 
\draw[densely dotted] (3,1.5) -- (3,3.5) node[below left, scale=.8]{$r_3$}; 
\draw[densely dotted] (4.5,1.5) -- (4.5,3.5) node[below left, scale=.8]{$r_4$}; 
\draw[densely dotted] (6,1.5) -- (6,3.5) node[below left, scale=.8]{$r_3$}; 
\draw[densely dotted] (7.5,1.5) -- (7.5,3.5) node[below left, scale=.8]{$r_4$}; 
\draw [DarkRed,densely dashed, line width=.5mm] plot [smooth, tension=0.6] coordinates {(0,1.5)(1.5,1.6)(3,1.1)(4.5,1.1)(5.6,2.3) (6.7 ,2.5)(7.5,2.2)(7.9,1.5)(7.5,1.1) (6.4,1.7)( 6,1.75 )(5.4,1.5)(4,.6)(1,.7)(0,.8)(-.3,1)(-.4,1.3)(-.4,1.5)(-.25,1.8)(0,2)};
\draw [DarkRed, line width=.5mm] plot [smooth, tension=0.6]
 coordinates{(0,2)(.5,2.2)(1,2.25)(2,2.3)(3,2.25)(4,2.1) (4.6,1.75)(4.7,1.5)(4.5,1.3)(3.9,1.3)(3.3,1.5) (3,1.6) (2,1.8)(1,1.9)(0,1.7)(-.2,1.4) (-.1,1.1)(.4,1)(3.5,.7) (4.7,1)(5.5,2) (6.5,2.3)(7.5,2)(7.8,1.5)(7.5,1.3)(7,1.5)(6.5,1.8)(6,1.9)(5.5,1.6)(5,1)(4.5,.5)(3.75,0) };
 \draw[fill] (0,2 ) circle(1 pt) node[above right, scale=1] {$q_1$} ; 
\draw[DarkRed,dash pattern = on 3mm off .5mm, line width=0.5mm] plot [smooth, tension=0.6] coordinates{(0,1.5)(1.5,1.7)(3,1.2)(4.5,1.2)(5.6,2.4)(6.7,2.6)(7.6,2.3)(8.05,1.6)(7.6,.9 )(6,1.6)(4.8,.6) (3.75,0)};
\draw[fill] (0,1.5) circle(1 pt) node[below, scale=1] {$p_1$}; 
\draw[fill] (1.5,1.5) circle(1 pt) node[below, scale=.8] {$p_2$};
\draw[fill] (3,1.5) circle(1 pt) node[below, scale=.8] {$p_3$};
\draw[fill] (4.5,1.5) circle(1 pt) node[below, scale=.8] {$p_4$};
\draw[fill] (6,1.5) circle(1 pt) node[below, scale=.8] {$p_5$};
\draw[fill] (7.5,1.5) circle(1 pt) node[below, scale=.8] {$p_6$};
\draw[fill] (3.75,0) circle(1 pt) node[below, scale=1] {$B$}; 
 \node[scale=1]  at (1,2.4){$\text{Sp}_1(q_1,B)$}; 
  \node[scale=1]  at (1,.5){$H(p_1,q_1)$}; 
   \node[scale=1]  at (7.3,.7){$H(p_1,B)$}; 
\end{tikzpicture} 
\vspace{5mm}  
\begin{tikzpicture}[scale=1.5]
\fill[Linen] (-1,0) rectangle (8.5,3.5);

\draw[-] (-1,0) -- (8.5,0);

\draw [DarkRed,densely dashed, line width=.5mm] plot [smooth, tension=0.6] coordinates {(0,2.5)(1.5,2.6)(5.6,2.7) (6.7 ,2.6)(7.5,2.2)(7.9,1.5)(7.5,1.1) (6.4,1.7)( 6,1.75 )(5.4,1.5)(4,.6)(1,.7)(0,.8)(-.3,1)(-.4,1.3)(-.4,1.5)(-.25,1.8)(0,2)};
\draw [DarkRed, line width=.5mm] plot [smooth, tension=0.6]
 coordinates{(0,2)(.5,2.2)(1,2.25)(2,2.3)(3,2.25)(4,2.1) (4.6,1.75)(4.7,1.5)(4.5,1.3)(3.9,1.3)(3.3,1.5) (3,1.6) (2,1.8)(1,1.8) (.1,1.5)(.4,1)(3.5,.7) (4.7,1)(5.5,2) (6.5,2.3)(7.5,2)(7.8,1.5)(7.5,1.3)(7,1.5)(6.5,1.8)(6,1.9)(5.5,1.6)(5,1)(4.5,.5)(3.75,0) };
 \draw[fill] (0,2) circle(1 pt) node[above right, scale=1] {$q_1$} ; 
\draw[DarkRed,dash pattern = on 3mm off .5mm, line width=0.5mm] plot [smooth, tension=0.6] coordinates{(0,2.5)(1.5,2.7)(5.6,2.8)(6.7,2.7)(7.6,2.3)(8.05,1.6)(7.6,.9)(6,1.6)(4.8,.6) (3.75,0)};
\draw[fill] (0,2.5) circle(1 pt) node[below, scale=1] {$p_1$}; 
\draw[fill] (1.5,1.5) circle(1 pt) node[below, scale=.8] {$p_2$};
\draw[fill] (3,1.5) circle(1 pt) node[below, scale=.8] {$p_3$};
\draw[fill] (4.5,1.5) circle(1 pt) node[below, scale=.8] {$p_4$};
\draw[fill] (6,1.5) circle(1 pt) node[below, scale=.8] {$p_5$};
\draw[fill] (7.5,1.5) circle(1 pt) node[below, scale=.8] {$p_6$};
\draw[fill] (3.75,0) circle(1 pt) node[below, scale=1] {$B$}; 
\end{tikzpicture} 
\vspace{5mm}  
\begin{tikzpicture}[scale=1.5]

\draw [DarkRed,densely dashed, line width=.5mm] plot [smooth, tension=0.6] coordinates {(0,2.5)(2,2.7)(5.4,2.4) (5.4,1.2)(3.5,.6)(1.2,.6)(0,.8)(-.3,1)(-.4,1.3)(-.4,1.5)};
\draw [DarkRed, line width=.5mm] plot [smooth, tension=0.6]
 coordinates{(-.4,1.5)(-.3,1.8)(0,2.1)(1,2.3 ) ( 3,2.35)(4,2.25 ) (4.7,2)(5,1.5)(4.5,.6)(3.75,0) };
\draw[DarkRed,dash pattern = on 3mm off .5mm, line width=0.5mm] plot [smooth, tension=0.6] coordinates{(0,2.5)(2,2.8)(5.6,2.7)(6,1.3)(3.75,0)};
\draw[fill] (1.5,1.5) circle(1 pt) node[below, scale=.8] {$p_4$};
\draw[fill] (4.5,1.5) circle(1 pt) node[below, scale=.8] {$p_6$};
\draw[fill] (3.75,0) circle(1 pt) node[below, scale=1] {$B$}; 
 \draw[fill] (0,2.5) circle(1 pt) node[above right, scale=1] {$p_1$} ; 
  \draw[fill] (-.4,1.5) circle(1 pt) node[right, scale=1] {$q_1$} ; 
\end{tikzpicture} 
\vspace{5mm}

\begin{tikzpicture}[scale=1.2] 
\centering
    \draw (0,0) circle (3cm)
   	circle (.5cm);
   	\draw[dotted] (0cm,-.5cm) -- (250:3cm)  ; 
   	\draw[dotted] (0cm,-.5cm) -- (260:3cm)  ;
   	\draw[dotted] (0cm,-.5cm) -- (270:3cm)  ;
   	\draw[dotted] (0cm,-.5cm) -- (280:3cm)  ;
   	\draw[dotted] (0cm,-.5cm) -- (290:3cm) ; 
   	\draw [dotted] plot [smooth, tension=.8] coordinates { (270:.5) (290:.6)(310:.7) (330:.85) (350:1) (20:1.2) (40:1.35)(60:1.4)(80:1.4)(100:1.4) (120:1.4)(140:1.4) (160:1.4) (180:1.4) (200:1.45) (220:1.65) (250:3)}; 	 
   	\draw[dotted] (0cm,-.5cm) -- (0cm,-3cm);
   	\draw [DarkRed,dash pattern = on 3mm off .5mm, line width=0.5mm] plot [smooth, tension=0.75] coordinates { (280:3)(345:2.7)(50:2.75) (110:2.75) (170:2.75) (220:2.75)(270:3)};
   	\draw [DarkRed, line width=0.5mm] plot [smooth, tension=0.6] coordinates { (280:3)  (310:2.1) (330:1.9) (350:1.8) (10:1.8) (30:1.8) (50:1.8) (70:1.8) (90:1.8) (110:1.8) (130:1.8) (150:1.8) (170:1.8) (190:1.8) (210:1.8) (230:1.8) (250:1.8)(270:1.8)(279:1.8) };
 \draw [DarkRed,densely dashed, line width=0.5mm] plot [smooth, tension=0.6] coordinates {	(279:1.8) (310:1.9) (330:2) (350:2.1) (10:2.15)(30:2.2)(50:2.2) (70:2.2)(90:2.2)(110:2.2)(130:2.2)(150:2.2) (170:2.2) (190:2.2)(210:2.25)(230:2.3)(270:3) };
   \draw[fill] (279:1.8)  circle(1 pt) node[above left,scale=1] {$q_1 \sim q_4$};  	
\draw[fill] (280:3) circle(1pt)node[below right, scale=1]{$B \sim M_4$};		
   	\draw[fill] (0cm,-3cm) circle(1pt) node[below, scale=1]{$p_1 \sim M_{3}$};   
   	  \draw[fill] (0cm,-.5cm) circle(1pt) ;
   	\draw[fill] (250:3) circle(1pt) ;		
   	\draw[fill] (0cm,-3cm) circle(1pt) ;  
   	\draw[fill] (290:3) circle(1pt);
   	\draw[fill] (260:3) circle(1pt);
   	\draw [DarkRed,densely dashed, line width=.5mm] (-5,0) -- (-3.5,0);   
    \node at (-4.25,.2) {$H(q_4,M_3)$} ;
     \draw [DarkRed, line width=.5mm] (-5,.5) -- (-3.5,.5);  
     \node at (-4.25,.7) {$\text{Sp}_1(M_4,q_4)$} ; 
      \draw [DarkRed,dash pattern = on 3mm off .5mm, line width=.5mm] (-5,-.5) -- (-3.5,-.5);  
     \node at (-4.25,-.3) {$H(M_4,M_3)$} ;  
   	\end{tikzpicture} 
\end{center}
\end{Exa}
 
 \begin{Rmk}
It is not possible to apply this deformation to a pair of curves which correspond to $(\alpha,\beta)$, for $\beta-\alpha \notin \{0\} \cup \text{Im}_{+}$. The deformation relies on making the identifications $M_{a+1} \sim p_1$ and $M_{a+b} \sim B$. Let $\alpha$ and $\beta$ be arbitrary roots of type 2 such that for $\alpha$, $a=a_0$, $b=b_0$, and for $\beta$, $a=a'$, $b=b'$. If $F(\alpha)\cup F(\beta)$ is successfully deformed into $\gamma_\alpha^{\text{op}} \cup \gamma_\beta^{\text{op}}$, then $M_{a_0+b_0}=M_{a'+b'}$, which implies $a_0+b_0=a'+b'$. But since $M_{a_0+1}=M_{a'+1}$, $a_0=a'$. It follows that $b_0=b'$, and therefore $\beta-\alpha \in \{0\} \cup \text{Im}_{+}$.
\end{Rmk}

\section{Proof of Theorem \ref{mainthm} for Schur Roots}
Fix $m\geq 0$, $n\geq 3$ and $a, b \geq 1$ such that $a+b=n$ and consider a Schur root 

\[\alpha=(m+1,\dots,m+1,m,\dots,m)=((m+1)^a,m^b).
\]

To construct $F(\alpha)$,
\begin{enumerate}
\item Start at $p_a$ and, going counter-clockwise, intersect all rays $r_x$ for $1\leq x< a$.
\item Going counter-clockwise, intersect all rays $r_x$, for $1\leq x \leq n$, $m$ times.
\item Shift the initial point to the left $m$ places from $p_a$ to $p_{a-m}$ (where $a-m$ is computed mod $n$ and we set $p_0=p_n$).
\end{enumerate}

\begin{Exa} Some examples for $n=3$. In order to illustrate the effect of step (3), the result of only steps (1) and (2) is shown with a dotted curve in each case.
\vspace{2mm}
\begin{center}
\begin{tikzpicture}[scale=1]  
\fill[Linen] (-1,0) rectangle (5,3);
\draw[-] (-1,0) -- (5,0);
\draw[densely dotted] (0,1.5) -- (0,3) node[below left, scale=.8]{$r_1$}; 
\draw[densely dotted] (2,1.5) -- (2,3) node[below left, scale=.8]{$r_2$}; 
\draw[densely dotted] (4,1.5) -- (4,3) node[below left, scale=.8]{$r_3$}; 
\draw [DarkRed, line width=0.5mm] plot [smooth, tension=0.6] coordinates { (0,1.5) (2,0)}; 
\node at (2,-.5) {$100$}; 
\draw[fill] (0,1.5) circle(1 pt) node[below, scale=.8] {$p_1$}; 
\draw[fill] (2,1.5) circle(1 pt) node[below, scale=.8] {$p_2$};
\draw[fill] (4,1.5) circle(1 pt) node[below, scale=.8] {$p_3$};
\draw[fill] (2,0) circle(1 pt) node[below, scale=.8] {$B$}; 
\end{tikzpicture}
\hspace{2mm}
\begin{tikzpicture}[scale=1]
\fill[Linen] (-1,0) rectangle (5,3); 
\draw[-] (-1,0) -- (5,0);
\draw[densely dotted] (0,1.5) -- (0,3) node[below left, scale=.8]{$r_1$}; 
\draw[densely dotted] (2,1.5) -- (2,3) node[below left, scale=.8]{$r_2$}; 
\draw[densely dotted] (4,1.5) -- (4,3) node[below left, scale=.8]{$r_3$}; 
\draw [DarkRed,line width=0.5mm] plot [smooth, tension=0.6] coordinates { (4,1.5)  (4,1.8) (3,2) (2,2.05) (1,2) (0,1.8) (-.4,1.6) (-.45,1.2) (0,.8) (2,0)}; 
\node at (2,-.5) {$211$}; 
\draw [DarkRed, loosely dotted, line width=.5mm] plot [smooth, tension=0.6] coordinates {(0,1.5) (1,1) (2,.8) (3,.8) (4,.9) (4.5,1.2) (4.5, 1.6) (4,2) (3,2.15) (2,2.2) (1,2.15) (0,2) (-.5,1.7) (-.6,1.4) (-.5,1)(0,.65) (2,0)} ; 
\draw[fill] (0,1.5) circle(1 pt) node[below, scale=.8] {$p_1$}; 
\draw[fill] (2,1.5) circle(1 pt) node[below, scale=.8] {$p_2$};
\draw[fill] (4,1.5) circle(1 pt) node[below, scale=.8] {$p_3$};
\draw[fill] (2,0) circle(1 pt) node[below, scale=.8] {$B$}; 
\end{tikzpicture}

\begin{tikzpicture}[scale=2] 
\fill[Linen] (-1,0) rectangle (5,3);
\draw[-] (-1,0) -- (5,0);
\draw[densely dotted] (0,1.5) -- (0,3) node[below left, scale=.8]{$r_1$}; 
\draw[densely dotted] (2,1.5) -- (2,3) node[below left, scale=.8]{$r_2$}; 
\draw[densely dotted] (4,1.5) -- (4,3) node[below left, scale=.8]{$r_3$}; 
\draw [DarkRed, line width=0.5mm] plot [smooth, tension=0.6] coordinates { (4,1.5) (2,1.7) (0,1.8) (-.4,1.3) (1,1) (3,.9)(4,.95) (4.4,1.3) (4,1.8) (2,2) (1,2.05) (0.3,2.05) (-.35,1.9)(-.75,1.5) (-.4,1) (2,0)}; 
\draw [DarkRed, loosely dotted, line width=.5mm] plot [smooth, tension=0.6] coordinates {(2,1.5) (1,1.6) (0,1.65)(-.25,1.6)(-.3,1.4)(0,1.25)(1,1.15)(2,1.1) (3.5,1.1)(4,1.2)(4.2,1.4) (4.2,1.5) (4,1.65) (3,1.8) (2,1.85) (0,1.9)  (-.6,1.4) (0,1)(1,.8)(3,.7) (4.1,.8) (4.6,1.3)  (4,2) (2,2.2) (1,2.2) (0,2.15) (-.5,2)(-.8,1.7) (-.9,1.4) (-.7,1)(-.1,.7) (2,0)}; 
\node at (2,-.3) {$332$};
\draw[fill] (2,0) circle(1 pt) node[below, scale=.8] {$B$}; 
\draw[fill] (0,1.5) circle(1 pt) node[below, scale=.8] {$p_1$}; 
\draw[fill] (2,1.5) circle(1 pt) node[below, scale=.8] {$p_2$};
\draw[fill] (4,1.5) circle(1 pt) node[below, scale=.8] {$p_3$};
\end{tikzpicture}
\end{center}  
\end{Exa}

Let $\widehat{F(\alpha)}$ denote the curve which is formed through steps (1) and (2) of constructing $F(\alpha)$. Observe that 

\begin{align*} 
S(\widehat{F(\alpha)}) &= \cdots s_n s_1 s_2 \cdots s_{n-1}s_n s_1 \cdots s_{a-1}\alpha_a \\
&=(s_1\cdots s_n)^m(s_1\cdots s_{a-1})\alpha_a.
\end{align*}

Step (3) means removing the first (from the right, starting with $\alpha_a$) $m$ terms of $S(\widehat{F(\alpha)})$. In other words, $S(F(\alpha))$ is  $S(\widehat{F(\alpha)})$ with the first $m$ terms removed. When $n$ and $a$ are fixed, we can consider the set of expressions $\{S(F(\alpha))\thinspace|\thinspace \alpha=((m+1)^a,m^b)\}$ as $m$ varies. To emphasize the dependence on $m$, we introduce a new notation $\mathcal{S}_m$ for $S(F(\alpha))$. We will denote by $\mathcal{R}_m$ the reflection with the same indices.

\begin{Exa}\label{sequence} Fix $n=4$ and $a=2$.

\begin{center}
\begin{tabular}{llll}
	 & $\alpha$ & $\mathcal{S}_m$ & $\mathcal{R}_m$ \\
	$m=0$ & 1100 & $s_1\alpha_2$ & $s_1s_2$\\
	$m=1$ & 2211 & $s_1s_2s_3s_4\alpha_1$ & $s_1s_2s_3s_4s_1$ \\
	$m=2$ & 3322 & $s_1s_2s_3s_4s_1s_2s_3\alpha_4$ & $s_1s_2s_3s_4s_1s_2s_3s_4$ \\
	$m=3$ & 4433 & $s_1s_2s_3s_4s_1s_2s_3s_4s_1s_2\alpha_3$ & $s_1s_2s_3s_4s_1s_2s_3s_4s_1s_2s_3$\\
\end{tabular}
\end{center}
\end{Exa}

\begin{Thm}[\ref{mainthm}(1)] Fix $m\geq 0$, $n\geq 3$ and $a, b \geq 1$ such that $a+b=n$ and consider the root $\alpha=((m+1)^a,m^b)$. Then $R(F(\alpha))=\alpha$.
\end{Thm}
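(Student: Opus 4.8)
The plan is to turn the statement into a single Weyl-group evaluation and then carry it out with an invariant that absorbs the cyclic ``seam'' of the affine diagram. By the construction of $F(\alpha)$ and the paragraph preceding the theorem, $R(F(\alpha))=\mathcal{S}_m(\alpha_k)$, where $\mathcal{S}_m=s_{j_1}s_{j_2}\cdots s_{j_L}$ is the length-$L$ initial segment of the periodic word $s_1s_2\cdots s_n s_1 s_2\cdots$ (so $j_s\equiv s \pmod n$, $j_s\in\{1,\dots,n\}$), with $L=(a-1)+(n-1)m$ and $k=j_{L+1}$. For $t\geq 1$ set $\beta_t:=s_{j_1}\cdots s_{j_{t-1}}(\alpha_{j_t})$; then
\[
R(F(\alpha))=\beta_{L+1},\qquad L+1=a+(n-1)m .
\]
So it suffices to identify the single root $\beta_{L+1}$.

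First I would record two reduction tools. Writing $c:=s_1s_2\cdots s_n$ for the Coxeter element, periodicity of the word ($j_{s+n}=j_s$) gives $s_{j_1}\cdots s_{j_{t+n-1}}=c\,(s_{j_1}\cdots s_{j_{t-1}})$, hence the recursion $\beta_{t+n}=c(\beta_t)$ for all $t\geq 1$. Next, a direct telescoping computes the base cases $\beta_1,\dots,\beta_n$: setting $\gamma_j:=\alpha_1+\cdots+\alpha_j$ (with $\gamma_0=0$) and $\delta:=(1,\dots,1)=\gamma_n$, one finds $\beta_j=\gamma_j$ for $1\leq j\leq n-1$, while the seam forces $\beta_n=\delta+\gamma_1$. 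Each of these is of the special form $t\delta+\gamma_j$ with $t\geq 0$ and $1\leq j\leq n-1$.

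Evaluating $c$ on these special vectors is where the affine seam enters, and this is the main obstacle. Computing $c$ as a linear map (using that $s_i$ alters only the $i$th coordinate, and that cyclically $1$ and $n$ are adjacent) gives $c(\delta)=\delta$ and $c(\gamma_j)=\delta+\gamma_{j+1}$ for $1\leq j\leq n-2$, but $c(\gamma_{n-1})=2\delta+\gamma_1$: crossing the seam spawns an \emph{extra} copy of $\delta$, so there is no naive ``shift by one''. To control this I would introduce the invariant $V(t\delta+\gamma_j):=(n-1)t+j$ (for $t\geq 0$, $1\leq j\leq n-1$), which is a bijection from these special vectors onto the integers $\geq 1$. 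The seam computation shows that $c$ preserves this set of vectors and satisfies $V(c(v))=V(v)+n$ \emph{in every case} — the extra $\delta$ at the seam being precisely what keeps the increment constant (and $c(\delta)=\delta$ lets one pass from $\gamma_j$ to general $t\delta+\gamma_j$).

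Assembling these finishes the argument. Since $V(\beta_t)=t$ for $1\leq t\leq n$ by the base cases, the recursion $\beta_t=c(\beta_{t-n})$ together with $V\circ c=V+n$ gives, by induction, $V(\beta_t)=t$ for all $t\geq 1$, and in particular each $\beta_t$ remains of the special form. Thus $\beta_{L+1}$ is the unique special vector with $V=L+1=(n-1)m+a$; since $V(m\delta+\gamma_a)=(n-1)m+a$ and $1\leq a\leq n-1$ (because $b\geq 1$), injectivity of $V$ yields $\beta_{L+1}=m\delta+\gamma_a=((m+1)^a,m^b)=\alpha$, i.e.\ $R(F(\alpha))=\alpha$. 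The only delicate input throughout is the seam identity $c(\gamma_{n-1})=2\delta+\gamma_1$ and its bookkeeping via $V$; everything else is telescoping and periodicity.
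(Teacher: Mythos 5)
Your proof is correct, and while it rests on the same two underlying facts as the paper's (the periodicity of the index word $1,2,\dots,n,1,2,\dots$ and the identity $s_i(\lambda\delta+\mathbf{v})=\lambda\delta+s_i(\mathbf{v})$, where $\delta=(1,\dots,1)$), the key step is organized quite differently. The paper inducts on $m$ and relates $\mathcal{S}_{m+1}$ to $\mathcal{S}_m$ by a word-rewriting trick: it factors $\mathcal{S}_{m+1}=\mathcal{R}_m\mathcal{H}$, observes that the tail $\mathcal{H}$ is itself an expression for $\delta-\alpha_{j+1}$ while $\mathcal{R}_m(-\alpha_{j+1})=\mathcal{S}_m$, and concludes $\mathcal{S}_{m+1}=\delta+\mathcal{S}_m$ without ever computing the Coxeter element. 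You instead track every intermediate root $\beta_t$ along the periodic word, reduce to the recursion $\beta_{t+n}=c(\beta_t)$, and compute the action of $c=s_1\cdots s_n$ on the roots $t\delta+\gamma_j$ explicitly — I checked the seam identity $c(\gamma_{n-1})=2\delta+\gamma_1$ and the generic case $c(\gamma_j)=\delta+\gamma_{j+1}$, as well as the base cases $\beta_j=\gamma_j$ ($j\le n-1$), $\beta_n=\delta+\gamma_1$, and all are right; the invariant $V(t\delta+\gamma_j)=(n-1)t+j$ with $V\circ c=V+n$ then cleanly absorbs the irregularity at the seam. What your route buys is more information (it identifies \emph{every} prefix root $\beta_t$, not just the ones of the form $\mathcal{S}_m$) and a very mechanical verification; what the paper's route buys is brevity, since the tail-word observation sidesteps the case analysis for the Coxeter element entirely. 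Both are valid; yours is a legitimately different and complete argument.
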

  
\begin{proof}  
Fix $n$ and $a$. We will prove by induction on $m$ that $\mathcal{S}_m$ is an expression for $\alpha$.  First of all, when $m=0$, step (1) in forming $\mathcal{S}_m$ creates the root $\sum_{k=1}^{a} \alpha_k$. The remaining steps make no change to that root, so $\mathcal{S}_m$ is an expression for $((m+1)^a,m^b)=(1^a,0^b)$, as desired. Now suppose that $\mathcal{S}_m$ is an expression for $((m+1)^a,m^b)$ for some arbitrary but fixed $m$. We want to show that $\mathcal{S}_{m+1}$ expresses $((m+2)^a,(m+1)^b)$.
 
 Observe that $\mathcal{S}_{m+1}=(s_1\cdots s_n) \mathcal{S}'_m$, where $\mathcal{S}'_m$ is the same sequence as $\mathcal{S}_m$ but with the rightmost term removed. But $\mathcal{S}'_m = (s_1 \cdots s_n)^d  (*) \alpha_j$ for some $d \geq 0$ and $1 \leq j \leq n$, and where $(*)$ represents the sequence of $j-1$ simple reflections $s_{1}\cdots s_{j-1}$ (it is empty if $j=1$). It follows that 

\[\mathcal{S}_{m+1}=(s_1 \cdots s_n)^{d+1}(*)\alpha_j=(s_1 \cdots s_n)^d (s_1 \cdots s_j \cdots s_n) (*) \alpha_j = (s_1 \cdots s_n)^d (*) (s_j \cdots s_n) (*) \alpha_j .\]

Note that the indices of $(s_{j}\cdots s_n)(*)$ form a sequence of $n$ consecutive integers modulo $n$, starting with $j$ (which may be $n$, in which case $s_{j}\cdots s_n$ is the single reflection $s_n$). Let $\mathcal{H}$ represent $(s_{j}\cdots s_n)(*)$ with the two leftmost terms removed and $\alpha_j$ added onto the right; the indices of $\mathcal{H}$ are the $n-1$ consecutive integers starting with $j+2$. Then

\begin{align*}
\mathcal{S}_{m+1}&= (s_1 \cdots s_n)^d (*)( s_j \cdots s_n) (*) \alpha_j \\
&= \mathcal{R}'_m (s_{j+1}\cdots s_n)(*)\alpha_j\\
&= \mathcal{R}_m \mathcal{H}.
\end{align*}

In words, $\mathcal{S}_{m+1}$ is $\mathcal{R}_m$ with the next $n-1$ terms added onto the right, as Example \ref{sequence} illustrates.  
 
 Now note that $\mathcal{S}_m=\mathcal{R}'_m \alpha_{j+1}$ is the same as $\mathcal{R}'_m s_{j+1}(-\alpha_{j+1})=\mathcal{R}_m(-\alpha_{j+1})$. It follows from our characterization of the simple reflections in the proof of Proposition \ref{realroot} that if $\mathcal{R}$ is any reflection and $\mathbf{v}$ any real root such that $\mathcal{R}(\mathbf{v}) = \mathbf{w}$, then $\mathcal{R}(\mathbf{\lambda} + \mathbf{v})=\mathbf{\lambda} + \mathbf{w}$, for $\lambda=(\lambda,\dots,\lambda)$ and $\lambda \in \mathbb{Z}$. 
$\mathcal{H}$ is an expression for the root $(1,\dots,1)-\alpha_{j+1}$.  Therefore, $\mathcal{R}_m (\mathcal{H})=(1,\dots,1)+\mathcal{R}_m (-\alpha_{j+1})=(1,\dots,1)+ \mathcal{S}_m$,  which is $(1,\dots,1)+((m+1)^a,m^b)$ by hypothesis. Hence $\mathcal{S}_{m+1}=((m+2)^a,(m+1)^b)$, as desired. It follows by induction that $\mathcal{S}_m=S(F(\alpha))$ expresses $\alpha=((m+1)^a,m^b)$ for all choices of $n,a,m$. 
\end{proof} 

Now, to obtain $\gamma_\alpha^{\text{op}}$, perform the steps:
\begin{enumerate}
\item Start at $M_n$ and, going counter-clockwise around the inner boundary, intersect $l_n$ in $m$ points, not including $M_a$.
\item End at $M_{a+1}$. (If $a+1=n$, end at $M_1$.)
\end{enumerate}
We can describe $\gamma_\alpha^{\text{op}}$ in terms of spirals. Suppose $m>0$ and pick $w_{n}$ in $l_{n}-\{M_1, M_n\}$. Let $H(M_n,w_n)$ be the curve segment with endpoints $M_n$ and $w_n$ which goes counter-clockwise around $(\mathcal{S},\mathcal{M})^{\text{op}}$'s inner boundary once. Let $\text{Sp}_m(w_n,M_{a+1})$ be the curve segment with endpoints $w_n$ and $M_{a+1}$ which, going counter-clockwise, intersects $l_{a+1}$ $m$ times (the final intersection being $M_{a+1}$). Then $\gamma_\alpha^{\text{op}}=H(M_n,w_n) \cup \text{Sp}_m(w_n,M_{a+1})$. If $\beta=\alpha +(\lambda,\dots,\lambda)$ for $\lambda \geq 0$, then $\gamma_\beta^{\text{op}}=H(M_n,q_n) \cup \text{Sp}_{m+\lambda}(q_n,M_{a+1})$ for some $q_n \in l_{n}-\{M_1, M_n\}$ which can be chosen strictly between $w_n$ and $M_n$. In case $m=0$, then $\gamma_\alpha^{\text{op}}=H(M_n,M_{a+1})\cup \text{Sp}_m(M_{a+1},M_{a+1})=H(M_n,M_{a+1})$.
 
\begin{Exa}\label{Schur} Let $\alpha=322$. Then $\gamma_\alpha^{\text{op}}= H(M_3,w_3)\cup \text{Sp}_2(w_3,M_2)$.
\begin{center}
\begin{tikzpicture}[scale=1]
\draw (0,0) circle (3cm) 
   	circle (.5cm);
   	\draw[dotted] (0cm,-.5cm) -- (250:3cm)  ;
   	\draw[dotted] (0cm,-.5cm) -- (290:3cm) ;
   	\draw [dotted] plot [smooth, tension=0.8] coordinates { (270:.5) (310:1.6) (360:2.2) (60:2.4) (120:2.4) (185:2.4)(250:3)}; 	 
 	\draw [DarkRed,densely dashed, line width=0.5mm] plot [smooth, tension=0.7] coordinates { (270:.5) (320: .7) (360:.9) (50:1.) (100:1.) (150:1.) (200:1.) (252:1.)(295:1.1)};
 	\draw [DarkRed, line width=0.5mm] plot [smooth, tension=0.6] coordinates {(295:1.1)(330:1.2)(360:1.3) (25:1.4) (50:1.5)(70:1.5)(90:1.5)(110:1.5)(130:1.5)(150:1.5)(170:1.5)(190:1.5) (220:1.5) (250:1.5)(290:1.6) (330:1.7)(360:1.8)(25:1.9)(50:2)(70:2)(90:2)(110:2)(130:2)(150:2)(170:2)(190:2)(220:2)(250:2.2)(290:3)}; 
  
   	\draw[fill] (0cm,-.5cm) circle(1pt) node[above] {$M_3$};
   	\draw[fill] (250:3) circle(1pt) circle(1pt) node[below] {$M_1$};
   	\draw[fill] (290:3) circle(1pt) circle(1pt) node[below] {$M_2$};	
\draw[fill] (295:1.1) circle(1pt) node[below] {$w_3$}; 
	\draw [DarkRed, line width=.5mm] (-5,-.5) -- (-3.5,-.5);   
    \node at (-4.25,-.3) {$\text{Sp}_2(w_3,M_2)$} ;
     \draw [DarkRed,densely dashed, line width=.5mm] (-5,0) -- (-3.5,0);  
     \node at (-4.25,.2) {$H(M_3,w_3)$} ; 
      
\end{tikzpicture}
\end{center}
\end{Exa}

We also give a description of the planar curves in terms of spirals. Let $a$, $1 \leq a < n$, be given. Let $E$ denote the curve which starts at $p_a$, and, going counter-clockwise, first intersects the rays $r_x$ for $1 \leq x <a$ one time and then the rays $r_x$ for $1 \leq x \leq n$ $m>0$ times. If $q$ and $w$ are points on $E$, such that $q$ is before (according to the orientation of $E$) $w$, denote by $E(q,w)$ the segment of $E$ between $q$ and $w$, inclusive. If $q=w$, $E(q,w)$ will symbolize the point $q$.  In particular, if $q$ and $w$ are two distinct intersections of $E$ with $r_1$, where $q$ possibly equals $p_1$, then $E(q,w)$ shall be called a spiral of $E$. 

If $w_{1}$ is the initial point of the first spiral of $E$ (the first intersection of $E$ with $r_1$, possibly equal to $p_1$) and $w_{z}$ is the terminal point of the last spiral of $E$, then the set $\{\text{union of all the spirals of $E$}\} \cup E(w_{z},B) $ shall be denoted by $\text{Sp}_m(w_{1},B)$. The particular segment $E(p_a,w_{1}) $ shall be denoted instead by $H(p_a,w_{1})$. Hence $E=H(p_a,w_1)\cup \text{Sp}_m(w_1,B)$. Note that if $a=1$ then $H(p_a,w_1)=H(p_1,p_1)$ should be understood as the point $p_1$.

 In case $m=0$, denote $E=H(p_a,B)\cup \text{Sp}_0(B,B)=H(p_a,B)$. As usual, interpret the degenerate spiral $\text{Sp}_0(B,B)$ as the point $B$. We still denote the first intersection of $E$ with $r_1$ by $w_1$. 
 
 Now let $0\leq \lambda$ and $1 \leq b <n$ be given. Suppose $F$ is a curve which starts at $p_b$, and, going counter-clockwise, first intersects one time the rays $r_x$ for $1 \leq x <b$ and then intersects $m+\lambda$ times the rays $r_x$ for $1 \leq x \leq n$. If $m>0$, then $F=H(p_b,q_1)\cup \text{Sp}_{m+\lambda}(q_1,B)$, where $q_1$ denotes the first intersection of $F$ with $r_1$. If $m=\lambda=0$, then denote $F=H(p_b,B)$, but still denote the first intersection between $F$ and $r_1$ by $q_1$.

The natural way to draw $E \cup F$ in $\Gamma$ depends on the values of $a$, $b$, $m$, and $\lambda$. First suppose $1\leq b \leq a$. Then, if either $m$ or $\lambda$ is larger than zero, $F$ intersects $r_1$ at least twice. Denote the second point of intersection by $q_2$. Draw $E$ and $F$ in such a way that  $\text{Im}(q_1)\leq \text{Im}(w_1) < \text{Im}(q_2)$, and such that $r_1$ has no other intersections with either $E$ or $F$ between $q_1$ and $q_2$.  In case $\lambda=m=0$, there does not exist a $q_2$, but still draw $\text{Im}(q_1)\leq \text{Im}(w_1)$.  $\text{Im}(q_1)=\text{Im}(w_1)=\text{Im}(p_1)$ if and only if $a=b=1$ (see Example \ref{schurint}). Now suppose $1\leq a <b$. If $m>0$ then $E$ intersects $r_1$ in at least two points $w_1$ and $w_2$. Draw $E \cup F$ in such a way that $\text{Im}(w_1)<\text{Im}(q_1)<\text{Im}(w_2)$ and such that $r_1$ contains no other intersections with either $E$ or $F$ between $w_1$ and $w_2$. If $m=0$ then let $\text{Im}(w_1)<\text{Im}(q_1)<\text{Im}(q_2)$, if $q_2$ exists. In all cases, drawing $E \cup F$ in the natural way coincides with choosing representatives of the respective homotopy classes of $E$ and $F$ which have the fewest pairwise intersections, so we will always assume it is done.

\begin{Exa}\label{schurint} Left picture: $E=H(p_2, B)$ and $F=H(p_2, q_1) \cup \text{Sp}_2(q_1,B)$. Right picture: $E=H(p_1,B)$ and $F=\text{Sp}_2(p_1,B)$.
\begin{center}
\begin{tikzpicture}[scale=1.2]   
\fill[Linen] (-1,0) rectangle (5,3);

\draw[-] (-1,0) -- (5,0);
\draw[densely dotted] (0,1.5) -- (0,3) node[below left, scale=.8]{$r_1$}; 
\draw[densely dotted] (2,1.5) -- (2,3) node[below left, scale=.8]{$r_2$}; 
\draw[densely dotted] (4,1.5) -- (4,3) node[below left, scale=.8]{$r_3$}; 
\draw [DarkRed,dash pattern = on 3mm off .5mm, line width=0.5mm] plot [smooth, tension=0.6] coordinates { (2,1.5)(1,1.7)(.5,1.75) (0,1.75) (-.3,1.65)(-.4,1.5) (-.4,1.3)(0,1) (2,0)}; 
\draw [DarkRed,densely dashed, line width=0.5mm] plot [smooth, tension=0.6] coordinates {(2,1.5) (1,1.6) (0,1.65)};
\draw [DarkRed, line width=0.5mm] plot [smooth, tension=0.6] coordinates { (0,1.65)(-.25,1.6)(-.3,1.4)(0,1.25)(2,1.1) (3.5,1.1)(4,1.2)  (4.2,1.5) (4,1.7)(3,1.9)(2,2) (0,1.9) (-.5,1.3) (1,1) (3,.9)(4,1) (4.4,1.4) (4,1.9) (2,2.2)  (0,2.1) (-.7,1.6) (-.4,1) (2,0)};  
\draw[fill] (0,1.5) circle(1 pt) node[below, scale=.8] {$p_1$}; 
\draw[fill] (2,1.5) circle(1 pt) node[below, scale=.8] {$p_2$};
\draw[fill] (4,1.5) circle(1 pt) node[below, scale=.8] {$p_3$};
\draw[fill] (2,0) circle(1 pt) node[below, scale=.8] {$B$};  
\end{tikzpicture}
\hspace{1mm}
\begin{tikzpicture}[scale=1.2] 
\fill[Linen] (-1,0) rectangle (5,3);
\draw[-] (-1,0) -- (5,0);
\draw[densely dotted] (0,1.5) -- (0,3) node[below left, scale=.8]{$r_1$}; 
\draw[densely dotted] (2,1.5) -- (2,3) node[below left, scale=.8]{$r_2$}; 
\draw[densely dotted] (4,1.5) -- (4,3) node[below left, scale=.8]{$r_3$}; 
\draw [DarkRed,dash pattern = on 3mm off .5mm, line width=0.5mm] plot [smooth, tension=0.6] coordinates { (0,1.5) (2,0)}; 
\draw [DarkRed, line width=0.5mm] plot [smooth, tension=0.6] coordinates {  (0,1.5)(2,1.1) (3.5,1.1)(4,1.2)  (4.2,1.5) (4,1.7)(3,1.9)(2,2) (0,1.9) (-.5,1.3) (1,1) (3,.9)(4,1) (4.4,1.4) (4,1.9) (2,2.2)  (0,2.1) (-.7,1.6) (-.4,1) (2,0)}; 
\draw[fill] (0,1.5) circle(1 pt) node[below, scale=.8] {$p_1$}; 
\draw[fill] (2,1.5) circle(1 pt) node[below, scale=.8] {$p_2$};
\draw[fill] (4,1.5) circle(1 pt) node[below, scale=.8] {$p_3$};
\draw[fill] (2,0) circle(1 pt) node[below, scale=.8] {$B$}; 
\end{tikzpicture}
\end{center} 
\end{Exa}

The next lemma shows that the intersections between two curves $E$ and $F$ result from one curve having more spirals than the other.

\begin{Lem}\label{lemma}
Let $1 \leq \lambda$ be given. Suppose $E=H(p_{a},w_1) \cup \text{Sp}_m(w_1, B)$ and  $F=H(p_{b},q_1) \cup \text{Sp}_{m+\lambda} (q_1, B)$ for $0\leq m$. Then $\text{Int}(E,F)=\lambda-1$ if $b \leq a$ and $\text{Int}(E,F)=\lambda$ if $b >a$.

\end{Lem}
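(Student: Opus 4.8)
The plan is to place $E$ and $F$ in minimal position and count crossings directly, using the spiral decompositions $E = H(p_a,w_1)\cup \text{Sp}_m(w_1,B)$ and $F = H(p_b,q_1)\cup \text{Sp}_{m+\lambda}(q_1,B)$ together with the prescribed drawing of $E\cup F$ described before the lemma.

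First I would eliminate the dependence on $m$. In the natural drawing the $m$ innermost loops of $\text{Sp}_m(w_1,B)$ and of $\text{Sp}_{m+\lambda}(q_1,B)$ can be paired off as $m$ mutually parallel nested loops contributing no pairwise intersection; this uses precisely the vertical ordering of $w_1,w_2,q_1,q_2$ on $r_1$ imposed before the lemma. Stripping them away simultaneously lowers both winding counts by $m$ while fixing the starting rays $r_a,r_b$ and the common terminal point $B$, so $\text{Int}(E,F)$ is unchanged and I may argue as if $m=0$. Then $E=H(p_a,B)$ is an embedded arc from $p_a$ to the boundary point $B$, and $F=H(p_b,q_1)\cup \text{Sp}_\lambda(q_1,B)$ is a head followed by $\lambda$ loops, each of which encircles all of $p_1,\dots,p_n$ once. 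Since $p_a$ lies inside every such loop while $B$ lies outside every such loop, the arc $E$ must cross each of the $\lambda$ loops, and in the natural drawing it does so exactly once per loop; this produces a baseline of $\lambda$ transverse crossings, which I would record by an explicit bijection with the loops of $F$, in the style of the matchings $y\mapsto y'$ and $z\mapsto z'$ in the proofs of Theorems \ref{type12} and \ref{type22}.

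It then remains to analyze the interaction of the two heads near the shared endpoint $B$, and this is where the case distinction enters and where the real work lies. The heads record the initial partial winding: $E$ crosses $r_1,\dots,r_{a-1}$ and the head of $F$ crosses $r_1,\dots,r_{b-1}$. When $b>a$ the head of $F$ overtakes that of $E$, none of the $\lambda$ crossings can be removed, and $\text{Int}(E,F)=\lambda$. When $b\le a$ the outermost loop of $F$ together with its head and the arc $E$ cobounds a bigon abutting the common endpoint $B$ and containing none of the points $p_i$; homotoping across it cancels one crossing and yields $\text{Int}(E,F)=\lambda-1$. The main obstacle is making this last step rigorous: one must verify that exactly one crossing is removable precisely when $b\le a$, which hinges both on the shared endpoint $B$ (only a shared endpoint permits the absorption) and on the sign of $b-a$ (which decides whether the relevant region is an empty bigon). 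I would handle it by tracking the cyclic order in which $E$ and $F$ meet $r_1$ under the prescribed drawing and exhibiting the single removable crossing explicitly, rather than invoking a bigon criterion in the surface, where one would separately have to check that no bigon contains any $p_i$.
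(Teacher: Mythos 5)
Your overall strategy (strip off $m$ parallel loops to reduce to $m=0$, then count how many of the remaining $\lambda$ loops of $F$ the arc $E$ is forced to cross) is close in spirit to the paper's proof, but the central counting claim is false in the case $b\le a$, and this makes the argument internally inconsistent. You assert that after the reduction ``the arc $E$ must cross each of the $\lambda$ loops'' because $p_a$ lies inside each loop and $B$ outside; if that were true it would give the lower bound $\text{Int}(E,F)\ge\lambda$ in \emph{both} cases, contradicting the value $\lambda-1$ you then derive for $b\le a$. The claim fails for the first (innermost) loop of $F$ when $b\le a$: a loop such as $F(q_1,q_2)$ is an open arc, so the enclosure argument only applies after closing it up along the segment of $r_1$ from $q_1$ to $q_2$. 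The drawing convention for $b\le a$ puts $\text{Im}(q_1)\le\text{Im}(w_1)<\text{Im}(q_2)$, so this closing segment contains $w_1$, the unique point of $E\cap r_1$ once $m=0$; the parity argument then only shows that $E$ crosses the first loop an \emph{even} number of times, i.e.\ zero in minimal position. (For the loops $F(q_j,q_{j+1})$ with $j\ge 2$, and for every loop when $b>a$, the closing segment misses $w_1$ and one does get one forced crossing per loop.) This is exactly where the case distinction lives, and it is what the paper's proof records by observing that $F(q_1,q_2)\cap E=\emptyset$ when $b\le a$, so that only $\lambda-1$ loops remain to be crossed by $E$ on its way to $B$.

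Consequently the cancellation mechanism you propose is misplaced. There is no bigon at $B$: in minimal position the two final descents to $B$ are disjoint in both cases, and ``the outermost loop of $F$ together with its head'' is not a configuration near $B$ at all --- the head $H(p_b,q_1)$ is attached to the \emph{innermost} loop, at the opposite end of the curve from $B$. The saving of one crossing when $b\le a$ occurs at the start of the curves, at the innermost loop of $F$, and is governed entirely by whether $w_1$ lies below $q_1$ (case $b>a$) or between $q_1$ and $q_2$ (case $b\le a$) on $r_1$. Your closing remark that one should ``track the cyclic order in which $E$ and $F$ meet $r_1$'' is the right instinct and is essentially what the paper's proof does, but carrying it out replaces, rather than completes, the bigon-at-$B$ step; as written, the proposal both proves too much (a lower bound of $\lambda$ in all cases) and locates the $-1$ in a region of the picture where nothing changes between the two cases.
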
  

\begin{proof} 

Let $b \leq a$. 
Suppose the first spiral of $F$ is $F(q_1,q_2)$ for two points $q_1,q_2 \in r_1 \cap F$. Because of the way we have chosen to draw $E$ and $F$,  $F(q_1,q_2) \cap E = \emptyset$. After the point $q_2$, $F$ has $m+\lambda-1$ more spirals. After $w_1$, $E$ has $m$ more spirals. Let $q_{m+2}$ be the point which concludes the $(m+1)$th spiral of $F$ and let $w_{m+1}$ be the point which concludes the $m$th spiral of $E$. The segments $F(q_2, q_{m+2})$ and $E(w_1,w_{m+1})$ can be drawn parallel to one another and so $F(q_2, q_{m+2}) \cap E(w_1,w_{m+1}) = \emptyset$. (Note that if $m=0$, $q_{m+2}=q_2$ and $w_{m+1}=w_1$, so these two segments are points.) Since $E$ has no more spirals after this, it must cross the remainder of $F$ to reach $B$. The number of times it crosses is equal to the number of spirals $F$ still has left, $\lambda-1$. It is clear that this accounts for all intersections which cannot be removed by drawing the curves in another way, so $\text{Int}(E,F) =\lambda-1$. 

Now let $b>a$. The segments $F(q_1, q_{m+1})$ and $E(w_1,w_{m+1})$ (the first $m$ spirals of each curve) can be drawn parallel to each other and therefore have no pairwise intersections. After this, $F$ has $\lambda$ more spirals and $E$ has none. So, in order to reach $B$, $E$ must cross $F$ $\lambda$ different times. Hence $\text{Int}(E,F) =\lambda$. 
\end{proof}

\begin{Thm}[\ref{mainthm}(2)] Let $\alpha$ and $\beta$ be given such that $\alpha=((m+1)^a,m^b)$ and $\beta-\alpha \in \{(\lambda,\dots,\lambda) | 0\leq \lambda < n\}$. Then $\text{Int}(F(\alpha),F(\beta))=\text{Int}(\gamma_\alpha^{\text{op}}, \gamma_\beta^{\text{op}})$.
\end{Thm}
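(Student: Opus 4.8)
The plan is to extract both $\text{Int}(F(\alpha),F(\beta))$ and $\text{Int}(\gamma_\alpha^{\text{op}},\gamma_\beta^{\text{op}})$ from the spiral descriptions and to check that they agree. Write $\beta=\alpha+(\lambda,\dots,\lambda)$ with $0\le\lambda<n$. First I would dispose of the degenerate cases. If $\lambda=0$ then $\beta=\alpha$, and one checks directly that the Schur curves $F(\alpha)$ and $\gamma_\alpha^{\text{op}}$ have no self-intersections, so $\text{Int}(F(\alpha),F(\alpha))=\text{Int}(\gamma_\alpha^{\text{op}},\gamma_\alpha^{\text{op}})=0$. If $m=0$ the spiral parts collapse to points and both numbers are read off by inspection. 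The substantive case is therefore $m>0$ and $1\le\lambda<n$.

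For the planar curves I would invoke Lemma \ref{lemma}. The subtlety, and the step I expect to be the main obstacle, is that the Lemma concerns the curves $E,F$ assembled from steps (1)--(2), that is, the curves \emph{before} the leftward shift of step (3), whereas $F(\alpha)$ and $F(\beta)$ are the shifted curves. Since $\alpha$ and $\beta$ share the block-length $a$, the unshifted curves $\widehat{F(\alpha)}$ and $\widehat{F(\beta)}$ both start at $p_a$, and the Lemma (with $b=a$) gives $\text{Int}(\widehat{F(\alpha)},\widehat{F(\beta)})=\lambda-1$. I would then analyse how the shift, which slides the two initial points leftward by the different amounts $m$ and $m+\lambda$ to $p_{(a-m)\bmod n}$ and $p_{(a-m-\lambda)\bmod n}$, changes the count. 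Sliding an initial point back along the spiral-free initial arc of one curve neither creates nor removes crossings with the other, so the number is unchanged unless the sliding drags an initial point across the marked point $p_n$, i.e. across one full winding, in which case a single extra crossing is forced. The hypothesis $\lambda<n$ is what keeps this clean: the two shifted indices differ by exactly $\lambda$ cyclically, so at most one winding boundary is crossed, and whether it is crossed --- equivalently whether $\lambda>(a-m)\bmod n$ --- toggles the planar answer between $\lambda-1$ and $\lambda$.

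The annulus curves, by contrast, carry no analogue of step (3): $\gamma_\alpha^{\text{op}}$ and $\gamma_\beta^{\text{op}}$ run directly from $M_n$ to $M_{a+1}$ with $m$ and $m+\lambda$ windings about the inner boundary. Drawing them so that their first $m$ windings coincide and $q_n$ lies between $w_n$ and $M_n$, I would count, exactly as in the proofs of Theorems \ref{type12} and \ref{type22}, the unavoidable crossings produced when $\gamma_\alpha^{\text{op}}$ cuts across the $\lambda$ surplus windings of $\gamma_\beta^{\text{op}}$ on its way to the shared endpoint $M_{a+1}$; this again returns $\lambda-1$ or $\lambda$, the branch being fixed by the cyclic position of $M_{a+1}$ relative to those windings. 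The final and most delicate task is to match the two case-splits: to verify that the winding-crossing condition governing the planar shift selects the same branch as the endpoint-position condition on the annulus, so that $\text{Int}(F(\alpha),F(\beta))=\text{Int}(\gamma_\alpha^{\text{op}},\gamma_\beta^{\text{op}})$ in every instance. Pushing this correspondence through the modular bookkeeping, and checking the boundary behaviour at $p_n$, at $M_{a+1}$ when $a+1=n$, and at the degenerate spirals, is where the argument will require the most care.
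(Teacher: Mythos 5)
Your skeleton---compute the unshifted planar count with Lemma \ref{lemma}, track the effect of the step-(3) shift, then compare with the annulus---is the same as the paper's, but the content of the two middle steps is wrong, and the error manufactures a final ``matching of case-splits'' that does not exist. When the leftward shift of the initial point of $\widehat{F(\beta)}'$ from $p_{a-m}$ to $p_{a-m-\lambda}$ wraps around, it does not merely ``force a single extra crossing'': it also removes one spiral from the curve, lowering the spiral difference from $\lambda$ to $\lambda-1$, and it simultaneously places the initial point of $F(\beta)$ to the right of that of $F(\alpha)$, which switches you from the $b\le a$ branch of Lemma \ref{lemma} (intersection number $=$ spiral difference $-\,1$) to the $b>a$ branch (intersection number $=$ spiral difference). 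These two effects cancel exactly, so $\mathrm{Int}(F(\alpha),F(\beta))=\lambda-1$ in every case; there is no toggle to $\lambda$, and the hypothesis $\lambda<n$ serves only to ensure at most one spiral is lost. Likewise the annulus count does not branch: $\gamma_\alpha^{\mathrm{op}}$ and $\gamma_\beta^{\mathrm{op}}$ share both endpoints $M_n$ and $M_{a+1}$ and differ in winding by $\lambda$, so their minimal intersection number is $\lambda-1$ regardless of where $M_{a+1}$ sits. Your proposal defers the crux of the argument to reconciling two dichotomies, both of which are illusory, and it never actually pins down the annulus number.

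The paper sidesteps the direct annulus computation entirely: it deforms $\widehat{F(\alpha)}\cup\widehat{F(\beta)}$ (the \emph{unshifted} curves, which share initial point $p_a$ and terminal point $B$) onto $\gamma_\alpha^{\mathrm{op}}\cup\gamma_\beta^{\mathrm{op}}$ via the identifications $p_a\sim M_n$ and $B\sim M_{a+1}$, preserving intersections, so that $\mathrm{Int}(\gamma_\alpha^{\mathrm{op}},\gamma_\beta^{\mathrm{op}})=\mathrm{Int}(\widehat{F(\alpha)},\widehat{F(\beta)})=\lambda-1$ by Lemma \ref{lemma} with no case analysis. The remaining work is precisely the shift bookkeeping, carried out in two stages (shift both initial points by $m$, then shift that of $F(\beta)$ by the further $\lambda$), with the spiral loss and the change of branch in Lemma \ref{lemma} recorded together. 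If you repair your shift analysis to account for the lost spiral and replace the direct annulus count by the deformation (or by a correct direct count giving the constant value $\lambda-1$), your outline becomes the paper's proof.
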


\begin{proof}
Let $0\leq \lambda <n$ be given. Then $\widehat{F(\alpha)}=H(p_a,w_1) \cup \text{Sp}_m(w_1,B)$ and $\widehat{F(\beta)}=H(p_a, q_1) \cup \text{Sp}_{m+\lambda}(q_1,B)$ (where we understand that some terms could be degenerate).  There is a deformation of $\widehat{F(\alpha)}\cup \widehat{F(\beta)}$ into $\gamma_\alpha^{\text{op}} \cup \gamma_\beta^{\text{op}}$ which preserves intersections. Identify the common initial point $p_a$ of $\widehat{F(\alpha)}$ and $\widehat{F(\beta)}$ with $M_n$ and the common terminal point $B$ with $M_{a+1}$ (if $a+1=n$, then with $M_1$). $M_n$, is, of course, the common initial point of $\gamma_\alpha^{\text{op}}$ and $\gamma_\beta^{\text{op}}$, and $M_{a+1}$ is their common terminal point. This results in the following identifications (see Example \ref{deformschur}): $H(p_a,w_1)$ with $H(M_n,w_n)$ and $\text{Sp}_m(w_1,B)$ with $\text{Sp}_m(w_n,M_{a+1})$. Hence $\widehat{F(\alpha)}$ is deformed into $\gamma_\alpha^{\text{op}}$. The situation is similar for $\widehat{F(\beta)}$. Because no intersections are introduced nor removed throughout this process, we conclude that $\text{Int}(\widehat{F(\alpha)},\widehat{F(\beta)})=\text{Int}(\gamma_\alpha^{\text{op}},\gamma_\beta^{\text{op}})$.
  
We now wish to show that $\text{Int}(\widehat{F(\alpha)},\widehat{F(\beta)})=\text{Int}(F(\alpha),F(\beta))$. If $\lambda=0$, then $\widehat{F(\alpha)}$ and $\widehat{F(\beta)}$ are homotopic curves with initial point $p_a$ and $F(\alpha)$ and $F(\beta)$ are homotopic curves with inital point $p_{a-m}$. Clearly then $\text{Int}(\widehat{F(\alpha)},\widehat{F(\beta)})=0=\text{Int}(F(\alpha),F(\beta))$.

Let $\lambda>0$. We will proceed in two steps. Denote by $\widehat{F(\beta)}'$ the result of shifting the initial point of $\widehat{F(\beta)}$ to the left $m$ places. Of course, $F(\alpha)$ is the result of shifting the initial point of $\widehat{F(\alpha)}$ to the left $m$ places. We first claim that $\text{Int}(\widehat{F(\alpha)},\widehat{F(\beta)})=\text{Int}(F(\alpha),\widehat{F(\beta)}')$. $\beta$ was chosen so that $\widehat{F(\beta)}$ has $\lambda$ more spirals than $\widehat{F(\alpha)}$, so Lemma \ref{lemma} implies that $\text{Int}(\widehat{F(\alpha)},\widehat{F(\beta)})=\lambda-1$.  Shifting the common endpoint of $\widehat{F(\alpha)}$ and $\widehat{F(\beta)}$ removes the same number of spirals from $\widehat{F(\alpha)}$ as it does from $\widehat{F(\beta)}$.   It follows that $\widehat{F(\beta)}'$ has $\lambda$ more spirals than $F(\alpha)$, so Lemma \ref{lemma} again implies that $\text{Int}(F(\alpha),\widehat{F(\beta)}')=\lambda-1$.  
 
The second claim is that $\text{Int}(F(\alpha),\widehat{F(\beta)}')=\text{Int}(F(\alpha),F(\beta))$, where, recall, $F(\beta)$ is the result of shifting the initial point of $\widehat{F(\beta)}'$ back $\lambda$ places from $p_{a-m}$ to $p_{a-m-\lambda}$.   If $a-m-\lambda<a-m$ (as always, the indices are mod $n$), then this operation removes no spirals from $\widehat{F(\beta)}'$ and hence the difference in spirals between $F(\alpha)$ and $F(\beta)$ remains the same as the difference between $F(\alpha)$ and $\widehat{F(\beta)}'$, $\lambda$. So Lemma \ref{lemma} implies that $\text{Int}(F(\alpha),F(\beta))=\text{Int}(F(\alpha),\widehat{F(\beta)}')=\lambda-1$. Here we have used the assumption that $\lambda<n$. For if $\lambda \geq n$, then it could happen that $a-m-\lambda<a-m$ and a positive number of spirals are removed from $\widehat{F(\beta)}'$.
If $a-m < a-m -\lambda$, then this operation removes exactly one spiral from  $\widehat{F(\beta)}'$. Hence the difference in spirals between $F(\beta)$ and $F(\alpha)$ is $\lambda-1$. Lemma \ref{lemma} then implies that $\text{Int} (F(\alpha),F(\beta))=\lambda-1$. Thus, in any case, $\text{Int} (F(\alpha),F(\beta))=\text{Int}(F(\alpha),\widehat{F(\beta)}')=\lambda-1$.
\end{proof}

\begin{Exa}\label{deformschur}$\widehat{F(\alpha)}\cup \widehat{F(\beta)}$ is deformed into $\gamma_\alpha^{\text{op}} \cup \gamma_\beta^{\text{op}}$ for $\alpha=111000$ and $\beta=333222$. Here, $a=3$, $m=0$, and $\lambda=2$.
 
\begin{center} 
\begin{tikzpicture}[scale=1.5]
\fill[Linen] (-1,0) rectangle (8.5,3.5);

\draw[-] (-1,0) -- (8.5,0);
\draw[densely dotted] (0,1.5) -- (0,3.5) node[below left, scale=.8]{$r_1$}; 
\draw[densely dotted] (1.5,1.5) -- (1.5,3.5) node[below left, scale=.8]{$r_2$}; 
\draw[densely dotted] (3,1.5) -- (3,3.5) node[below left, scale=.8]{$r_3$}; 
\draw[densely dotted] (4.5,1.5) -- (4.5,3.5) node[below left, scale=.8]{$r_4$}; 
\draw[densely dotted] (6,1.5) -- (6,3.5) node[below left, scale=.8]{$r_3$}; 
\draw[densely dotted] (7.5,1.5) -- (7.5,3.5) node[below left, scale=.8]{$r_4$}; 
\draw [DarkRed,dash pattern = on 3mm off .5mm, line width=0.5mm] plot [smooth, tension=0.45] coordinates { (3,1.5)(2,1.7)(1,1.75) (0,1.75)  (-.4,1.5) (0,1) (3.75,0)}; 
\draw [DarkRed,densely dashed, line width=0.5mm] plot [smooth, tension=0.6] coordinates {(3,1.5) (2,1.6) (0,1.65)};
\draw [DarkRed, line width=0.5mm] plot [smooth, tension=0.5] coordinates { (0,1.65)(-.2,1.6)(-.3,1.4)(0,1.25)(2,1.1) (4,1.1)(6.5,1.1)  (7.7,1.3)(7.8,1.65) (7.5,1.8)(6.2,1.9)(4,2)(2,2) (0,1.9) (-.5,1.6) (-.3,1.2) (1,1) (3,.9)(6,.9) (7.5,1) (8,1.3)(8,1.8)(7,2.1)(2,2.2)  (0,2.1) (-.8,1.5) (-.3,.9) (3.75,0)};  
\draw[fill] (0,1.5) circle(1 pt) node[below, scale=1] {$p_1$}; 
\draw[fill] (1.5,1.5) circle(1 pt) node[below, scale=.8] {$p_2$};
\draw[fill] (3,1.5) circle(1 pt) node[below, scale=.8] {$p_3$};
\draw[fill] (4.5,1.5) circle(1 pt) node[below, scale=.8] {$p_4$};
\draw[fill] (6,1.5) circle(1 pt) node[below, scale=.8] {$p_5$};
\draw[fill] (7.5,1.5) circle(1 pt) node[below, scale=.8] {$p_6$};
\draw[fill] (3.75,0) circle(1 pt) node[below, scale=1] {$B$}; 
  
\end{tikzpicture}  

\begin{tikzpicture}[scale=1.4]
\draw (0,0) circle (3cm) 
   	circle (.5cm);
   	\draw[dotted] (0cm,-.5cm) -- (250:3cm)  ;
    \draw[dotted] (0cm,-.5cm) -- (260:3cm) ;
    \draw[dotted] (0cm,-.5cm) -- (270:3cm) ;
    \draw[dotted] (0cm,-.5cm) -- (280:3cm) ;
   	\draw[dotted] (0cm,-.5cm) -- (290:3cm) ;
   	\draw [dotted] plot [smooth, tension=0.8] coordinates { (270:.5) (310:1.6) (360:2.2) (60:2.4) (120:2.4) (185:2.4)(250:3)}; 	 
    \draw [DarkRed,dash pattern = on 3mm off .5mm, line width=0.5mm] plot [smooth, tension=0.7] coordinates {(270:.5)(320:.8)(360:1.1)(50:1.2)(100:1.2)(150:1.2)(200:1.2)(252:1.6)(280:3)};
 	\draw [DarkRed,densely dashed, line width=0.5mm] plot [smooth, tension=0.7] coordinates { (270:.5) (320: .7) (360:.9) (50:1.) (100:1.) (150:1.) (200:1.) (252:1.)(295:1.1)};
 	\draw [DarkRed, line width=0.5mm] plot [smooth, tension=0.6] coordinates {(295:1.1)(330:1.2)(360:1.3) (25:1.4) (50:1.5)(70:1.5)(90:1.5)(110:1.5)(130:1.5)(150:1.5)(170:1.5)(190:1.5) (220:1.5) (250:1.5)(290:1.6) (330:1.7)(360:1.8)(25:1.9)(50:2)(70:2)(90:2)(110:2)(130:2)(150:2)(170:2)(190:2)(220:2)(250:2.2)(280:3)}; 
  
   	\draw[fill] (0cm,-.5cm) circle(1pt) node[above] {$M_6$};
   	\draw[fill] (250:3) circle(1pt) circle(1pt) node[below] {$M_1$};
    \draw[fill] (260:3) circle(1pt) circle(1pt) node[below] {$M_2$};
    \draw[fill] (270:3) circle(1pt) circle(1pt) node[below] {$M_3$};
    \draw[fill] (280:3) circle(1pt) circle(1pt) node[below] {$M_4$};
   	\draw[fill] (290:3) circle(1pt) circle(1pt) node[below] {$M_5$};

\end{tikzpicture}
\end{center}
\end{Exa}

\begin{Rmk}
We remark again that the deformation is highly dependent upon the values of $a$ and $b$. Suppose the deformation works for arbitrary Schur roots $\alpha$ and $\beta$ such that for $\alpha$, $a=a_0$, $b=b_0$, and for $\beta$, $a=a'$, $b=b'$. Since $p_{a_0} \sim M_n \sim p_{a'}$, $a_0=a'$. But since $c=0$ for Schur roots, this forces $b_0=b'$.
 \end{Rmk}
 
The only roots we have not yet considered are the Schur roots of the form
\[\alpha^{-1}=(m^b,(m+1)^a)\]
for some $\alpha=((m+1)^a,m^b)$ as in the beginning of this section. We define the plane curve $F(\alpha^{-1})$ for $\alpha^{-1}$ to be the geometric reflection of $F(\alpha)$ in the vertical line through $B$. Thus, $S(F(\alpha^{-1}))$ is $S(F(\alpha))$ but with the numbers $k$ and $(n+1)-k$, $1 \leq k \leq n$, interchanged in the indices. For instance, if $S(F(\alpha))$ is $s_1s_2s_3s_4s_1\alpha_2$, then $S(F(\alpha^{-1}))$ is $s_4s_3s_2s_1s_4\alpha_3$. This implies that the $((n+1)-k)$th component of $R(F(\alpha^{-1}))$ is the $k$th component of $R(F(\alpha))$, which precisely means that $R(F(\alpha^{-1}))=\alpha^{-1}$. 

On the other hand, the annulus curve $\gamma_{\alpha^{-1}}^{\text{op}}$ for $\alpha^{-1}$ starts at $M_n$, and, going clockwise around the inner boundary, intersects $l_1$ in $m$ points not including $M_n$; then it ends at $M_{b}$. 

Let $\beta$ be such that $\beta-\alpha \in \{(\lambda,\dots,\lambda) \thinspace | \thinspace 0\leq \lambda < n\}$. It is clear then that  $\text{Int}(F(\alpha), F(\beta))=\text{Int}(F(\alpha^{-1}), F(\beta^{-1}))$ and  that $\text{Int}(\gamma_\alpha^{\text{op}}, \gamma_\beta^{\text{op}})=\text{Int}(\gamma_{\alpha^{-1}}^{\text{op}},\gamma_{\beta^{-1}}^{\text{op}})$. It follows that $\text{Int}(F(\alpha^{-1}),F(\beta^{-1}))=\text{Int}(\gamma_{\alpha^{-1}}^{\text{op}},\gamma_{\beta^{-1}}^{\text{op}})$.


\begin{thebibliography}{10}

\bibitem{Amiot}
Claire Amiot.
\newblock Cluster categories for algebras of global dimension 2 and quivers
  with potential.
\newblock {\em Annales de l'Institut Fourier}, 59:2525--2590, May 2008.

\bibitem{article}
Ibrahim Assem, Thomas Brüstle, Gabrielle Charbonneau-Jodoin, and Pierre-Guy
  Plamondon.
\newblock Gentle algebras arising from surface triangulations.
\newblock {\em Algebra and Number Theory}, 4:201--229, March 2010.

\bibitem{BrustleZhang2011}
Thomas Br{\"u}stle and Jie Zhang.
\newblock On the cluster category of a marked surface without punctures.
\newblock {\em Algebra and Number Theory}, 5:529--566, 2011.

\bibitem{Schiffler1}
Philippe Caldero, Frederic Chapoton, and Ralf Schiffler.
\newblock Quivers with relations arising from clusters ({$A_n$ case}).
\newblock {\em Transactions of the American Mathematical Society},
  358(3):1347--1364, 2006.

\bibitem{CALDERO2006983}
Philippe Caldero and Bernhard Keller.
\newblock From triangulated categories to cluster algebras {II}.
\newblock {\em Annales Scientifiques de l’École Normale Supérieure},
  39(6):983--1009, 2006.

\bibitem{Felikson_2018}
Anna Felikson and Pavel Tumarkin.
\newblock Acyclic cluster algebras, reflection groups, and curves on a
  punctured disc.
\newblock {\em Advances in Mathematics}, 340:855–882, Dec 2018.

\bibitem{FST}
Sergey Fomin, Michael Shapiro, and Dylan Thurston.
\newblock Cluster algebras and triangulated surfaces. part {I}: Cluster
  complexes.
\newblock {\em Acta Mathematica}, 201:83--146, 2008.

\bibitem{He2020AGM}
Ping {He}, Yu~{Zhou}, and Bin {Zhu}.
\newblock {A geometric model for the module category of a skew-gentle algebra}.
\newblock {\em arXiv:2004.11136}, 2020.

\bibitem{Hong_2021}
Su~Ji Hong.
\newblock C-vectors and non-self-crossing curves for acyclic quivers of finite
  type.
\newblock {\em Symmetry, Integrability and Geometry: Methods and Applications},
  17, Feb 2021.

\bibitem{Kac1}
Victor~G. Kac.
\newblock Infinite root systems, representations of graphs and invariant
  theory, {I}.
\newblock {\em Inventiones mathematicae}, 56:57--92, 1980.

\bibitem{Kac2}
Victor~G. Kac.
\newblock Infinite root systems, representations of graphs and invariant
  theory, {II}.
\newblock {\em Journal of Algebra}, 78(1):141--162, 1982.

\bibitem{Kac}
Victor~G. Kac.
\newblock Root systems, representations of quivers and invariant theory.
\newblock In Francesco Gherardelli, editor, {\em Invariant Theory}, pages
  74--108, Berlin, Heidelberg, 1983. Springer Berlin Heidelberg.

\bibitem{Keller}
Bernhard Keller.
\newblock Deformed {Calabi-Yau} completions.
\newblock {\em Journal fur die Reine und Angewandte Mathematik}, 654:125--180,
  May 2011. With an appendix by Michel Van den Bergh.

\bibitem{labardini_potential}
Daniel Labardini-Fragoso.
\newblock Quivers with potentials associated to triangulated surfaces.
\newblock {\em Proc. London Math. Soc.}, 98:797--839, 2009.

\bibitem{labardini}
Daniel Labardini-Fragoso, Sibylle Schroll, and Yadira Valdivieso.
\newblock Derived categories of skew-gentle algebras and orbifolds.
\newblock {\em Glasgow Mathematical Journal}, 2022.

\bibitem{lee}
Kyu-Hwan Lee and Kyungyong Lee.
\newblock A correspondence between rigid modules over path algebras and simple
  curves on {Riemann} surfaces.
\newblock {\em Experimental Mathematics}, 30:315--331, 2021.

\bibitem{Nguyen}
Son~Dang Nguyen.
\newblock A proof of {Lee-Lee's} conjecture about geometry of rigid modules.
\newblock {\em arXiv:2102.11827}, 2021.

\bibitem{Schroll}
Sebastian {Opper}, Pierre-Guy {Plamondon}, and Sibylle {Schroll}.
\newblock {A geometric model for the derived category of gentle algebras}.
\newblock {\em arXiv:1801.09659}, 2018.

\bibitem{qiu_zhou_2017}
Yu~Qiu and Yu~Zhou.
\newblock Cluster categories for marked surfaces: punctured case.
\newblock {\em Compositio Mathematica}, 153(9):1779–1819, 2017.

\bibitem{Schiffler2}
Ralf Schiffler.
\newblock A geometric model for cluster categories of type {$D_n$}.
\newblock {\em Journal of Algebraic Combinatorics}, 27:1--21, 2008.

\bibitem{zhang2012cotorsion}
Jie Zhang, Yu~Zhou, and Bin Zhu.
\newblock Cotorsion pairs in the cluster category of a marked surface.
\newblock {\em Journal of Algebra}, 391:209--226, Oct 2013.

\end{thebibliography}
\end{document}